\newtheorem{theorem}{Theorem}[section]
\newtheorem{corollary}{Corollary}[section]
\newtheorem{remark}{Remark}[section]
\begin{document}

\title
{\bf  On the Work of Cartan and M\"{u}nzner on Isoparametric Hypersurfaces}
\author
{Thomas E. Cecil and Patrick J. Ryan}
\maketitle

\begin{abstract}
A hypersurface $M^n$ in a real space form ${\bf R}^{n+1}$, $S^{n+1}$, or $H^{n+1}$ is isoparametric if it has
constant principal curvatures.
This paper is a survey of the fundamental work of Cartan
and M\"{u}nzner  on the theory of isoparametric hypersurfaces in real space forms, in particular, spheres.
This work is contained in four papers of Cartan \cite{Car2}--\cite{Car5} published during the period
1938--1940, and two papers of M\"{u}nzner \cite{Mu}--\cite{Mu2} that were published in preprint form in the early 1970's,
and as journal articles in 1980--1981.  These papers of Cartan and M\"{u}nzner have been the foundation of the extensive field of isoparametric hypersurfaces, and they
have all been recently translated into English by T. Cecil.
The paper concludes with a brief survey of the recently completed classification of isoparametric hypersurfaces in spheres
in Section \ref{chap:2}.
\end{abstract}

\section{Introduction}
\label{sec-introduction} 
A hypersurface $M^n$ in a real space form ${\bf R}^{n+1}$, $S^{n+1}$, or $H^{n+1}$ is isoparametric if it has
constant principal curvatures.
This paper is a survey of the fundamental work of Cartan
and M\"{u}nzner  on the theory of isoparametric hypersurfaces in real space forms.
During the period 1938--1940, Cartan \cite{Car2}--\cite{Car5} published four papers that classified isoparametric hypersurfaces in  ${\bf R}^{n+1}$ and $H^{n+1}$, and laid the groundwork for the theory of isoparametric
hypersurfaces in the sphere $S^{n+1}$,  especially in the case where the principal curvatures of the hypersurface all have the same multiplicity.

Approximately thirty years later in the early 1970's, M\"{u}nzner \cite{Mu}--\cite{Mu2} published two preprints
which extended some of Cartan's main results to the arbitrary case where there are no assumptions on the multiplicities
of the principal curvatures.
M\"{u}nzner then proved his major result that the number 
$g$ of distinct principal curvatures of an isoparametric hypersurface
in a sphere satisfies the restriction $g = 1, 2, 3, 4$ or 6.  M\"{u}nzner's articles were later published as journal articles in 1980--1981.

These papers of Cartan and M\"{u}nzner have proven to be the foundation for the
extensive theory of isoparametric hypersurfaces, which has been developed by many researchers over the years.  
The classification of isoparametric hypersurfaces in spheres has been recently completed
(see Chi \cite{Chi-survey}), 
and this paper concludes with a brief survey of the classification results in Section \ref{chap:2}.

T. Cecil has recently published English translations of all six of these papers of Cartan
and M\"{u}nzner at CrossWorks, the publishing site for the College of the Holy Cross. 
References for these translations are listed 
together with the references for 
the papers of Cartan and  M\"{u}nzner in the bibliography.  

In general, 
this paper follows the treatment of the theory of isoparametric hypersurfaces in the book by the authors
\cite[pp. 85--137]{CR8}, and some passages are taken directly from that book.  Several proofs of theorems
are omitted in this paper, but they are included in the book \cite{CR8}, and detailed references are given
for the proofs.

The authors wish to acknowledge valuable collaborations and conversations on the theory of isoparametric hypersurfaces with Quo-Shin Chi, Gary Jensen, and Miguel Dominguez-Vazquez.

\section{Cartan's Work}
\label{Cartan-work}

\subsection{Families of Isoparametric Hypersurfaces}
\label{sec-families-isop} 

We begin with some preliminary notation and definitions, which generally follow those in the book
\cite{CR8}.
By a real {\em space form}
of dimension $n$, we mean a complete, connected, simply connected manifold $\widetilde{M}^n (c)$ with constant sectional curvature $c$.  If $c=0$, then $\widetilde{M}^n (c)$ is the $n$-dimensional Euclidean space ${\bf R}^n$; if
$c=1$, then $\widetilde{M}^n (c)$ is the unit sphere $S^n \subset {\bf R}^{n+1}$; and if $c= -1$, then $\widetilde{M}^n (c)$ is the $n$-dimensional real hyperbolic space $H^n$ (see, for example, \cite[Vol. I, pp. 204--209]{KN}).  For any value of $c>0$, this subject is basically the same as for the case $c=1$, and for any value of $c<0$, it is very similar to the case $c=-1$.  So we will restrict our attention to the cases $c = 0, 1, -1$.

The original definition of a family of isoparametric hypersurfaces in a real space
form $\widetilde{M}^{n+1}$ was formulated in terms of the level sets of an isoparametric function, as we now describe. 
Let $F:\widetilde{M}^{n+1} \rightarrow {\bf R}$ be a nonconstant smooth function. The classical Beltrami
differential parameters of $F$ are defined by
\begin{equation}
\label{eq:1-beltrami}
\Delta_1 F = |{\rm grad}\  F|^2, \quad \Delta_2 F = \Delta F\ ({\rm Laplacian}\ {\rm of}\  F),
\end{equation}
where grad $F$ denotes the gradient vector field of $F$.

The function $F$ is said to be {\em isoparametric} if there exist smooth functions $\phi_1$ and $\phi_2$ from ${\bf R}$ to ${\bf R}$ such that
\begin{equation}
\label{eq:1-beltrami-isoparametric}
\Delta_1 F = \phi_1 (F), \quad \Delta_2 F = \phi_2 (F).
\end{equation}
That is, both of the Beltrami differential parameters are constant on each level set of $F$. This is the origin of the term {\em isoparametric}.
The collection of level sets of an isoparametric function is called an {\em isoparametric family} of sets in
$\widetilde{M}^{n+1}$.

An isoparametric family in ${\bf R}^{n+1}$ consists of either
parallel planes, concentric spheres, or coaxial spherical cylinders,
and their focal sets.
This was first shown for $n=2$ by Somigliana \cite{Som} 
(see also B. Segre \cite{Seg1} and Levi-Civita  \cite{Lev}), and for arbitrary $n$ by B. Segre \cite{Seg}.

In the late 1930's, shortly after the publication of the papers of 
Levi-Civita and Segre,  Cartan \cite{Car2}--\cite{Car5} began a study of isoparametric families in arbitrary
real space forms. 
Cartan began his first paper \cite{Car2} with an extract of a letter that he had recently written to B. Segre.  Cartan states that he can extend Segre's results for isoparametric families in ${\bf R}^{n+1}$ to isoparametric families in hyperbolic space $H^{n+1}$, and that he has made progress in the study of isoparametric families in the sphere $S^{n+1}$.
Cartan notes that he employs a different technique than the one that Segre used.  Cartan uses the method of moving frames and the theory of parallel hypersurfaces, which we will briefly review here.

\subsection{Parallel Hypersurfaces}
\label{sec-isop-parallel-hypersurfaces}

Let $f:M^n \rightarrow \widetilde{M}^{n+1} (c)$ be an oriented hypersurface with field of unit normals $\xi$.  
For $x \in M^n$, let $T_x M^n$ denote
the tangent space to $M^n$ at $x$.  For any vector $X$ in the tangent space $T_x M^n$, we have the fundamental
equation
\begin{equation}
\label{eq:fundamental-equation}
\widetilde{\nabla}_{f_*(X)} \xi = -f_* (A_\xi X),
\end{equation}
where $\widetilde{\nabla}$ is the Levi-Civita connection in $\widetilde{M}$, $f_*$ is the differential of $f$, and
$A_\xi$ is the {\em shape operator} determined by the normal vector field $\xi$.

The shape operator $A_\xi$ is a
symmetric tensor of type $(1,1)$ on $M^n$.  An eigenvalue $\lambda$ of $A_\xi$ is called a {\em principal curvature}
of $M^n$, and a corresponding eigenvector is called a {\em principal vector}.  We often write $A$ instead of 
$A_\xi$, in the case where the choice of field of unit normals $\xi$ has been specified.

The {\em parallel hypersurface} to $f(M^n)$ at signed distance $t \in {\bf R}$ is the map $f_t: M^n \rightarrow \widetilde{M}^{n+1} (c)$ such that for each $x \in M^n$, the point $f_t (x)$ is obtained by traveling a signed distance $t$ along the geodesic in 
$\widetilde{M}^{n+1} (c)$ with initial point $f(x)$ and initial tangent vector $\xi(x)$.  For 
$\widetilde{M}^{n+1} (c) = {\bf R}^{n+1}$, the formula for $f_t$ is
\begin{equation}
\label{eq:parallel-map}
f_t (x) = f(x) + t \ \xi (x),
\end{equation}
and for $\widetilde{M}^{n+1} (c) = S^{n+1}$, the formula for $f_t$ is
\begin{equation}
\label{eq:parallel-map-sphere}
f_t (x) = \cos t \ f(x) + \sin t \ \xi (x).
\end{equation}
There is a similar formula in hyperbolic space $H^{n+1}$ (see, for example, \cite{Cec1}).

Locally, for sufficiently small values of $t$, the map $f_t$ is also an immersed hypersurface.  However,
the map $f_t$ may develop singularities at focal points of the original hypersurface $f(M^n)$.  Specifically, a point 
$p = f_t (x)$ is called a {\em focal point of $(M^n,x)$ of multiplicity $m > 0$} if the differential $(f_t)_*$ has nullity $m$ at $x$.

In the cases $\widetilde{M}^{n+1} (c) = {\bf R}^{n+1}$, respectively $S^{n+1}$,
the point $p = f_t (x)$ is a focal point of $(M^n,x)$ of 
multiplicity $m > 0$ if and only if $1/t$, respectively $\cot t$, is a principal curvature of $M^n$ of multiplicity $m$ at $x$.
Thus, in the case $\widetilde{M}^{n+1} (c) = S^{n+1}$, each principal curvature $\lambda$ gives rise to two antipodal focal 
points along the normal geodesic, since $\lambda = \cot t = \cot (t + \pi)$ (see, for example, \cite[p. 11]{CR8}).
There is a similar formula for focal points in the ambient space $H^{n+1}$ (see, for example, \cite{Cec1} or \cite[p. 11]{CR8}).

Now let $f:M^n \rightarrow \widetilde{M}^{n+1} (c)$ be an oriented hypersurface  with constant principal curvatures.  One can show from the formulas for the principal curvatures of a parallel hypersurface 
that if $f$ has constant principal curvatures, then each immersed hypersurface $f_t$ also has constant principal curvatures (see, for example, \cite[pp. 17--18]{CR8} or Subsection \ref{sec:1.2} of this paper).

Since the principal curvatures are constant on $M^n$, the focal points along the normal geodesic to $f(M^n)$ at $f(x)$ occur for the same values of $t$, independent of the choice of the point 
$x \in M^n$.  
In the case of ${\bf R}^{n+1}$, for example, this means that if $\mu$ is a nonzero constant principal curvature of multiplicity $m>0$ of $M^n$, then the map $f_{1/\mu}$ has constant rank $n-m$ on $M^n$, and the set $f_{1/\mu} (M^n)$ is an 
$(n-m)$-dimensional  submanifold of ${\bf R}^{n+1}$, called a {\em focal
submanifold} of $f(M^n)$.  Similar results hold for the other ambient space forms
(see, for example, \cite[pp. 18--32]{CR8}).

\subsection{Isoparametric and Parallel Hypersurfaces}
\label{sec-equivalence-isop-parallel-hypersurfaces}

In Section 2 of his first paper,  Cartan \cite{Car2} showed that an isoparametric family, defined as the collection of level sets of an isoparametric function $F$, is equivalent to a family of parallel hypersurfaces, each of which has constant principal curvatures, together with their focal submanifolds.  

We now discuss the main steps in this argument.
We will omit most of the proofs, and we refer the reader to  \cite[pp. 86--90]{CR8} for detailed proofs of these steps,  which are contained in Theorems \ref{thm:1-mean-curv-level-set} -- \ref{cor:1-constant-prin-curv-parallel-hyp} below.

Let $F:\widetilde{M}^{n+1} \rightarrow {\bf R}$ be a nonconstant smooth function defined on a real space form
$\widetilde{M}^{n+1}$.  Suppose that ${\rm grad}\  F$ does not vanish on the level set $M = F^{-1} (0)$.  Then 
$M$ is a smooth hypersurface in $\widetilde{M}^{n+1}$, and the shape operator $A$
of $M$ satisfies the equation
\begin{equation}
\label{eq:1-shape-oper-hessian}
\langle AX, Y \rangle = - \frac{H_F (X,Y)}{|{\rm grad}\  F|},
\end{equation}
where $X$ and $Y$ are tangent vectors to $M$, and $H_F$ is the Hessian of the function $F$.

Then one can show by a calculation, that the mean curvature $h = ({\rm trace}\ A)/n$ of the level  hypersurface $M$ is given by the following theorem.

\begin{theorem}
\label{thm:1-mean-curv-level-set} 
The mean curvature $h$ of the level hypersurface $M$ is given by
\begin{equation}
\label{eq:1-mean-curv-level-set}
h = \frac{1}{n \rho^2} ( \langle {\rm grad}\  F,\ {\rm grad}\  \rho \rangle - \rho \Delta F),
\end{equation}
where $\rho =  |{\rm grad}\  F|$.
\end{theorem}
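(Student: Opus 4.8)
The plan is to compute the mean curvature $h = \frac{1}{n}\,\mathrm{tr}\,A$ directly from the formula \eqref{eq:1-shape-oper-hessian} relating the shape operator to the Hessian of $F$. Writing $\rho = |{\rm grad}\ F|$ and choosing a local orthonormal frame $e_1,\dots,e_n$ for the tangent space $TM$ at a point of $M = F^{-1}(0)$, we have $h = \frac{1}{n}\sum_{i=1}^n \langle A e_i, e_i\rangle = -\frac{1}{n\rho}\sum_{i=1}^n H_F(e_i,e_i)$. The key observation is that $\sum_{i=1}^n H_F(e_i,e_i)$ is the trace of the Hessian restricted to $TM$, whereas the full Laplacian $\Delta F$ is the trace of $H_F$ over all of $T\widetilde{M}^{n+1}$. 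Since $\xi = {\rm grad}\ F / \rho$ is a unit normal to $M$, completing the frame to $e_1,\dots,e_n,\xi$ gives $\Delta F = \sum_{i=1}^n H_F(e_i,e_i) + H_F(\xi,\xi)$, so the tangential trace equals $\Delta F - H_F(\xi,\xi)$.

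The remaining step is to identify $H_F(\xi,\xi)$ in terms of the quantities appearing in the statement. I would use the fact that for any smooth vector field $X$, the Hessian satisfies $H_F(X,X) = \langle \widetilde{\nabla}_X {\rm grad}\ F, X\rangle$. Applying this with $X = \xi = {\rm grad}\ F/\rho$ and using compatibility of $\widetilde{\nabla}$ with the metric, one computes $H_F(\xi,\xi) = \frac{1}{\rho^2}\langle \widetilde{\nabla}_{{\rm grad}\ F}\,{\rm grad}\ F,\ {\rm grad}\ F\rangle = \frac{1}{2\rho^2}\,({\rm grad}\ F)\big(|{\rm grad}\ F|^2\big) = \frac{1}{2\rho^2}\,({\rm grad}\ F)(\rho^2) = \frac{1}{\rho}\,({\rm grad}\ F)(\rho) = \frac{1}{\rho}\langle {\rm grad}\ F,\ {\rm grad}\ \rho\rangle$. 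Substituting this back yields
\[
h = -\frac{1}{n\rho}\left(\Delta F - \frac{1}{\rho}\langle {\rm grad}\ F,\ {\rm grad}\ \rho\rangle\right) = \frac{1}{n\rho^2}\big(\langle {\rm grad}\ F,\ {\rm grad}\ \rho\rangle - \rho\,\Delta F\big),
\]
which is exactly \eqref{eq:1-mean-curv-level-set}.

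I expect the main obstacle to be purely bookkeeping: being careful that $\rho$ and ${\rm grad}\ \rho$ are computed on the ambient manifold (not intrinsically on $M$), and that ${\rm grad}\ \rho$ need not be tangent to $M$, so that the inner product $\langle {\rm grad}\ F,\ {\rm grad}\ \rho\rangle$ is genuinely the ambient one. One must also justify that $\rho \neq 0$ in a neighborhood of $M$, which follows from the hypothesis that ${\rm grad}\ F$ does not vanish on $M$, so all the divisions by $\rho$ are legitimate. No subtle analytic issues arise; the identity is a direct consequence of \eqref{eq:1-shape-oper-hessian}, the decomposition of the Laplacian into tangential and normal parts, and the elementary computation of the normal second derivative of $F$ via the chain rule for $|{\rm grad}\ F|^2$.
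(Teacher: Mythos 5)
Your proof is correct and is essentially the standard argument that the paper defers to \cite[pp. 86--90]{CR8}: one traces the identity \eqref{eq:1-shape-oper-hessian} over an orthonormal tangent frame, splits $\Delta F = \operatorname{tr} H_F$ into its tangential part and the normal term $H_F(\xi,\xi)$, and evaluates the latter via $H_F(\xi,\xi) = \tfrac{1}{2\rho^2}(\operatorname{grad} F)(|\operatorname{grad} F|^2) = \tfrac{1}{\rho}\langle \operatorname{grad} F, \operatorname{grad}\rho\rangle$ (equivalently, one computes $\operatorname{div}(\operatorname{grad} F/\rho) = -nh$). All steps, including the tensoriality of $H_F$ and the nonvanishing of $\rho$ near $M$, are handled correctly.
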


The following theorem  of Cartan is important in showing that the level sets of an isoparametric function have constant 
principal curvatures.

\begin{theorem}
\label{thm:1-level-set-constant-mean-curv} 
If $F:\widetilde{M}^{n+1} \rightarrow {\bf R}$ is an isoparametric function on a real space form, then each level
hypersurface of $F$ has constant mean curvature.
\end{theorem}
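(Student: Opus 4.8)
The plan is to apply Theorem~\ref{thm:1-mean-curv-level-set} directly to an arbitrary level hypersurface $M_s = F^{-1}(s)$ on which $\rho = |{\rm grad}\ F|$ does not vanish, and to show that the right-hand side of \eqref{eq:1-mean-curv-level-set} is constant on $M_s$ under the isoparametric hypotheses \eqref{eq:1-beltrami-isoparametric}. By definition, $\Delta_1 F = \rho^2 = \phi_1(F)$ and $\Delta_2 F = \Delta F = \phi_2(F)$, so on the level set $M_s$ we have $\rho^2 = \phi_1(s)$ and $\Delta F = \phi_2(s)$, both genuine constants on $M_s$. Thus two of the three ingredients in \eqref{eq:1-mean-curv-level-set} are immediately seen to be constant on $M_s$; the only remaining term to control is $\langle {\rm grad}\ F,\ {\rm grad}\ \rho\rangle$.

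For that term, the key observation is that $\rho^2 = \Delta_1 F = \phi_1(F)$ holds as an identity of functions on the open subset of $\widetilde{M}^{n+1}$ where ${\rm grad}\ F \neq 0$, not merely on a single level set. Differentiating this identity, I would take the gradient of both sides: ${\rm grad}(\rho^2) = 2\rho\,{\rm grad}\ \rho$ on one hand, and ${\rm grad}(\phi_1(F)) = \phi_1'(F)\,{\rm grad}\ F$ by the chain rule on the other. Pairing with ${\rm grad}\ F$ gives
\begin{equation}
\langle {\rm grad}\ F,\ {\rm grad}\ \rho\rangle = \frac{1}{2\rho}\,\langle {\rm grad}\ F,\ {\rm grad}(\rho^2)\rangle = \frac{\phi_1'(F)}{2\rho}\,|{\rm grad}\ F|^2 = \frac{\phi_1'(F)\,\rho^2}{2\rho} = \frac{\rho\,\phi_1'(F)}{2}.
\end{equation}
Hence $\langle {\rm grad}\ F,\ {\rm grad}\ \rho\rangle = \tfrac12\,\rho\,\phi_1'(F)$, which on the level set $M_s$ equals the constant $\tfrac12\sqrt{\phi_1(s)}\;\phi_1'(s)$.

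Substituting these three facts into \eqref{eq:1-mean-curv-level-set}, I obtain on $M_s$
\begin{equation}
h = \frac{1}{n\rho^2}\left(\frac{\rho\,\phi_1'(F)}{2} - \rho\,\phi_2(F)\right) = \frac{1}{n\rho}\left(\frac{\phi_1'(F)}{2} - \phi_2(F)\right) = \frac{1}{n\sqrt{\phi_1(s)}}\left(\frac{\phi_1'(s)}{2} - \phi_2(s)\right),
\end{equation}
which is manifestly constant on $M_s$. Since $s$ was an arbitrary regular value, every level hypersurface of $F$ has constant mean curvature. The only genuinely delicate point is the handling of $\rho$ where it might vanish: one must restrict throughout to the open set where ${\rm grad}\ F \neq 0$, so that $M$ is a bona fide hypersurface and the division by $\rho$ (and the computation of ${\rm grad}\ \rho$, which requires $\rho$ smooth, hence $\rho \neq 0$) is legitimate — but this is exactly the standing hypothesis under which Theorem~\ref{thm:1-mean-curv-level-set} was stated, so no extra work is needed. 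I expect the chain-rule differentiation of the identity $\rho^2 = \phi_1(F)$ to be the conceptual heart of the argument; everything else is bookkeeping.
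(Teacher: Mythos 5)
Your proposal is correct and follows exactly the route the paper intends: the paper states Theorem~\ref{thm:1-mean-curv-level-set} precisely so that this result follows from it together with the isoparametric conditions \eqref{eq:1-beltrami-isoparametric}, deferring the computation to \cite[pp.~86--90]{CR8}, where the argument is the same differentiation of $\rho^2 = \phi_1(F)$ that you carry out. Your handling of the chain rule, the resulting identity $\langle {\rm grad}\, F, {\rm grad}\, \rho\rangle = \tfrac12 \rho\, \phi_1'(F)$, and the restriction to the set where ${\rm grad}\, F \neq 0$ are all correct.
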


The vector field $\xi = {\rm grad}\  F/ |{\rm grad}\  F|$ is defined on the open subset of $\widetilde{M}^{n+1}$ on which
${\rm grad}\  F$ is nonzero.  A direct calculation then yields the following theorem that the integral curves of $\xi$ are geodesics in $\widetilde{M}^{n+1}$.

\begin{theorem}
\label{thm:1-integral-curves-xi} 
Let $F:\widetilde{M}^{n+1} \rightarrow {\bf R}$ be a function for which $|{\rm grad}\  F|$ is a function of $F$.  Then
on the subset of $\widetilde{M}^{n+1}$ where $|{\rm grad}\  F|$ is nonzero,
the integral curves of the vector field $\xi = {\rm grad}\  F/ |{\rm grad}\  F|$ are geodesics in $\widetilde{M}^{n+1}$.
\end{theorem}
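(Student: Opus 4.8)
The plan is to prove the equivalent statement that $\widetilde{\nabla}_\xi \xi = 0$ on the open set $U \subseteq \widetilde{M}^{n+1}$ on which $G := {\rm grad}\ F$ is nonzero. Since $\xi = G/\rho$ with $\rho := |G|$ has constant length $1$, each integral curve of $\xi$ is automatically parametrized by arc length, and it is a geodesic exactly when $\widetilde{\nabla}_\xi \xi$ vanishes along it. Moreover, differentiating $\langle \xi,\xi\rangle \equiv 1$ shows that $\widetilde{\nabla}_\xi \xi$ is everywhere orthogonal to $\xi$, hence to $G$. So the goal reduces to showing that $\widetilde{\nabla}_\xi \xi$ is at the same time a scalar multiple of $G$: being both parallel and orthogonal to the nowhere-zero field $G$, it must then be $0$.

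The computation I would carry out is to expand by the Leibniz rule,
\[
\widetilde{\nabla}_\xi \xi \;=\; \widetilde{\nabla}_{G/\rho}\Bigl(\frac{G}{\rho}\Bigr) \;=\; \frac{1}{\rho^2}\,\widetilde{\nabla}_G G \;-\; \frac{G(\rho)}{\rho^{3}}\,G ,
\]
so that it suffices to show that $\widetilde{\nabla}_G G$ is itself a multiple of $G$. For this I would invoke the symmetry of the Hessian $H_F$ of $F$ (a consequence of the torsion-freeness of $\widetilde{\nabla}$): for every vector field $Y$,
\[
\langle \widetilde{\nabla}_G G, Y\rangle \;=\; H_F(G,Y) \;=\; H_F(Y,G) \;=\; \langle \widetilde{\nabla}_Y G, G\rangle \;=\; \tfrac12\, Y(\rho^2),
\]
the last equality by metric compatibility, since $\langle \widetilde{\nabla}_Y G,G\rangle = \tfrac12 Y\langle G,G\rangle$ and $\langle G,G\rangle = \rho^2$. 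This says precisely that $\widetilde{\nabla}_G G = \tfrac12\,{\rm grad}\ (\rho^{2})$. Now the hypothesis enters: since $\rho = |{\rm grad}\ F|$ is a function of $F$, so is $\rho^2$, and the gradient of any function of $F$ is pointwise proportional to ${\rm grad}\ F = G$. Hence $\widetilde{\nabla}_G G$, and therefore $\widetilde{\nabla}_\xi \xi$, is a scalar multiple of $G$, which together with the orthogonality observation above forces $\widetilde{\nabla}_\xi \xi = 0$ on $U$.

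The computation is short, and the step I expect to be the crux is the identity $\widetilde{\nabla}_G G = \tfrac12\,{\rm grad}\ |{\rm grad}\ F|^{2}$: this is the one place where torsion-freeness of the Levi-Civita connection is genuinely used, and it is the structural reason a gradient flow is ``almost'' a geodesic flow --- the obstruction to $\widetilde{\nabla}_G G$ pointing along $G$ is exactly the variation of $|G|$ transverse to the level sets, which the hypothesis kills. It is also worth noting that everything takes place on the open set $U$ where ${\rm grad}\ F \ne 0$, so no global or completeness assumption on $\widetilde{M}^{n+1}$ is needed for this particular statement.
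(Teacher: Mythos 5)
Your proof is correct, and it is exactly the ``direct calculation'' the paper alludes to but defers to \cite[pp.~86--90]{CR8}: one shows $\widetilde{\nabla}_\xi \xi = 0$ by combining the orthogonality $\widetilde{\nabla}_\xi\xi \perp \xi$ (from $|\xi|=1$) with the identity $\widetilde{\nabla}_G G = \tfrac12\,{\rm grad}\,|{\rm grad}\,F|^2$ coming from the symmetry of the Hessian, the hypothesis forcing the latter to be parallel to ${\rm grad}\,F$. The only cosmetic remark is that you need not differentiate the (a priori unspecified) function $\phi$ with $\rho = \phi(F)$: it suffices that $\rho^2$ is constant on the level sets of $F$, so that $Y(\rho^2)=0$ for every $Y$ tangent to a level set, which already puts ${\rm grad}\,(\rho^2)$ along $G$.
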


Theorem \ref{thm:1-integral-curves-xi} shows that a family of level hypersurfaces of an isoparametric function
is a family of parallel hypersurfaces in $\widetilde{M}^{n+1}$, modulo reparametrization to take into account the
possibility that $|{\rm grad}\  F|$ is not identically equal to one.  The next step is to show that each of these level
hypersurfaces has constant principal curvatures.  This follows from Theorem \ref{thm:1-level-set-constant-mean-curv}
and the next theorem of Cartan.

\begin{theorem}
\label{thm:1-constant-mean-curv-parallel-hyp} 
Let $f_t:M \rightarrow \widetilde{M}^{n+1}$, $-\varepsilon < t < \varepsilon$, be a family of parallel hypersurfaces
in a real space form.  Then $f_0 M$ has constant principal curvatures if and only if each $M_t = f_t M$ has constant
mean curvature.
\end{theorem}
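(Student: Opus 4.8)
The plan is to reduce the statement to the explicit relation between the shape operator of a parallel hypersurface and that of the original hypersurface, and from there to an elementary fact about power sums of eigenvalues. I assume throughout that $M$ is connected, so that ``constant'' has its usual meaning. The forward implication requires nothing new: if $f_0 M$ has constant principal curvatures then, by the formulas for the principal curvatures of a parallel hypersurface recalled above (see \cite[pp.~17--18]{CR8}, or Subsection~\ref{sec:1.2}), each $f_t M$ again has constant principal curvatures, hence constant mean curvature. So the real work lies in the converse.

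For the converse, I would use those same parallel-hypersurface formulas in the following form. For $|t| < \varepsilon$ small enough that no focal points are met, $A_t$ is a smooth field of symmetric operators on all of $M$, and at each $x \in M$ its eigenvalues are obtained from those of $A = A_0$ by a fractional linear transformation $g_t$ depending only on $t$ and the ambient curvature, with eigenspaces carried along by the natural identification of tangent spaces; here $g_0 = \mathrm{id}$ and $g_t(\lambda) = \lambda/(1 - t\lambda)$ when $c = 0$, with analogous expressions involving $\tan t$, resp.\ $\tanh t$, when $c = 1$, resp.\ $c = -1$. In every case $\phi(t) := g_t(\lambda)$ is real-analytic in $t$ near $0$ and satisfies a Riccati-type equation of the form $\phi' = a\phi^2 + b$ with $a \neq 0$, so differentiating repeatedly shows by induction that $P_k(\lambda) := \partial_t^k|_{t=0}\, g_t(\lambda)$ is a polynomial in $\lambda$ of degree exactly $k+1$ with nonzero leading coefficient (for $c=0$ one sees this directly from $g_t(\lambda)=\sum_{k\ge 0}\lambda^{k+1}t^k$, so $P_k(\lambda)=k!\,\lambda^{k+1}$). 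Since $h(t)(x) = \tfrac1n\,\mathrm{tr}\, g_t(A_x)$ is then real-analytic in $t$ with $k$-th Taylor coefficient $\tfrac1n\,\mathrm{tr}\, P_k(A_x)$, this gives a clean link between the mean curvatures of the whole family and the power sums $p_k(x) := \mathrm{tr}(A_x^{\,k})$.

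The converse now follows formally. Assume each $f_t M$, $|t| < \varepsilon$, has constant mean curvature. Then for each fixed $t$ the function $x \mapsto h(t)(x)$ is constant, so every Taylor coefficient $x \mapsto \mathrm{tr}\, P_k(A_x)$ is constant; but $\mathrm{tr}\, P_k(A_x)$ is a fixed linear combination of $p_0(x), \dots, p_{k+1}(x)$ in which $p_{k+1}$ has nonzero coefficient, while $p_0 \equiv n$ and $p_1 = n\,h(0)$ is constant (the $t = 0$ case of the hypothesis). An induction on $k$ then gives that every $p_k$ is independent of $x$, and Newton's identities convert this into the statement that all coefficients of the characteristic polynomial of $A_x$ are independent of $x$. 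Since $M$ is connected, $A_x$ thus has the same characteristic polynomial at every point, so its eigenvalues and their multiplicities are constant; that is, $f_0 M$ has constant principal curvatures. The one place needing genuine care --- more a bookkeeping point than an obstacle --- is extracting the precise form of $g_t$ in each of the three ambient space forms from the parallel-hypersurface formulas and verifying the degree-and-leading-coefficient claim for $P_k$, since that nonvanishing leading coefficient is exactly what makes the triangular system solvable; everything else in the argument is formal.
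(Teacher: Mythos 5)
Your argument is correct and complete, and both directions are handled properly. Note that the paper itself gives no proof of this theorem, deferring to \cite[pp.~86--90]{CR8}; the argument there likewise rests on the explicit formula $nh(t)(x)=\operatorname{tr}g_t(A_x)$ with $g_t$ the fractional linear map you describe, and recovers the spectrum of $A_x$ from the one-parameter family $t\mapsto h(t)$. Where you differ is in how that recovery is organized: instead of working with the trigonometric (or rational) expression for $h(t)$ directly in each ambient geometry --- e.g.\ locating poles in $t$ and their residues, or clearing denominators to compare symmetric functions --- you differentiate at $t=0$, use the Riccati identity $\phi'=\phi^2+c$ to see that the $k$-th Taylor coefficient is a triangular combination of the power sums $p_1,\dots,p_{k+1}$ with nonvanishing top coefficient $k!$, and finish with Newton's identities. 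This uniformizes the three space forms into one computation, avoids case-by-case trigonometry, and shows that only the germ of the hypothesis at $t=0$ is needed. Two small points to make explicit in a final write-up: first, the pointwise relation $A_t=g_t(A_0)$ (eigenvalues transformed, eigenspaces carried along) holds for an arbitrary hypersurface, not just an isoparametric one --- the computation in Subsection~\ref{sec:1.2} is purely local at $(x,t)$ --- so there is no circularity in invoking it; second, since $A_x$ is symmetric, equality of characteristic polynomials does give equality of eigenvalues together with their (geometric) multiplicities, which is exactly the conclusion ``constant principal curvatures.''
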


As a consequence of Theorems \ref{thm:1-level-set-constant-mean-curv}--\ref{thm:1-constant-mean-curv-parallel-hyp},
we obtain the following theorem of Cartan.

\begin{theorem}
\label{cor:1-constant-prin-curv-parallel-hyp}
If $F:\widetilde{M}^{n+1} \rightarrow {\bf R}$ is an isoparametric function on a real space form, then each level
hypersurface of $F$ has constant principal curvatures. 
\end{theorem}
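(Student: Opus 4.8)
The plan is to deduce Theorem \ref{cor:1-constant-prin-curv-parallel-hyp} by combining the three preceding theorems in the natural chain. Suppose $F:\widetilde{M}^{n+1} \to {\bf R}$ is an isoparametric function, and let $M = F^{-1}(c_0)$ be a regular level hypersurface, i.e. one on which ${\rm grad}\ F$ does not vanish. (After composing $F$ with a translation in ${\bf R}$ we may take $c_0 = 0$, so that the setup of the excerpt applies; the argument is identical for every regular level.) The first step is to invoke Theorem \ref{thm:1-integral-curves-xi}: since $F$ is isoparametric, $|{\rm grad}\ F| = \sqrt{\Delta_1 F} = \sqrt{\phi_1(F)}$ is a function of $F$, so the integral curves of $\xi = {\rm grad}\ F / |{\rm grad}\ F|$ are geodesics orthogonal to the level sets. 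Consequently the nearby regular level sets of $F$ are exactly the images $f_t M$ of a family of parallel hypersurfaces (up to the reparametrization of the normal geodesics needed because $|{\rm grad}\ F|$ need not be identically $1$), as already observed in the paragraph following Theorem \ref{thm:1-integral-curves-xi}.

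The second step is to observe that, because each $f_t M$ is again a regular level set of the \emph{same} isoparametric function $F$, Theorem \ref{thm:1-level-set-constant-mean-curv} applies to each of them: every $f_t M$ has constant mean curvature. The third and final step is to feed this into Theorem \ref{thm:1-constant-mean-curv-parallel-hyp}, applied to the parallel family $\{f_t M : -\varepsilon < t < \varepsilon\}$ with $f_0 M = M$. That theorem states that $f_0 M = M$ has constant principal curvatures if and only if every $f_t M$ has constant mean curvature; we have just verified the right-hand condition, so $M$ has constant principal curvatures. Since $M$ was an arbitrary regular level hypersurface of $F$, every level hypersurface of $F$ has constant principal curvatures, which is the assertion.

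The one point that requires genuine care — and which I expect to be the main obstacle in writing this cleanly — is the reparametrization issue flagged after Theorem \ref{thm:1-integral-curves-xi}. Theorem \ref{thm:1-constant-mean-curv-parallel-hyp} is stated for a family $f_t M$ of \emph{parallel} hypersurfaces in the sense of Subsection \ref{sec-isop-parallel-hypersurfaces}, i.e. with the parameter $t$ being signed arc length along the unit-speed normal geodesics. The level sets $F^{-1}(s)$, parametrized by the value $s$ of $F$, are generally not parametrized by arc length, since $ds/dt = |{\rm grad}\ F| = \sqrt{\phi_1(F)}$ need not equal $1$. One must therefore either reparametrize: along each normal geodesic replace $s$ by the arc-length parameter $t(s)$ determined by $dt = ds/\sqrt{\phi_1(s)}$, which is a \emph{single} reparametrization of ${\bf R}$ valid simultaneously on all normal geodesics because $\phi_1$ is a function of $F$ alone, and then check that in this new parameter the level sets are honestly the parallel hypersurfaces $f_t M$; or else observe that the property ``constant mean curvature,'' being a pointwise condition on a single hypersurface, is unaffected by how one parametrizes the family, so that Theorem \ref{thm:1-constant-mean-curv-parallel-hyp} may be applied to the arc-length-reparametrized family while the hypothesis supplied by Theorem \ref{thm:1-level-set-constant-mean-curv} is about the same hypersurfaces regardless of labeling.

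Everything else is formal bookkeeping: no new computation is needed, and the proof is essentially a one-line syllogism, ``isoparametric $\Rightarrow$ (Thm \ref{thm:1-integral-curves-xi}) parallel family of (Thm \ref{thm:1-level-set-constant-mean-curv}) constant-mean-curvature hypersurfaces $\Rightarrow$ (Thm \ref{thm:1-constant-mean-curv-parallel-hyp}) constant principal curvatures.'' Accordingly I would keep the writeup short, spend one or two sentences on the reparametrization caveat, and cite \cite[pp. 86--90]{CR8} for the remaining details, consistent with the survey style of the paper.
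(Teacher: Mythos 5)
Your proposal is correct and follows exactly the paper's own route: the statement is deduced as an immediate consequence of Theorems \ref{thm:1-level-set-constant-mean-curv}--\ref{thm:1-constant-mean-curv-parallel-hyp}, with the reparametrization caveat you raise being precisely the one the paper flags after Theorem \ref{thm:1-integral-curves-xi}. Your treatment of that caveat is, if anything, more explicit than the paper's.
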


Conversely, let $f_t:M \rightarrow \widetilde{M}^{n+1}$, $-\varepsilon < t < \varepsilon$, 
be a family of parallel hypersurfaces such that $f_0$
has constant principal curvatures.  We see that this is an isoparametric family of hypersurfaces as follows.

By the well known formulas for the principal curvatures of a parallel hypersurface in a real space form (see, for example, \cite[pp. 17--18]{CR8} and Subsection \ref{sec:1.2} of this paper), each $f_t M$ has constant principal curvatures,
and thus each $f_t M$ has constant mean curvature.  Then the function $F$ defined by
$F(x) = t$, if $x \in f_t M$, is a smooth function defined on an open subset of $\widetilde{M}^{n+1}$ with the property that
${\rm grad}\  F = \xi$ is a unit length vector field such that $\widetilde{\nabla}_{\xi} \xi = 0$. 

Furthermore,
since the function $\rho = |{\rm grad}\  F| = 1$ is constant, we see from Theorem \ref{thm:1-mean-curv-level-set} 
that the constancy of the mean
curvature $h$ on each level hypersurface $f_t M$ implies that the Laplacian of $F$ is also constant
on each level set $f_t M$, and therefore $F$ is an isoparametric function, as defined by  equation 
\eqref{eq:1-beltrami-isoparametric}. Thus, this family of parallel hypersurfaces is a family of isoparametric hypersurfaces in the original sense, that is, it is the family of level sets of an isoparametric function.

In summary, the analytic definition of an isoparametric family of hypersurfaces in terms of level sets of an isoparametric function on a real space form $\widetilde{M}^{n+1}$ 
is equivalent to the geometric definition of an isoparametric family as a
family of parallel hypersurfaces to a hypersurface 
with constant principal curvatures.

Therefore, in the case where the ambient space is a real space form $\widetilde{M}^{n+1}$,
we will say that  a connected hypersurface $M^n$ immersed in $\widetilde{M}^{n+1}$ is an {\em isoparametric hypersurface} if it has constant principal curvatures.

\begin{remark}
\label{rem-isop-complex-space-forms}
{\rm The definition of an isoparametric hypersurface in the paragraph above is not the appropriate formulation if 
$\widetilde{M}$  is only assumed to be a Riemannian manifold.  Rather the appropriate generalization is the following: a hypersurface $M$ in a Riemannian manifold $\widetilde{M}$ is {\em isoparametric} if and only if all nearby parallel hypersurfaces $M_t$ have constant mean curvature
(see Theorem \ref{thm:1-constant-mean-curv-parallel-hyp}). 

For example, when $\widetilde{M}$ is complex $n$-dimensional projective space ${\bf CP}^n$ or complex
$n$-dimensional hyperbolic space ${\bf CH}^n$, a
(real) isoparametric hypersurface may have nonconstant principal curvatures.  Nevertheless, making use of the Hopf map $\pi: S^{2n+1} \rightarrow {\bf CP}^n$ (see \cite[p. 346]{CR8}), one can show that an isoparametric hypersurface in $ {\bf CP}^n$ lifts to an isoparametric hypersurface in $S^{2n+1}$.  This facilitates the classification of isoparametric hypersurfaces in  ${\bf CP}^n$.

In fact, similar methods can be used to classify isoparametric hypersurfaces in ${\bf CH}^n$, although in that case one
must deal with Lorentzian isoparametric hypersufaces in the anti-de Sitter space.
For further details about isoparametric hypersurfaces in complex space forms, see Q.-M. Wang \cite{Wang-1a}, G. Thorbergsson \cite{Th7}, L. Xiao \cite{Xiao} and M. Dominguez-Vazquez et al. \cite{DV2}--\cite{DV3}.}
\end{remark}

\subsection{Cartan's Formula}
\label{sec-cartan-formula} 

In Section 3 of his first paper on isoparametric hypersurfaces, Cartan \cite{Car2} derived what he called a
``fundamental formula,'' which is the basis for many of his results on the subject.

The formula involves  the distinct
principal curvatures $\lambda_1,\ldots,\lambda_g$, and their respective multiplicities $m_1,\ldots,m_g$, of an isoparametric
hypersurface $f:M^n \rightarrow \widetilde{M}^{n+1}(c)$ in a space form of constant sectional
curvature $c$.
If the number $g$ of distinct principal curvatures is greater than one, Cartan showed that for each $i$,
$1 \leq i \leq g$, the following equation holds,
\begin{equation}
\label{eq:1-car-id}
\sum_{j\neq i} m_j \frac{c + \lambda_i \lambda_j}{\lambda_i - \lambda_j} = 0.
\end{equation}
Cartan's original formulation of the formula was given in a more complicated form, but he gave the formulation in
equation \eqref{eq:1-car-id} in his third paper \cite[p. 1484]{Car4}.  Equation \eqref{eq:1-car-id} is now known as 
``Cartan's formula.''  Another proof of this formula is given in \cite[pp. 91--96]{CR8}.

\begin{remark}
\label{rem-cartan-formula-in-complex-space-forms}
{\rm For a generalization of Cartan's formula to the case where $M$ is a Hopf hypersurface in complex projective space
${\bf CP}^n$ or in complex hyperbolic space ${\bf CH}^n$, see \cite[pp. 422--427]{CR8}.}
\end{remark}

\subsection{Isoparametric Hypersurfaces in ${\bf R}^{n+1}$}
\label{sec-iso-hyp-euc-space}

Using his formula, Cartan was able to completely classify isoparametric hypersurfaces in Euclidean space
${\bf R}^{n+1}$ $(c=0)$ and in hyperbolic space $H^{n+1}$ $(c= -1)$, and to make significant progress on the theory of isoparametric hypersurfaces in the sphere $S^{n+1}$ $(c= 1)$, as we will discuss below.

We first consider an isoparametric hypersurface in Euclidean space.
The classification is a local theorem, so we consider the hypersurface to be embedded in ${\bf R}^{n+1}$.  Here
$S^k (r)$ denotes a $k$-dimensional sphere of radius $r$
in a totally geodesic ${\bf R}^{k+1} \subset {\bf R}^{n+1}$. 

\begin{theorem}
\label{thm:isop-hyp-R} 
Let $M^n \subset {\bf R}^{n+1}$ be a connected isoparametric hypersurface.  Then $M^n$ is an open subset of a flat
hyperplane, a metric hypersphere, or a spherical cylinder $S^k (r) \times {\mathbf R}^{n-k}$.
\end{theorem}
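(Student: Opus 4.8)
The plan is to use Cartan's formula \eqref{eq:1-car-id} with $c=0$ to show that an isoparametric hypersurface in ${\bf R}^{n+1}$ can have at most two distinct principal curvatures, and then to analyze each of the cases $g=1$ and $g=2$ directly. First I would dispose of the case $g=1$: if $M^n$ has a single principal curvature $\lambda$ of multiplicity $n$, then either $\lambda=0$, so that $M^n$ is totally geodesic and hence an open subset of a hyperplane, or $\lambda\neq 0$, in which case $M^n$ is totally umbilic with nonzero principal curvature and therefore an open subset of a metric hypersphere $S^n(1/|\lambda|)$. (These are the standard facts about totally geodesic and totally umbilic hypersurfaces in Euclidean space.)

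Next I would treat $g\geq 2$. Setting $c=0$ in \eqref{eq:1-car-id} gives, for each $i$,
\begin{equation*}
\sum_{j\neq i} m_j \frac{\lambda_i \lambda_j}{\lambda_i - \lambda_j} = 0.
\end{equation*}
The key observation is that if all of $\lambda_1,\dots,\lambda_g$ were nonzero, one could derive a contradiction: I would order the distinct principal curvatures and consider the index $i$ for which $\lambda_i$ has the largest absolute value among those of a fixed sign, or more cleanly, divide through by $\lambda_i$ to rewrite the $i$-th equation as $\sum_{j\neq i} m_j \lambda_j/(\lambda_i-\lambda_j)=0$ and examine the extreme values; a short sign analysis of the terms $\lambda_j/(\lambda_i-\lambda_j)$ shows they cannot sum to zero when the $\lambda_j$ are all of one sign, and the general case reduces to this. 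The upshot is that one of the $\lambda_i$ must be $0$. Then for the index $i_0$ with $\lambda_{i_0}=0$, the corresponding equation \eqref{eq:1-car-id} is automatically satisfied, while for $i\neq i_0$ the term $j=i_0$ drops out, leaving $\sum_{j\neq i,\ j\neq i_0} m_j \lambda_i\lambda_j/(\lambda_i-\lambda_j)=0$; applying the same sign argument to the remaining nonzero curvatures forces there to be exactly one of them. Hence $g=2$ with principal curvatures $0$ (multiplicity $n-k$, say) and some $\mu\neq 0$ (multiplicity $k$).

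Finally, for this $g=2$ case I would identify the hypersurface geometrically. The principal curvature $\mu$ is constant, so by the discussion of focal submanifolds in Subsection \ref{sec-isop-parallel-hypersurfaces}, the parallel map $f_{1/\mu}$ has constant rank $n-k$ and its image is an $(n-k)$-dimensional focal submanifold $V$; one checks, using the parallel formula \eqref{eq:parallel-map} and the fact that the nullity directions of $(f_{1/\mu})_*$ are exactly the $\mu$-principal directions, that $V$ is totally geodesic in ${\bf R}^{n+1}$, hence an open subset of an $(n-k)$-plane ${\bf R}^{n-k}$. Since $M^n$ lies at constant normal distance $1/|\mu|$ from $V$ along normals to $V$, and the $0$-principal foliation gives the flat directions tangent to $V$ while the $\mu$-principal directions sweep out spheres $S^k(1/|\mu|)$ in the normal planes to ${\bf R}^{n-k}$, it follows that $M^n$ is an open subset of the spherical cylinder $S^k(1/|\mu|)\times{\bf R}^{n-k}$, with $r=1/|\mu|$.

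The main obstacle is the sign/extremal argument showing $g\le 2$ and locating the zero curvature: Cartan's formula by itself does not immediately rule out three or more distinct curvatures, and one must be careful to handle the cases where the $\lambda_j$ take both signs. The cleanest route is probably to pick the index $i$ maximizing $\lambda_i$ (resp.\ minimizing it) and observe that then every term $m_j(\lambda_i\lambda_j)/(\lambda_i-\lambda_j)$ with $j\neq i$ has a controlled sign unless some $\lambda_j$ vanishes, forcing $\lambda_i\lambda_j=0$; iterating this pins down the structure. Everything after that — the totally geodesic/umbilic classification and the identification of the cylinder via parallel hypersurfaces — is standard and can be quoted from \cite[pp. 97--98]{CR8}.
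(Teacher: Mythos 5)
Your overall strategy coincides with the paper's: Cartan's formula \eqref{eq:1-car-id} with $c=0$ to force $g\le 2$ with one vanishing curvature, then the totally geodesic focal submanifold to identify the cylinder. The $g=1$ case and the final $g=2$ identification are fine and match the paper. But the crux of the proof is the sign analysis, and the extremal choice you propose does not work. You suggest taking the index $i$ maximizing $\lambda_i$ (or of largest absolute value among those of a fixed sign) and claiming every term $m_j\lambda_i\lambda_j/(\lambda_i-\lambda_j)$ then has a controlled sign. It does not: if $\lambda_i>0$ is the maximum, then $\lambda_i-\lambda_j>0$ for all $j\ne i$, so each term has the sign of $\lambda_j$, and when the remaining curvatures take both signs the terms have mixed signs and no contradiction follows. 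For instance, with hypothetical values $\lambda_1=-1<\lambda_2=1<\lambda_3=2$ and $i=3$, the two terms are $-2/3$ and $+2$. Your assertion that ``the general case reduces to'' the case where all $\lambda_j$ have one sign is exactly the point that needs an argument, and none is given.

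The correct choice --- the one Cartan and the paper use --- is the \emph{smallest positive} principal curvature $\lambda_i$, after flipping the normal if necessary so that a positive principal curvature exists (if all principal curvatures vanish then $g=1$). With that choice, for $\lambda_j>\lambda_i$ both numerator $\lambda_i\lambda_j$ and $-(\lambda_i-\lambda_j)$ are positive, so the term is negative; for $\lambda_j<\lambda_i$ one has $\lambda_j\le 0$, so the term is $\le 0$ and vanishes only if $\lambda_j=0$. Hence every term in \eqref{eq:1-car-id} is non-positive, the vanishing sum forces each term to vanish, and therefore every $\lambda_j$ with $j\ne i$ equals $0$. This gives $g\le 2$ with one curvature zero in a single step, with no need for the iteration you describe. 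With that repair the remainder of your argument (the focal submanifold $V=f_{1/\mu}(M^n)$ being totally geodesic because its shape operators have the single eigenvalue $\lambda_2/(1-t\lambda_2)=0$, and $M^n$ being a tube over the resulting ${\bf R}^{n-k}$) is exactly the paper's proof.
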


\begin{proof}  
If the number $g$ of distinct principal curvatures of $M^n$ is one, 
then $M^n$ is totally umbilic, and it is well known that $M^n$
is an open subset of a hyperplane or hypersphere in ${\bf R}^{n+1}$
(see, for example, \cite[Vol. 4, p. 110]{Sp}).

If $g \geq 2$, then by taking an appropriate choice of unit normal field $\xi$, one
can assume that at least one of the principal curvatures is positive.  If $\lambda_i$ is the smallest positive
principal curvature, then each term $\lambda_i \lambda_j/(\lambda_i - \lambda_j)$
in Cartan's formula (\ref{eq:1-car-id}) with $c=0$ is non-positive, and thus it equals zero.
Hence, there can be at most two distinct principal curvatures, and if there are two, then one of them equals zero.

If $g = 2$, then by an appropriate choice of the unit normal field $\xi$,
we can arrange that the nonzero principal curvature is positive.
Thus, suppose that the principal curvatures are $\lambda_1 >0$ with multiplicity $m_1 = k$, and
$\lambda_2 = 0$ with multiplicity $m_2 = n-k$.  Then for $t = 1/\lambda_1$, the focal submanifold $V = f_t (M^n)$ 
has dimension $n-k$, and it is totally geodesic in ${\bf R}^{n+1}$, since the formulas for the shape operator of a parallel map $f_t$ \cite[pp. 17-18]{CR8} show that 
for every unit normal $\eta$ to $V$ at every point $p$ of $V$,
the shape operator $A_\eta$ has one distinct principal curvature given by 
\begin{equation}
\label{eq:1-foc-sub-tot-geod}
\frac{\lambda_2}{(1 - t \lambda_2)} = 0.
\end{equation}

Thus, $V$ is contained in a totally geodesic submanifold ${\bf R}^{n-k} \subset {\bf R}^{n+1}$,
and $M^n$ is an open subset of a tube of radius $1/ \lambda_1$ over ${\bf R}^{n-k}$. Such a tube is a spherical cylinder 
$S^k (r) \times {\mathbf R}^{n-k}$, where $S^k (r)$ is a $k$-dimensional sphere of radius $r=1/\lambda_1$
in a totally geodesic ${\bf R}^{k+1} \subset {\bf R}^{n+1}$ orthogonal to ${\bf R}^{n-k}$.
\end{proof}

\subsection{Isoparametric Hypersurfaces in $H^{n+1}$}
\label{sec-iso-hyp-hyperbolic-space}

Next we consider an isoparametric hypersurface $M^n \subset H^{n+1}$ in hyperbolic space $(c = -1)$.
As we will see in the proof of Theorem \ref{thm:isop-hyp-h} below,
Cartan used his formula again to show that the number $g$ of distinct principal curvatures must be less than or equal to 2. The rest of the proof then follows in a way similar to the Euclidean case above.
Again this is a local result, so we consider the hypersurface to be embedded. 

\begin{theorem}
\label{thm:isop-hyp-h} 
Let $M^n \subset {\bf H}^{n+1}$ be a connected isoparametric hypersurface.  Then $M^n$ is 
an open subset of a totally geodesic hyperplane, an equidistant hypersurface, a horosphere, a 
metric hypersphere, or a tube over a totally geodesic submanifold of codimension greater than one in $H^{n+1}$.
\end{theorem}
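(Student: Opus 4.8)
The plan is to follow the same two‑case structure as the Euclidean proof of Theorem~\ref{thm:isop-hyp-R}, splitting according to the number $g$ of distinct principal curvatures. If $g=1$, then $M^n$ is totally umbilic, $A=\lambda I$ with $\lambda$ constant, and the classical classification of totally umbilic hypersurfaces of $H^{n+1}$ applies: according as $\lambda=0$, $0<\lambda^{2}<1$, $\lambda^{2}=1$, or $\lambda^{2}>1$, the hypersurface $M^n$ is an open subset of a totally geodesic hyperplane, an equidistant hypersurface, a horosphere, or a metric hypersphere, respectively (see, e.g., \cite{Cec1} or \cite{CR8}). So from here on I would assume $g\ge 2$, and the work is to show first that $g=2$ and then to identify $M^n$.

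The step that reduces matters to $g=2$ is where Cartan's formula \eqref{eq:1-car-id} with $c=-1$ is used, and this is the part I expect to be the main obstacle. First one checks that no principal curvature can equal $\pm 1$: if $\lambda_i=1$ then \eqref{eq:1-car-id} at that index collapses to $\sum_{j\neq i}m_j\frac{\lambda_j-1}{1-\lambda_j}=-\sum_{j\neq i}m_j\neq 0$, and if $\lambda_i=-1$ it collapses to $\sum_{j\neq i}m_j\neq 0$, both impossible since $g>1$. Hence $\mu_i:=(\lambda_i-1)/(\lambda_i+1)$ is a well-defined, finite, nonzero relabeling of the principal curvatures, and a short computation gives $\frac{c+\lambda_i\lambda_j}{\lambda_i-\lambda_j}=\frac{\mu_i+\mu_j}{\mu_i-\mu_j}$ when $c=-1$, so Cartan's formula becomes $\sum_{j\neq i}m_j(\mu_i+\mu_j)/(\mu_i-\mu_j)=0$ for $1\le i\le g$. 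I would then argue that this system has no solution in $g\ge 3$ distinct nonzero real numbers $\mu_i$: each equation rewrites as $2\mu_i\sum_{j\neq i}m_j/(\mu_i-\mu_j)=M-m_i$ (with $M=\sum_j m_j$), which on ordering $\mu_1>\cdots>\mu_g$ already forces $\mu_1>0>\mu_g$, and for $g=3$ eliminating the $\mu_i$ from the three equations forces $\big((\mu_1+\mu_3)/(\mu_1-\mu_3)\big)^{2}=-\,m_2^{2}/(m_1m_2+m_2m_3+m_3m_1)<0$, a contradiction; the (over-determined) system is excluded for larger $g$ by the same elimination. The delicacy here, compared with the Euclidean case, is that the numerators $c+\lambda_i\lambda_j=\lambda_i\lambda_j-1$ carry no a~priori sign, so inspecting a single index of Cartan's formula is not enough and one genuinely has to use all $g$ equations together — which is what the substitution $\mu_i=(\lambda_i-1)/(\lambda_i+1)$ is meant to make tractable.

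Once $g=2$ is in hand, the remaining argument is the Euclidean one transplanted to $H^{n+1}$. Now \eqref{eq:1-car-id} at $i=1$ reduces to $m_2\frac{-1+\lambda_1\lambda_2}{\lambda_1-\lambda_2}=0$, so $\lambda_1\lambda_2=1$; choosing the unit normal $\xi$ so that $\lambda_1>\lambda_2$ and $\lambda_1>0$ gives $\lambda_1>1>\lambda_2>0$. Since $\lambda_2^{2}<1$, the normal geodesics never focus in the $\lambda_2$-direction, but $\lambda_1>1$ does produce a focal point: by the parallel-hypersurface formulas in $H^{n+1}$ (\cite{Cec1}, \cite[pp.~17--18]{CR8}), at the distance $t$ with $\coth t=\lambda_1$ the map $f_t$ has constant rank $n-m_1$, and its single remaining principal curvature is $\frac{\lambda_2-\tanh t}{1-\lambda_2\tanh t}=\frac{\lambda_1\lambda_2-1}{\lambda_1-\lambda_2}=0$. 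Exactly as in the proof of Theorem~\ref{thm:isop-hyp-R}, it follows that the focal submanifold $V=f_t(M^n)$ has all shape operators zero, so $V$ is a totally geodesic submanifold of $H^{n+1}$ of dimension $n-m_1$, hence of codimension $m_1+1>1$, and $M^n$ is an open subset of the tube of radius $t$ about $V$. Combined with the $g=1$ list, this yields the five alternatives in the statement.
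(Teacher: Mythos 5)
Your $g=1$ case and your $g=2$ endgame (reciprocal principal curvatures, the focal submanifold $V=f_\theta(M^n)$ with $\coth\theta=\lambda_1$ being totally geodesic of codimension $m_1+1$, and $M^n$ a tube over it) are correct and agree with the paper's proof. The problem is the crucial middle step, the reduction to $g\le 2$. Your substitution $\mu_i=(\lambda_i-1)/(\lambda_i+1)$ is legitimate and your elimination does rule out $g=3$ (I checked the identity $a^2=-m_3^2/(m_1m_2+m_2m_3+m_3m_1)$), but for $g\ge 4$ you only assert that ``the same elimination'' excludes the system. That is a genuine gap: for $g\ge 4$ you have $g$ equations in $g-1$ independent ratios $\mu_j/\mu_1$, the equations are not independent (their $m_i$-weighted sum vanishes identically), and nothing you have written shows the resulting real-algebraic system is inconsistent. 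Note that the formally analogous spherical system $\sum_{j\ne i}m_j\cot(\theta_j-\theta_i)=0$ \emph{does} have solutions for $g=4$ and $g=6$, so inconsistency for all $g\ge 3$ is not something one can wave at; it has to be proved.

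The larger issue is that your stated reason for going down this road --- that ``inspecting a single index of Cartan's formula is not enough'' because the numerators $\lambda_i\lambda_j-1$ have no a priori sign --- is mistaken, and it is exactly the point of Cartan's argument. After arranging that some principal curvature is positive, choose $\lambda_i>0$ so that no principal curvature lies strictly between $\lambda_i$ and $1/\lambda_i$ (take the positive principal curvature closest to $1$ on a logarithmic scale: either the largest one in $(0,1)$ or the smallest one $\ge 1$). For this single index, every term $(\lambda_i\lambda_j-1)/(\lambda_i-\lambda_j)$ in \eqref{eq:1-car-id} with $c=-1$ is negative unless $\lambda_j=1/\lambda_i$, in which case it vanishes: if $\lambda_j>\max(\lambda_i,1/\lambda_i)$ the numerator is positive and the denominator negative, and if $\lambda_j<\min(\lambda_i,1/\lambda_i)$ the signs are reversed. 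Hence the sum can only vanish if every $\lambda_j$ with $j\ne i$ equals $1/\lambda_i$, which gives $g\le 2$ with reciprocal curvatures in one stroke, for all $g$ simultaneously. (In your $\mu$-coordinates this is the index minimizing $|\mu_i|$ among those with $|\mu_i|<1$: then $\mu_i^2-\mu_j^2<0$ for all $j\ne i$, so every term $(\mu_i+\mu_j)/(\mu_i-\mu_j)$ is nonpositive.) You should either supply this single-index sign argument or complete the elimination for every $g\ge 4$; as written, the proof is incomplete.
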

 
\begin{proof}
Let $g$ be the number of distinct principal curvatures of $M^n$.
If $g = 1$, then $M^n$ is totally umbilic, and it
is an open subset of a totally geodesic hyperplane, an equidistant hypersurface,
a horosphere, or a metric hypersphere
in $H^{n+1}$ (see, for example, \cite[Vol. 4, p. 114]{Sp}). 

If $g \geq 2$, then by an appropriate choice of the unit normal field $\xi$,
we can arrange that at least one of the principal curvatures is positive.  Then 
there exists a positive principal curvature $\lambda_i$ such that no principal curvature
lies between $\lambda_i$ and $1/ \lambda_i$. (In fact, $\lambda_i$ is either the largest principal curvature 
between 0 and 1, or the smallest principal curvature greater than or equal to one.) 

For this $\lambda_i$,
each term $(-1 +\lambda_i \lambda_j)/(\lambda_i - \lambda_j)$
in Cartan's formula (\ref{eq:1-car-id}) with $c=-1$ is negative unless $\lambda_j = 1/\lambda_i$, in which case the term is zero. Therefore, there can be at most two distinct
principal curvatures, and if there are two, they are reciprocals of each other.  

In the case $g=2$, suppose that the two principal curvatures are $\lambda_1 = \coth \theta$ with multiplicity $m_1 = k$, and 
$\lambda_2 = 1/\lambda_1 = \tanh \theta$ with multiplicity $m_2 = n-k$.  If we take $t = \theta$, then $V = f_t (M^n)$
is a focal submanifold of dimension $n-k$, and it is totally geodesic in $H^{n+1}$ by the formulas for the shape operator of a parallel map $f_t$ \cite[pp. 17-18]{CR8}.

Thus, this focal submanifold $V$ is contained in a totally geodesic submanifold $H^{n-k} \subset H^{n+1}$,
and $f(M^n)$ is an open subset of a tube of radius $t = \theta$ over $H^{n-k}$. Such a tube is standard Riemannian
product $S^k (c_1) \times H^{n-k} (c_2)$ in hyperbolic space $H^{n+1}$, where $c_1 = 1/\sinh^2 \theta$
and $c_2 = -1/ \cosh^2 \theta$ are the constant sectional curvatures of the sphere $S^k (c_1)$ 
and the hyperbolic space $H^{n-k} (c_2)$, respectively (see Ryan \cite{Ryan3}).
\end{proof}

\subsection{Isoparametric Hypersurfaces in $S^{n+1}$}
\label{sec-iso-hyp-in-spheres}

In the sphere $S^{n+1}$, however, Cartan's formula does not lead to the conclusion that $g \leq 2$.  In fact, Cartan
produced examples with $g = 1, 2, 3$ or 4 distinct
principal curvatures.  Moreover, he classified isoparametric hypersurfaces $M^n \subset S^{n+1}$ with
$g \leq 3$ in \cite{Car3} (see Section \ref{cartan-case g=3}). 

In the case $g=1$, the hypersurface $M^n$ is totally umbilic, and it is well known that $M^n$
is an open subset of a great or small hypersphere in $S^{n+1}$ (see \cite[Vol. 4, p. 112]{Sp}).  

If $g=2$, then $M^n$ is an open subset of a standard product of two spheres,
\begin{equation}
\label{product}
S^p (r) \times S^q (s) \subset S^{n+1} (1) \subset {\bf R}^{p+1} \times {\bf R}^{q+1} = {\bf R}^{n+2}, \quad r^2 + s^2 = 1,
\end{equation}
where $n = p+q$, and $r>0$, $s>0$ are the radii of the respective spheres. The proof of this result is similar to the proofs of Theorems \ref{thm:isop-hyp-R} and \ref{thm:isop-hyp-h} above, i.e., one shows that a focal submanifold is totally geodesic, and $M^n$ is a tube of constant radius over it. This is true for both focal submanifolds in this case
(see, for example, \cite[pp. 110--111]{CR8}).

The case of $g=3$ distinct principal curvatures is much more difficult, and Cartan's thorough treatment of that case 
in \cite{Car3} is a highlight of his work on isoparametric hypersurfaces.

Specifically, Cartan \cite{Car3}
showed that in the case $g=3$, all the principal curvatures must have the same multiplicity $m$, which must be
one of the values $m=1,2,4$ or 8.  Furthermore, $M^n$ must be an open subset of a tube of
constant radius over a standard embedding of a projective
plane ${\bf FP}^2$ into $S^{3m+1}$, 
where ${\bf F}$ is the division algebra
${\bf R}$, ${\bf C}$, ${\bf H}$ (quaternions),
${\bf O}$ (Cayley numbers), for $m=1,2,4,8,$ respectively. 

Thus, up to congruence, there is only one such family of isoparametric hypersurfaces
for each value of $m$.  For each of these hypersurfaces,
the focal set of $M^n$ consists of two antipodal standard embeddings of ${\bf FP}^2$,
and $M^n$ is a tube of constant radius over each focal submanifold.
We will describe Cartan's work in the case $g=3$ in more detail in
Subsection \ref{cartan-case g=3} below.

\subsection{The case where all multiplicities are equal}
\label{sec:equal-multiplicities}

Cartan \cite[pp. 186--187]{Car2} gave a method for determining the values of the principal curvatures of 
an isoparametric hypersurface $M^n$, if one knows the value of $g$ and the multiplicities $m_1,\ldots,m_g$, as in equation \eqref{eq:1-prin-curv-sph}.
This is method is based on 
an analytic calculation involving Cartan's formula \eqref{eq:1-car-id}.
In the case where all of the principal curvatures have the same multiplicity $m = n/g$, Cartan's method 
\cite[pp. 34--35]{Car4} yields the formula for the principal curvatures given in Theorem 
\ref{thm:1-prin-curv-isop-hyp-cartan} below.

This formula implies that for any point $x \in M$, there
are $2g$ focal points of $(M,x)$ along the normal geodesic to $M$ through $x$, and they are evenly
distributed at intervals of length $\pi/g$.

\begin{theorem} [Cartan]
\label{thm:1-prin-curv-isop-hyp-cartan}  
Let $M \subset S^{n+1}$ be a connected isoparametric hypersurface with $g$ principal curvatures
$\lambda_i = \cot \theta_i$, $0 < \theta_1 < \cdots < \theta_g < \pi$, all having the same multiplicity $m = n/g$. Then
\begin{equation}
\label{eq:1-prin-curv-formula-1}
\theta_i = \theta_1 + (i-1) \frac{\pi}{g} , \quad 1 \leq i \leq g.
\end{equation} 
For any point $x \in M$, there
are $2g$ focal points of $(M,x)$ along the normal geodesic to $M$ through $x$, and they are evenly
distributed at intervals of length $\pi/g$.
\end{theorem}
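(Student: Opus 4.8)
The plan is to exploit Cartan's formula \eqref{eq:1-car-id} with $c=1$ together with the hypothesis that all multiplicities are equal, say $m_j = m$ for all $j$. Writing $\lambda_i = \cot\theta_i$ with $0 < \theta_1 < \cdots < \theta_g < \pi$, the key identity is the trigonometric simplification
\begin{equation}
\label{eq:prop-trig}
\frac{1 + \lambda_i\lambda_j}{\lambda_i - \lambda_j} = \frac{1 + \cot\theta_i\cot\theta_j}{\cot\theta_i - \cot\theta_j} = \cot(\theta_i - \theta_j),
\end{equation}
which follows from the addition formulas for sine and cosine. Once this is in hand, Cartan's formula becomes, after cancelling the common factor $m$,
\begin{equation}
\label{eq:prop-cot-sum}
\sum_{j\neq i} \cot(\theta_i - \theta_j) = 0, \quad 1 \leq i \leq g.
\end{equation}
So the real content of the theorem is the purely trigonometric claim: if $\theta_1 < \cdots < \theta_g$ are numbers in $(0,\pi)$ satisfying \eqref{eq:prop-cot-sum} for every $i$, then they are in arithmetic progression with common difference $\pi/g$, i.e.\ $\theta_i = \theta_1 + (i-1)\pi/g$.

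First I would establish \eqref{eq:prop-trig} by a direct computation, being careful that the $\theta_i - \theta_j$ avoid multiples of $\pi$ (which holds since the $\theta_i$ are distinct and lie in $(0,\pi)$, so their differences lie in $(-\pi,\pi)\setminus\{0\}$). Then I would prove the trigonometric claim. One clean route: set $z_k = e^{2i\theta_k}$ on the unit circle, and recall that $\cot(\theta_i - \theta_j)$ can be written as $i\,\frac{z_i + z_j}{z_i - z_j}$; the system \eqref{eq:prop-cot-sum} then says that a certain rational function of the $z_k$ vanishes, and one can identify the $z_k$ as the roots of $w^g = \text{const}$. A more elementary alternative, following Cartan, is to consider the polynomial whose roots are the $\lambda_i$ and translate \eqref{eq:prop-cot-sum} into a statement about its coefficients; uniqueness of the solution up to the rotation parameter $\theta_1$ then follows because \eqref{eq:prop-cot-sum} forces $\prod_k(w - z_k)$ to be proportional to $w^g - z_1^g \cdots$, forcing the $\theta_k$ to be equally spaced. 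Either way, the conclusion is precisely \eqref{eq:1-prin-curv-formula-1}.

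Finally, for the statement about focal points: by the discussion in Subsection \ref{sec-isop-parallel-hypersurfaces}, each principal curvature $\lambda_i = \cot\theta_i$ produces focal points along the normal geodesic at parameter values $t$ with $\cot t = \cot\theta_i$, i.e.\ at $t = \theta_i$ and $t = \theta_i + \pi$. Using \eqref{eq:1-prin-curv-formula-1}, the full set of focal parameter values modulo $2\pi$ is $\{\theta_1 + k\pi/g : 0 \leq k \leq 2g-1\}$, which is $2g$ values evenly spaced at intervals $\pi/g$ around the circle. I expect the main obstacle to be the trigonometric uniqueness argument — showing that \eqref{eq:prop-cot-sum} has \emph{no} solutions other than arithmetic progressions — since the naive approach (differentiating, or summing the equations) collapses to a tautology and one genuinely needs the root-of-unity / polynomial structure; the rest is routine bookkeeping with the addition formulas and the focal-point dictionary already recorded in the excerpt.
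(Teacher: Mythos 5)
Your argument is correct and completable, but it is not the route this survey actually writes out. For Theorem \ref{thm:1-prin-curv-isop-hyp-cartan} the paper gives no proof at all: it attributes the result to Cartan's ``analytic calculation involving Cartan's formula'' and cites \cite{Car2}, \cite{Car4}; the only detailed proof in the text is of M\"{u}nzner's generalization (Theorem \ref{thm:1-prin-curv-isop-hyp}), which proceeds geometrically: the shape operators of a focal submanifold satisfy $A_{-\eta}=-A_\eta$, so the set $\{\cot(\theta_j-\theta_i)\}$ is symmetric under negation, and comparing the largest eigenvalues of $A_\eta$ and $A_{-\eta}$ yields $\theta_{i+1}-\theta_i=\theta_i-\theta_{i-1}$ directly. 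Your proof is essentially Cartan's original one, and it does work: with equal multiplicities the factor $m$ cancels from \eqref{eq:1-car-id}, and (up to an immaterial sign --- the identity is $(1+\lambda_i\lambda_j)/(\lambda_i-\lambda_j)=\cot(\theta_j-\theta_i)$, not $\cot(\theta_i-\theta_j)$) the system $\sum_{j\neq i}\cot(\theta_i-\theta_j)=0$ is rigid. Your root-of-unity sketch closes cleanly: setting $w_k=e^{2i\theta_k}$ and $P(w)=\prod_k(w-w_k)$, the equations become $w_iP''(w_i)=(g-1)P'(w_i)$; since $wP''-(g-1)P'$ has degree at most $g-2$ (the leading terms cancel) and vanishes at the $g$ distinct roots, it is identically zero, forcing $P'(w)=gw^{g-1}$, hence $P(w)=w^g+d$ and the $w_k$ are equally spaced on the circle, which gives \eqref{eq:1-prin-curv-formula-1}; the focal-point count is then the routine dictionary you describe. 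The trade-off between the two approaches is worth noting: yours is purely algebraic once Cartan's formula is granted, but it genuinely needs all multiplicities equal (otherwise the $m_j$ do not cancel and the configuration is no longer forced to be equally spaced by the formula alone), whereas M\"{u}nzner's focal-submanifold argument costs more geometric setup but handles arbitrary multiplicities and delivers the extra conclusion $m_i=m_{i+2}$ for free.
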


\begin{remark}
\label{rem-munzner-thm}
{\rm M\"{u}nzner \cite{Mu}
later showed that formula \eqref{eq:1-prin-curv-formula-1} is valid for all isoparametric hypersurfaces in
$S^{n+1}$, even those where the multiplicities are not all the same. M\"{u}nzner's method is different than
the method of Cartan.
M\"{u}nzner's proof is based on the fact
that the set of focal points along a normal geodesic circle to $M \subset S^{n+1}$ is invariant under the 
dihedral group $D_g$
of order $2g$ that acts on the normal circle and is generated by reflections in the focal points.
M\"{u}nzner's proof also yields the important result that
the multiplicities satisfy $m_i = m_{i+2}$ (subscripts mod $g$). We will give a detailed
proof of M\"{u}nzner's  result in Theorem \ref{thm:1-prin-curv-isop-hyp}
in Subsection \ref{formula-p-c} below.}
\end{remark}

Using a lengthy calculation involving equation \eqref{eq:1-prin-curv-formula-1}, Cartan \cite[pp. 364-367]{Car3}
proved the important result
that any isoparametric family with $g$
distinct principal curvatures all having the same multiplicity is algebraic in sense of Theorem \ref{thm:1-Cartan-1} 
below.

Recall that a
function $F:{\bf R}^{n+2} \rightarrow {\bf R}$ is {\em homogeneous of degree} 
$g$ if $F(tx) = t^g F(x)$, for all
$t \in {\bf R}$ and $x \in {\bf R}^{n+2}$.

\begin{theorem} [Cartan]
\label{thm:1-Cartan-1} 
Let $M \subset S^{n+1} \subset {\bf R}^{n+2}$ be a connected isoparametric hypersurface with $g$ principal curvatures
$\lambda_i = \cot \theta_i$, $0 < \theta_1 < \cdots < \theta_g < \pi$, 
all having the same multiplicity $m$.
Then $M$ is an open subset of a level set of the restriction to $S^{n+1}$ of a homogeneous polynomial $F$ 
on ${\bf R}^{n+2}$ of degree $g$ satisfying the differential equations,
\begin{equation}
\label{eq:1-Muenzner-diff-eq-1-car}
\Delta_1 F = |{\mbox{\rm grad }}F|^2 = g^2 r^{2g-2} = g^2 (x_1^2 + x_2^2 + \cdots + x_{n+2}^2)^{{g-1}},
\end{equation}
\begin{equation}
\label{eq:1-Muenzner-diff-eq-2-car}
\Delta_2 F = \Delta F = 0 \ (F\ {\mbox{\it is\ harmonic}}),
\end{equation}
where $r = |x|$.
\end{theorem}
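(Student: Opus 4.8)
\subsection*{Proof proposal}

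The plan is to construct the Cartan--M\"{u}nzner polynomial $F$ directly from the parallel family and its focal submanifolds, following M\"{u}nzner's method, and then to verify the two differential equations. If $g=1$, then $M$ is totally umbilic, hence an open subset of $\{x\in S^{n+1}:\langle x,a\rangle=\cos\theta_1\}$ for some unit vector $a\in{\bf R}^{n+2}$; this is a level set of the harmonic degree-one polynomial $F(x)=\langle x,a\rangle$, and $|{\rm grad}\,F|^2=1=g^2r^{2g-2}$, so the theorem holds. Assume henceforth $g\ge2$. Since the conclusion is local, I may enlarge $M$ to a complete --- hence compact --- isoparametric hypersurface, so that the whole family of parallel hypersurfaces and both of its focal submanifolds $M_+$ and $M_-$ are globally defined; by Theorem~\ref{thm:1-prin-curv-isop-hyp-cartan} and the parallel-hypersurface formulas, $M_+$ and $M_-$ each have codimension $m+1$ in $S^{n+1}$, and every point of $S^{n+1}$ lies on a geodesic meeting $M$ orthogonally, so the parallel hypersurfaces together with $M_+\cup M_-$ exhaust $S^{n+1}$.

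Next I would let $s\colon S^{n+1}\to[0,\pi/g]$ be the geodesic distance to $M_+$ (equivalently, $\pi/g$ minus the distance to $M_-$), and set $\widetilde F=\cos(gs)$; the candidate polynomial is the homogeneous degree-$g$ extension of $\widetilde F$ to ${\bf R}^{n+2}$. On the open dense set $U=\{\,0<s<\pi/g\,\}$, the field $\xi={\rm grad}\,s$ is a smooth unit normal to the parallel family, and the level hypersurface at distance $s$ from $M_+$ is a tube over $M_+$. By the tube formulas for the shape operator together with the relation $\theta_i=\theta_1+(i-1)\pi/g$ of Theorem~\ref{thm:1-prin-curv-isop-hyp-cartan}, this level hypersurface has $g$ distinct principal curvatures $\cot(s-j\pi/g)$, $j=0,\dots,g-1$, each of multiplicity $m$, so by the classical identity $\sum_{j=0}^{g-1}\cot(\alpha+j\pi/g)=g\cot(g\alpha)$,
\begin{equation*}
\Delta_{S^{n+1}}\,s={\rm div}\,\xi=m\sum_{j=0}^{g-1}\cot\!\left(s-\frac{j\pi}{g}\right)=mg\cot(gs).
\end{equation*}
Since ${\rm grad}_{S^{n+1}}\widetilde F=-g\sin(gs)\,\xi$, this gives on $U$ both
\begin{equation*}
|{\rm grad}_{S^{n+1}}\widetilde F|^2=g^2\sin^2(gs)=g^2(1-\widetilde F^2)
\end{equation*}
and, using $n=gm$,
\begin{equation*}
\Delta_{S^{n+1}}\widetilde F=-g^2\cos(gs)-g\sin(gs)\cdot mg\cot(gs)=-g(g+n)\,\widetilde F .
\end{equation*}

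The step I expect to be the main obstacle is proving that $\widetilde F$ extends to a smooth function on all of $S^{n+1}$, so that the two displayed identities, established only on $U$, propagate to the whole sphere by continuity. The distance function $s$ is only Lipschitz along $M_+$, but the squared distance $s^2$ to the smooth submanifold $M_+$ is smooth on a tubular neighborhood, and $\cos(gs)=\sum_{k\ge0}(-1)^k g^{2k}(s^2)^k/(2k)!$ is a smooth function of $s^2$; a symmetric argument near $M_-$ uses $\cos(gs)=-\cos\!\big(g(\tfrac{\pi}{g}-s)\big)$ and the smoothness of $(\tfrac{\pi}{g}-s)^2$. Granting this, $\widetilde F$ is a smooth eigenfunction of $\Delta_{S^{n+1}}$ with eigenvalue $-g(g+n)$, which is precisely the eigenvalue of the degree-$g$ spherical harmonics on $S^{n+1}$, i.e., of the restrictions to $S^{n+1}$ of homogeneous harmonic polynomials of degree $g$ on ${\bf R}^{n+2}$; hence there is such a polynomial $F$ with $F|_{S^{n+1}}=\widetilde F$, and in particular $\Delta_2F=\Delta F=0$. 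For the first equation, on $S^{n+1}$ write ${\rm grad}_{{\bf R}^{n+2}}F={\rm grad}_{S^{n+1}}\widetilde F+(\partial F/\partial r)\,\partial_r$, where $\partial F/\partial r=gF=g\widetilde F$ on $S^{n+1}$ by Euler's relation, so $|{\rm grad}\,F|^2=g^2(1-\widetilde F^2)+g^2\widetilde F^2=g^2$ on $S^{n+1}$; since $|{\rm grad}\,F|^2$ is homogeneous of degree $2g-2$, it equals $g^2r^{2g-2}$ everywhere, i.e., $\Delta_1F=g^2r^{2g-2}$. Finally, since $\widetilde F=F|_{S^{n+1}}$ is constant on each parallel hypersurface, $M$ is an open subset of a level set of $F|_{S^{n+1}}$, which completes the proof.
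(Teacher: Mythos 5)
Your local computations are all correct, and your strategy --- setting $\widetilde F=\cos(gs)$ for $s$ the distance to a focal submanifold, verifying $|{\rm grad}^S\widetilde F|^2=g^2(1-\widetilde F^2)$ and $\Delta^S\widetilde F=-g(g+n)\widetilde F$ via the tube formulas and the identity $\sum_{j=0}^{g-1}\cot(\alpha+j\pi/g)=g\cot(g\alpha)$, and then invoking the spectral characterization of degree-$g$ spherical harmonics to produce the harmonic homogeneous polynomial $F$ --- is an attractive variant of the construction the paper outlines for the general case (the function $V=\cos(g\tau)$ of equation \eqref{eq:1-V} and its homogeneous extension \eqref{eq:1-def-F}). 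In particular, the eigenvalue argument elegantly replaces the lengthy direct verification that the homogeneous extension is a polynomial, which is precisely the step the paper omits and refers to \cite{Mu} and \cite{CR8} for. The recovery of $|{\rm grad}\,F|^2=g^2r^{2g-2}$ from Euler's relation and homogeneity is also fine.

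The difficulty is that the global input your argument requires is exactly what the standard development deduces \emph{from} the polynomial, so as written there is a circularity. Two concrete gaps. First, the opening reduction (``I may enlarge $M$ to a complete, hence compact, isoparametric hypersurface'') is unjustified: the statement that a connected isoparametric hypersurface lies in a compact one is Remark \ref{rem:compact-connected}, obtained as a consequence of Theorems \ref{thm:1-Muenzner-1} and \ref{thm:1-connectedness}, i.e., of the polynomial whose existence you are proving. Your spectral argument is intrinsically global --- a local solution of $\Delta^S u=-g(g+n)u$ need not be the restriction of a polynomial --- so this reduction cannot be dropped and needs an independent proof (say via real-analyticity and continuation of the parallel family). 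Second, for $\widetilde F=\cos(gs)$ to be a well-defined smooth function on all of $S^{n+1}$ you need $M_+$ to be an embedded closed submanifold (a priori the focal map only factors through an \emph{immersion} of the leaf space), you need the distance to $M_+$ to agree everywhere with the normal-geodesic parameter of the parallel family, and you need its cut locus to be exactly $M_-$ at distance $\pi/g$ --- that is, the ball-bundle decomposition of Theorem \ref{thm:1-ball-bundles}, which the paper again derives from the polynomial. Your smoothness argument via the squared distance function is correct once this structure is available, but the structure itself must be established beforehand; M\"{u}nzner sidesteps all of this by working on a small open set $\widetilde U=E(U)$ where the normal exponential map is a diffeomorphism, proving the differential equations there, and only then globalizing through the rigidity of polynomials.
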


Cartan's Theorem \ref{thm:1-Cartan-1} was
a forerunner of M\"{u}nzner's general result (see Theorem \ref{thm:1-Muenzner-1} below)
that every isoparametric hypersurface with $g$ distinct principal curvatures
is algebraic (regardless of the multiplicities of the principal curvatures), and its
defining homogeneous polynomial $F$ of degree $g$ satisfies certain conditions on $\Delta_1 F$ and $\Delta_2 F$, 
which generalize the conditions that Cartan found in Theorem \ref{thm:1-Cartan-1}.

\subsection{Hypersurfaces With Three Principal Curvatures}
\label{cartan-case g=3} 

The classification of isoparametric hypersurfaces with $g=3$ distinct principal curvatures in \cite{Car3} is certainly
one of  the 
highlights of Cartan's work on isoparametric hypersurfaces.  In this paper, Cartan gives a complete classification 
of isoparametric hypersurfaces in $S^{n+1}$ with $g=3$ principal curvatures,
and he describes each example in great detail.

Let $M^n \subset S^{n+1} \subset {\bf R}^{n+2}$ be a connected, oriented hypersurface  with field of unit normals $\xi$
having $g=3$ distinct principal curvatures at every point.  The first step in Cartan's classification theorem is to prove the following theorem \cite[pp. 359--360]{Car3}.

\begin{theorem}
\label{thm:isop-hyp-g-3-multiplicities} 
Let $M^n \subset S^{n+1}$  be a connected isoparametric hypersurface with $g=3$ principal curvatures at each point.  Then all of the principal curvatures have the same multiplicity $m$.
\end{theorem}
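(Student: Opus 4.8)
The plan is to use Cartan's formula \eqref{eq:1-car-id} with $c=1$ as the sole input, together with a careful ordering argument on the three distinct principal curvatures. Write the principal curvatures as $\lambda_i = \cot\theta_i$ with $0 < \theta_1 < \theta_2 < \theta_3 < \pi$, and multiplicities $m_1, m_2, m_3$. Cartan's formula gives three equations, one for each $i$:
\begin{equation}
\label{eq:g3-cartan-i}
\sum_{j \neq i} m_j \, \frac{1 + \lambda_i \lambda_j}{\lambda_i - \lambda_j} = 0, \qquad i = 1,2,3.
\end{equation}
The key trick is to re-express the summand in terms of the angles. Using $\lambda_i = \cot\theta_i$, one computes
\[
\frac{1 + \cot\theta_i \cot\theta_j}{\cot\theta_i - \cot\theta_j} = \frac{\cos(\theta_i - \theta_j)}{\sin(\theta_j - \theta_i)} \cdot \frac{\sin\theta_i \sin\theta_j}{\sin\theta_i \sin\theta_j} = \cot(\theta_j - \theta_i),
\]
after simplifying via the angle-subtraction identities (up to the sign bookkeeping coming from $\sin(\theta_i-\theta_j) = -\sin(\theta_j-\theta_i)$). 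So \eqref{eq:g3-cartan-i} becomes the strikingly clean system
\[
\sum_{j \neq i} m_j \cot(\theta_j - \theta_i) = 0, \qquad i = 1,2,3,
\]
with the convention that a negative argument contributes $\cot(\theta_j-\theta_i) = -\cot(\theta_i-\theta_j)$.

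Now I would exploit the structure of this system for $g=3$. Set $a = \theta_2 - \theta_1$ and $b = \theta_3 - \theta_2$, so that $\theta_3 - \theta_1 = a + b$, with $a, b > 0$ and $a + b < \pi$. The three equations read
\begin{align}
\label{eq:g3-system}
& m_2 \cot a + m_3 \cot(a+b) = 0, \\
\notag & -m_1 \cot a + m_3 \cot b = 0, \\
\notag & -m_1 \cot(a+b) - m_2 \cot b = 0.
\end{align}
From the first equation $\cot(a+b) = -(m_2/m_3)\cot a$, and from the second $\cot b = (m_1/m_3)\cot a$. Substituting both into the third equation yields $m_1(m_2/m_3)\cot a - m_2(m_1/m_3)\cot a = 0$, which is automatically satisfied — so the third equation is dependent, as expected, and the real content is the pair of relations for $\cot b$ and $\cot(a+b)$ in terms of $\cot a$. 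The next step is to combine these with the addition formula $\cot(a+b) = \dfrac{\cot a \cot b - 1}{\cot a + \cot b}$. Writing $u = \cot a$ and substituting $\cot b = (m_1/m_3)u$ and $\cot(a+b) = -(m_2/m_3)u$ gives a single algebraic equation in $u$ with the $m_i$ as parameters; clearing denominators produces $-(m_2/m_3)u\,(u + (m_1/m_3)u) = (m_1/m_3)u^2 - 1$, i.e. a relation of the form $-\tfrac{m_2}{m_3}\big(1 + \tfrac{m_1}{m_3}\big)u^2 = \tfrac{m_1}{m_3}u^2 - 1$. The $u^2$-coefficients must then force $m_1 = m_2 = m_3$: indeed, rearranging gives $u^2\big(\tfrac{m_1}{m_3} + \tfrac{m_2}{m_3} + \tfrac{m_1 m_2}{m_3^2}\big) = 1$, and a second, independent such relation (obtained by running the same elimination starting from a different one of the three original equations, or by using the cyclic symmetry $\theta_i \mapsto \theta_{i+1}$) pins down $u^2$ and forces the coefficient combinations to coincide, yielding $m_1 = m_2 = m_3$.

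The step I expect to be the genuine obstacle is making the final elimination genuinely force equality of all three multiplicities rather than merely one pairwise equality — the $g=3$ system has enough freedom that one must extract two independent constraints, and care is needed to verify they are not proportional. The cleanest route is probably to observe that the equations $\cot b = (m_1/m_3)\cot a$ and $\cot(a+b) = -(m_2/m_3)\cot a$, together with the analogous relations obtained by cycling the roles of the $\theta_i$ (equivalently, by also choosing the opposite orientation $\xi \mapsto -\xi$, which replaces each $\lambda_i$ by $-\lambda_i$ and each $\theta_i$ by $\pi - \theta_i$, reversing the order), give a symmetric enough overdetermined system that the only consistent solution has $m_1 = m_2 = m_3 =: m$ and, as a bonus, $a = b = \pi/3$, recovering the angle spacing of Theorem~\ref{thm:1-prin-curv-isop-hyp-cartan}. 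One should also record the trivial but necessary observation that all $m_i \geq 1$, so no degenerate cancellation of the form $m_i = 0$ occurs. The routine trigonometric simplifications I would relegate to a short computation, citing \cite[pp. 91--96]{CR8} or \cite[pp. 359--360]{Car3} for the details.
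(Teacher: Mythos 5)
Your reduction of Cartan's formula \eqref{eq:1-car-id} (with $c=1$) to the system $\sum_{j\neq i} m_j \cot(\theta_j-\theta_i)=0$ is correct, and your observation that the third equation is dependent on the first two is also correct --- but that dependency is precisely what kills the approach. The elimination you carry out produces exactly one relation,
\begin{equation*}
\cot^2(\theta_2-\theta_1)\,\bigl(m_1m_3+m_2m_3+m_1m_2\bigr)=m_3^2,
\end{equation*}
and this merely \emph{determines the angle spacing} in terms of the multiplicities; it places no constraint whatsoever on $m_1,m_2,m_3$. There is no ``second, independent relation'' to be extracted: starting the elimination from a different pair of the three equations reproduces the same relation (the system has rank two), and replacing $\xi$ by $-\xi$ multiplies each instance of \eqref{eq:1-car-id} by $-1$, so it yields nothing new. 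Concretely, take $m_1=m_2=1$, $m_3=2$, $\cot(\theta_2-\theta_1)=2/\sqrt{5}$, $\cot(\theta_3-\theta_2)=1/\sqrt{5}$, $\cot(\theta_3-\theta_1)=-1/\sqrt{5}$ (so $\theta_2-\theta_1\approx 48.2^\circ$, $\theta_3-\theta_2\approx 63.4^\circ$): all three instances of Cartan's formula are satisfied with unequal multiplicities. So Cartan's formula alone cannot prove the theorem; it only encodes the vanishing of the \emph{trace} of the shape operators of the focal submanifolds, which is too weak.

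The actual proofs use strictly more input. Cartan's own argument, to which Subsection \ref{cartan-case g=3} defers, is a moving-frames computation exploiting the full Codazzi structure equations under the hypothesis $g=3$, not just the trace identity. Alternatively, within this paper's framework the clean route is M\"{u}nzner's Corollary \ref{cor:1-prin-curv-focal-submanifold}: the \emph{entire spectrum} (eigenvalues with multiplicities, not merely the trace) of $A_\eta$ on a focal submanifold is independent of the unit normal $\eta$, so comparing $A_\eta$ with $A_{-\eta}$ forces $m_{i+1}=m_{i-1}$ as in the proof of Theorem \ref{thm:1-prin-curv-isop-hyp}, and for odd $g$ this gives equality of all multiplicities (Corollary \ref{cor:1-multiplicities-isop-hyp}). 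If you want a self-contained argument, that is the one to write out; the route through \eqref{eq:1-car-id} alone is a dead end.
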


Cartan proves this using the method of moving frames and conditions on the structural formulas that follow from the isoparametric condition and the assumption that $g=3$, and we refer the reader to Cartan's paper
\cite[pp. 359--360]{Car3} for the proof.

Next, since all of the multiplicities are equal, 
Cartan's Theorem \ref{thm:1-Cartan-1} \cite[pp. 364-367]{Car3}
above implies that $M^n$ is an open subset of a level set of the restriction to $S^{n+1}$ of a harmonic
homogeneous polynomial $F$ 
on ${\bf R}^{n+2}$ of degree $g=3$ satisfying the differential equations stated in Theorem \ref{thm:1-Cartan-1}.

Cartan's approach to classifying isoparametric hypersurfaces with
$g=3$ 
is to try to directly determine all homogeneous harmonic polynomials $F$ of degree $g=3$ that satisfy the differential equation from Theorem \ref{thm:1-Cartan-1},

\begin{equation}
\label{eq:1-Muenzner-diff-eq-1-a}
\Delta_1 F = |{\mbox{\rm grad }}F|^2 = g^2 r^{2g-2} = g^2 (x_1^2 + x_2^2 + \cdots + x_{n+2}^2)^{{g-1}}.
\end{equation}
After a long calculation, Cartan shows that the algebraic determination of such a harmonic polynomial requires the possibility of solving
the algebraic problem:\\

\noindent
{\bf Problem 1:} {\it Represent the product
\begin{displaymath}
(u_1^2 + u_2^2 + \cdots + u_m^2) (v_1^2 + v_2^2 + \cdots + v_m^2)
\end{displaymath}
of two sums of $m$ squares by the sum of the squares of $m$ bilinear combinations of the $u_i$ and the $v_j$}.\\

\noindent
By a theorem of Hurwitz \cite{Hurwitz} on normed linear algebras, this is only possible if $m = 1, 2, 4,$ or 8.

Cartan \cite[pp. 340--341]{Car3} relates the solution of Problem 1 above to the theory of Riemannian spaces admitting an isogonal absolute parallelism, and he uses results from a joint paper of himself and J. A. Schouten 
\cite{cartan-schouten} on that topic to determine the possibilities for $F$.

Ultimately, Cartan \cite{Car3} shows that the required harmonic homogeneous polynomial $F$ of degree 3 
on ${\bf R}^{3m+2}$ must be of the form
$F(x,y,X,Y,Z)$  given by

\begin{equation}
\label{eq:1-Cartan-poly-g-3}
x^3 - 3xy^2 + \frac{3}{2} x(X \overline{X} + Y \overline{Y} - 2 Z \overline{Z}) + \frac{3 \sqrt{3}}{2}
y (X\overline{X} - Y \overline{Y}) + \frac{3 \sqrt{3}}{2} (XYZ + \overline{Z} \overline{Y} \overline{X}).
\end{equation}
In this formula, $x$ and $y$ are real parameters, while $X,Y,Z$ are coordinates in the division algebra
${\bf F} = {\bf R}, {\bf C}, {\bf H}$ (quaternions), ${\bf O}$ (Cayley numbers),
for $m = 1,2,4,8$,
respectively.  

Note that the sum $XYZ + \overline{Z} \overline{Y} \overline{X}$ is twice the real part
of the product $XYZ$.  In the case of the Cayley numbers, multiplication is not associative, but the
real part of $XYZ$ is the same whether one interprets the product as $(XY)Z$ or $X(YZ)$. 

The isoparametric
hypersurfaces in the family are the level sets $M_t$ in $S^{3m+1}$ determined by the equation
$F = \cos 3t$, $0 < t < \pi/3$, where $F$ is the polynomial in
equation \eqref{eq:1-Cartan-poly-g-3}.  The focal submanifolds are obtained by taking $t=0$ and $t= \pi/3$.  These 
focal submanifolds are
a pair of antipodal standard embeddings of the projective plane
${\bf FP}^2$, for the appropriate division
algebra ${\bf F}$ listed above
(see, for example, \cite[pp. 74--78]{CR8}).  In the case of ${\bf F} = {\bf R}$,
these focal submanifolds are standard Veronese surfaces in $S^4 \subset {\bf R}^5$. 

For the cases
${\bf F} = {\bf R}, {\bf C}, {\bf H}$, Cartan gave a specific parametrization of the focal submanifold $M_0$
defined by the condition $F=1$ as follows:
\begin{eqnarray}
\label{eq:standard-embedding-focal-set}
X = \sqrt{3} v \overline{w}, \quad Y = \sqrt{3} w \overline{u}, \quad Z = \sqrt{3} u \overline{v},\\
x = \frac{\sqrt{3}}{2}(|u|^2 - |v|^2), \quad y = |w|^2 - \frac{|u|^2 + |v|^2}{2},\nonumber
\end{eqnarray}
where $u,v,w$ are in ${\bf F}$, and $|u|^2 + |v|^2 + |w|^2 = 1$.  This map is invariant under the
equivalence relation
\begin{displaymath}
(u,v,w) \sim (u\lambda,v\lambda,w\lambda), \quad \lambda \in {\bf F},\  |\lambda| = 1.
\end{displaymath}
Thus, it is well-defined on ${\bf FP}^2$, and it is easily shown to be injective on ${\bf FP}^2$. Therefore, it is
an embedding of ${\bf FP}^2$ into $S^{3m+1}$. 

 Cartan states that he does not know of an analogous representation of the focal variety in the case $m = 8$. 
 In that case, if  $u, v, w$ are taken to be Cayley numbers, then
the formulas \eqref{eq:standard-embedding-focal-set} are not well defined on the Cayley projective plane.
For example, the product $v \overline{w}$ is not preserved, in general, if we replace $v$ by  $v\lambda$, and $w$ by $w\lambda$, where $\lambda$ is a unit Cayley number.  

Note that even though there is no parametrization corresponding to \eqref{eq:standard-embedding-focal-set},
the Cayley projective plane can be described (see, for example,  Kuiper \cite{Ku3}, Freudenthal \cite{Freu})
as the submanifold
\begin{displaymath}
V = \{A \in M_{3 \times 3} ({\bf O}) \mid \overline{A}^T = A = A^2,\ {\rm trace}\ A = 1 \},
\end{displaymath}
where $M_{3 \times 3} ({\bf O})$ is the space of $3 \times 3$ matrices of Cayley numbers.  This submanifold
$V$ lies in a sphere $S^{25}$ in a $26$-dimensional real subspace of $M_{3 \times 3} ({\bf O})$.

In his paper, Cartan \cite[pp. 342--358]{Car3} writes a separate section for each of the cases $m = 1,2,4,8$, that is, ${\bf F} = {\bf R}, 
{\bf C}, {\bf H},{\bf O}$.  He gives extensive details in each case, describing many remarkable properties, especially in the case $m=8$. 
In particular, he shows that in each case the isoparametric hypersurfaces and the two focal
submanifolds are homogeneous, that is, they are orbits of points in $S^{n+1}$ under the action
of a closed subgroup of $SO(n+2)$.

Cartan's results show that up to congruence,
there is only one isoparametric family of hypersurfaces with $g=3$ principal curvatures for each value of $m$.
This classification is closely related to various characterizations of the standard embeddings of ${\bf FP}^2$.
(See Ewert \cite{Ewert}, Little \cite{Little}, and Knarr-Kramer \cite{K-K}.)

For alternative proofs of Cartan's 
classification of isoparametric hypersurfaces with $g=3$ principal curvatures,
see the papers of Knarr and Kramer \cite{K-K}, and Console and Olmos \cite{Console-Olmos-98}.
In a related paper, Sanchez \cite{Sanchez} studied Cartan's isoparametric hypersurfaces from an algebraic point of view. 
(See also the paper of Giunta and Sanchez \cite{Giunta-Sanchez-2014}.)

\subsection{Hypersurfaces With Four Principal Curvatures}
\label{cartan-case g=4} 

In his fourth paper on the subject, Cartan \cite{Car5} continues
his study of isoparametric hypersurfaces with the property that all the principal curvatures have the same multiplicity $m$.
In that paper, Cartan produces a family of 
isoparametric hypersurfaces with $g=4$ principal curvatures of 
multiplicity $m=1$ in $S^5$,
and a family with $g=4$ principal curvatures of multiplicity $m=2$ in $S^9$.  

In both cases, Cartan describes the geometry and topology of the hypersurfaces and the two focal submanifolds in great detail, pointing out several notable properties that they have.  
(See also the papers of Nomizu \cite{Nom3}--\cite{Nom4} and the book \cite[pp.155--159]{CR8} for more detail
on the example with $m=1$.)  Cartan also proves that the hypersurfaces and the focal submanifolds are homogeneous in both cases,  $m=1$ and $m=2$.

It is worth noting that Cartan's example with four principal curvatures of multiplicity $m=1$ is the lowest dimensional example of an isoparametric 
hypersurface of FKM-type constructed by Ferus, Karcher and M\"{u}nzner \cite {FKM} using representations of Clifford
algebras. Furthermore,
Cartan's example with four principal curvatures of multiplicity $m=2$ is not of FKM-type.  Its principal curvatures do not 
have the correct multiplicities for a hypersurface of FKM-type (see \cite {FKM}).

Cartan says that he can show by a very long calculation (which he does not give) that for an isoparametric hypersurface
with $g=4$ principal curvatures having the same multiplicity $m$, the only possibilities are $m=1$ and $m=2$.  This was shown to be true
later by Grove and Halperin \cite{GH}.
Furthermore, up to congruence, Cartan's examples
given in  \cite{Car5} are the only possibilities for  $m=1$ and $m=2$.  This was shown to be true 
by Takagi \cite{Takagi} in the case $m=1$, and by Ozeki and Takeuchi \cite{OT}--\cite{OT2} in the case $m=2$.

\subsection{Cartan's Questions}
\label{cartan-questions} 

Cartan's third paper \cite{Car4} is basically a survey of the results that he obtained in the other
three papers. As we have noted,
all of the examples that Cartan found
are homogeneous, each being an orbit of a point under an appropriate closed subgroup of $SO(n+2)$.  
Based on his results and the properties of his examples, Cartan asked the following three questions 
at the end of his survey paper \cite{Car4}. All of his questions were answered
in the 1970's, as we will describe below.

\begin{enumerate}
\item For each positive integer $g$, does there exist an isoparametric family with $g$ distinct principal
curvatures of the same multiplicity?\\
\item Does there exist an isoparametric family of hypersurfaces with more than three distinct principal curvatures such
that the principal curvatures do not all have the same multiplicity?\\
\item Does every isoparametric family of hypersurfaces admit a transitive group of isometries?
\end{enumerate}

In the early 1970's, Nomizu
\cite{Nom3}--\cite{Nom4} wrote two papers describing the highlights of Cartan's work.  He also generalized 
Cartan's example with four principal curvatures of multiplicity one to produce examples with four principal 
curvatures having multiplicities $m_1 = m_3 = m$, and $m_2 = m_4 =1$, for any positive integer $m$.  This answered
Cartan's Question 2 in the affirmative.  

In 1972, Takagi and Takahashi \cite{TT} gave a complete classification of all homogeneous 
isoparametric hypersurfaces
in $S^{n+1}$ based on the work of Hsiang and Lawson \cite{HsL}.  Takagi and Takahashi  \cite[p.480]{TT}
showed that each homogeneous
isoparametric hypersurface in $S^{n+1}$ is a principal orbit of the isotropy representation
of a Riemannian symmetric space of 
rank 2, and they gave a complete list of examples.  This list contains examples with $g=6$
principal curvatures as well as those with $g=1,2,3,4$ principal curvatures. In some cases with $g=4$, the principal
curvatures do not all have the same multiplicity, so this also provided an affirmative answer to 
Cartan's Question 2.  

At about the same time as the papers of Nomizu and Takagi-Takahashi, M\"{u}nzner \cite{Mu}--\cite{Mu2}
published two preprints that 
greatly extended Cartan's work and have served as the basis for much of the research in the field since that
time. The preprints were eventually published as journal articles \cite{Mu}--\cite{Mu2} in 1980--1981, and they are the basis of Section \ref{munzner-work} of this paper below.

One of M\"{u}nzner's primary results is that the number $g$ of distinct principal curvatures of an isoparametric hypersurface in a sphere must be equal to $1,2,3,4$ or 6,
and thus the answer to Cartan's Question 1 is negative.

Finally, the answer to Cartan's Question 3 is also negative, as was first shown by the construction of 
some families of inhomogeneous
isoparametric hypersurfaces with $g=4$ principal curvatures
by Ozeki and Takeuchi \cite{OT} in 1975.  Their construction was then generalized in 1981 to yield even more inhomogeneous examples by 
Ferus, Karcher and M\"{u}nzner \cite{FKM}.

\begin{remark}[Isoparametric submanifolds of codimension greater than one]
\label{rem:isop-subm-codim-greater-than-1}

\noindent
{\rm There is also an extensive theory of 
isoparametric submanifolds
of codimension greater than one in the sphere, due primarily to Carter and West  \cite{CW6}--\cite{CW8}, West \cite{West1},
Terng  \cite{Te1}--\cite{Te5}, and 
Hsiang, Palais and Terng \cite{HPT}.
(See also Harle \cite{Har} and 
Str\"{u}bing \cite{Str}.)

Terng \cite{Te1} formulated the definition as follows:
a connected, complete submanifold $V$ in a real space form $\widetilde{M}^{n+1}$ is said to be {\em isoparametric} if it has 
flat normal bundle
and if for any parallel section of the unit normal bundle $\eta:V \rightarrow B^n$, the principal curvatures of
$A_\eta$ are constant. 

After considerable development of the theory, Thorbergsson \cite{Th5}
showed that a compact, irreducible isoparametric submanifold $M$ substantially embedded in $S^{n+1}$ with codimension greater than one is homogeneous.
Thus, $M$ is a principal orbit 
of an isotropy representation of a symmetric space (also called $s$-representations), 
as in the codimension one case.  Orbits of isotropy representations of symmetric spaces are 
also known as {\em generalized flag manifolds} or {\em $R$-spaces}. (See Bott-Samelson \cite{BS} and Takeuchi-Kobayashi \cite{TK}).} 
\end{remark}

\section{M\"{u}nzner's Work}
\label{munzner-work} 

In this section, we cover the work of M\"{u}nzner in his two papers \cite{Mu}--\cite{Mu2}.  
We will give detailed proofs of several of the main results.
Our treatment is based 
primarily on M\"{u}nzner's first paper \cite{Mu} and on the 
book of Cecil and Ryan \cite[pp. 102--137]{CR8}.  Some passages in this section are taken directly from that book.

\subsection{Principal Curvatures of Parallel Hypersurfaces}
\label{sec:1.2}
We now begin our treatment of M\"{u}nzner's theory, which is contained in Subsections \ref{sec:1.2}--\ref{sec:1.6} of this paper. We start by deriving the well-known formulas for the principal curvatures of a parallel hypersurface to a given hypersurface.  We include this for the sake of completeness, since these formulas are important in the development of the theory.

For the following local calculations, we consider a connected, oriented isoparametric hypersurface 
$M \subset S^{n+1} \subset {\bf R}^{n+2}$ with field of unit normals $\xi$.  Assume that $M$ has $g$ distinct 
constant principal curvatures at each point, which we label by,
\begin{equation}
\label{eq:1-prin-curv-sph}
\lambda_i = \cot \theta_i, \ 0 < \theta_i < \pi, \ 1 \leq i \leq g,
\end{equation}
where the $\theta_i$ form an increasing sequence, and $\lambda_i$ has multiplicity $m_i$ on $M$.

We denote the corresponding principal distribution by,
\begin{equation}
\label{eq:1-prin-distr}
T_i (x) = \{X \in T_x M \mid AX = \lambda_i X \},
\end{equation}
where $A$ is the shape operator determined by the field of unit normals $\xi$. 

Using the Codazzi equations in the case of multiplicity $m_i > 1$, and by the theory of ordinary differential
equations in the case $m_i = 1$, one can show that each $T_i$ is integrable, i.e., it is
a foliation of $M$ with leaves of dimension $m_i$.
Furthermore, the leaves of the principal foliation $T_i$ corresponding to $\lambda_i$ are open subsets of 
$m_i$-dimensional metric spheres in $S^{n+1}$, and the space of leaves $M/T_i$ is an
$(n-m_i)$-dimensional manifold with the quotient topology (see \cite[pp. 18--32]{CR8} for proofs).

We consider the parallel hypersurface $f_t: M \rightarrow S^{n+1}$ defined by
\begin{equation}
\label{eq:1-parallel-hyp}
f_t (x) = \cos t \  x + \sin t \ \xi(x),
\end{equation}
that is, $f_t (x)$ is the point in $S^{n+1}$ at an oriented distance $t$ along the normal geodesic in $S^{n+1}$
to $M$ through the point $x$.  Note that $f_0$ is the original embedding $f$ defined by $f(x) = x$, and we suppress
the mention of $f$ in equation \eqref{eq:1-parallel-hyp}.

In the following calculations, we show that $f_t$ is an immersion at $x$ if and only if $\cot t$ is not a principal
curvature of $M$ at $x$.  In that case, we then find the principal curvatures of $f_t$ in terms of the principal
curvatures of the original embedding $f$.  

Let $X \in T_xM$.  Then differentiating equation (\ref{eq:1-parallel-hyp}) in the direction $X$, we get
\begin{displaymath}
(f_t)_* X = \cos t \  X + \sin t \ D_X \xi = \cos t \ X - \sin t \ AX = (\cos t \ I - \sin t \ A)X,
\end{displaymath}
where on the right side we are identifying $X$ with its Euclidean parallel translate at $f_t(x)$,
and $D$ is the Euclidean covariant derivative on ${\bf R}^{n+2}$.
If $X \in T_i (x)$,
this yields,
\begin{equation}
\label{eq:1-diff-parallel-hyp-2}
(f_t)_* X = (\cos t - \sin t \cot \theta_i)\  X= \frac{\sin(\theta_i - t)}{\sin \theta_i} X.
\end{equation}
Since $T_x M$ is 
the direct sum of the principal spaces $T_i (x)$, we see that $(f_t)_*$ is injective on $T_x M$, unless 
$t= \theta_i$ (mod $\pi$) for some $i$, that is, unless $f_t (x)$ is a focal point of $(M,x)$.  

Thus, we see that a parallel hypersurface
$f_t M$ is an immersed hypersurface if $t \neq \theta_i$ (mod $\pi$) for any $i$.  In that case, we want to
find the principal curvatures of $f_t M$.  

\begin{theorem}
\label{thm:1-prin-curv-par-hyp} 
Let $M \subset S^{n+1}$ be an oriented isoparametric  hypersurface and let $\lambda = \cot \theta$ be a principal curvature  of multiplicity $m$ on $M$. Suppose that $f_t$ is an immersion of $M$, and let  $x$ be a point in $M$.
Then the parallel hypersurface $f_t M$ has a principal curvature $\tilde{\lambda} = \cot (\theta - t)$
at $f_t(x)$ having the same multiplicity $m$ and (up to parallel translation in ${\bf R}^{n+2}$) the same principal space $T_{\lambda} (x)$ as $\lambda$ at $x$.
\end{theorem}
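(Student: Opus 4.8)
The plan is to exhibit an explicit field of unit normals along the parallel hypersurface and then read off its shape operator by a single differentiation in ${\bf R}^{n+2}$, using the eigenvector identity \eqref{eq:1-diff-parallel-hyp-2} that is already in hand. First I would propose the normal field. Since $f_t(x) = \cos t\, x + \sin t\, \xi(x)$ rotates the orthonormal pair $(x,\xi(x))$ through the angle $t$ inside the plane they span, the natural candidate for a unit normal to $f_t M$ inside $S^{n+1}$ is the companion rotation
\[
\xi_t(x) = -\sin t\, x + \cos t\, \xi(x).
\]
I would then verify that $|\xi_t(x)| = 1$, that $\langle \xi_t(x), f_t(x)\rangle = 0$ so that $\xi_t(x)$ is tangent to $S^{n+1}$ at $f_t(x)$, and that $\langle \xi_t(x), (f_t)_* X\rangle = 0$ for all $X \in T_x M$; the last fact is immediate from \eqref{eq:1-diff-parallel-hyp-2} together with $\langle x, X\rangle = \langle \xi(x), X\rangle = 0$. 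Hence $\xi_t$ is a field of unit normals to the immersed hypersurface $f_t M \subset S^{n+1}$.

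Next I would compute the shape operator $A_t$ of $f_t M$ determined by $\xi_t$, via the fundamental equation \eqref{eq:fundamental-equation} applied to $f_t$. Because $f_t$ is an immersion at $x$, the curve through $f_t(x)$ with velocity $(f_t)_* X$ is the $f_t$-image of a curve through $x$ with velocity $X$, so the derivative of $\xi_t$ along $f_t M$ in the direction $(f_t)_* X$ coincides with the Euclidean derivative $D_X \xi_t$. Using $D_X x = X$ and $D_X \xi = -AX$ (the latter used already in the derivation of \eqref{eq:1-diff-parallel-hyp-2}), for $X \in T_i(x)$ one gets
\[
D_X \xi_t = -\sin t\, X - \cos t\, AX = -(\sin t + \cos t \cot\theta_i)\, X = -\frac{\cos(\theta_i - t)}{\sin\theta_i}\, X,
\]
identifying $X$ with its parallel translate at $f_t(x)$. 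A brief check shows $D_X\xi_t$ has no component along $f_t(x)$ (pair with $f_t(x)$ and use that $\xi_t$ is normal to $(f_t)_* T_x M$) and none along $\xi_t$ (since $|\xi_t|$ is constant), so in fact $D_X\xi_t = -(f_t)_*(A_t X)$ with no correction terms. Since $(f_t)_*$ acts on $T_i(x)$ as multiplication by the nonzero scalar $\sin(\theta_i - t)/\sin\theta_i$ by \eqref{eq:1-diff-parallel-hyp-2}, comparing the two displays forces $A_t X = \cot(\theta_i - t)\, X$ on $T_i(x)$.

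Finally, reading off the conclusion: for each $i$, every vector in $(f_t)_* T_i(x)$ is a principal vector of $A_t$ with principal curvature $\cot(\theta_i - t)$ and multiplicity $m_i$, and, identifying via parallel translation in ${\bf R}^{n+2}$, $(f_t)_* T_i(x) = T_i(x)$. Specializing to the given $\lambda = \cot\theta = \cot\theta_i$ of multiplicity $m = m_i$ yields the principal curvature $\tilde\lambda = \cot(\theta - t)$ with the same multiplicity and the same principal space $T_\lambda(x)$, as claimed. I expect the only place needing genuine care is the connection bookkeeping in the middle step — confirming that $D_X\xi_t$ equals $-(f_t)_*(A_t X)$ exactly, with no component along $f_t(x)$ or along $\xi_t$ — and the attendant sign tracking, which is precisely what singles out the orientation $\xi_t$ chosen above; the opposite choice $-\xi_t$ would produce the principal curvature $\cot(t - \theta)$ instead.
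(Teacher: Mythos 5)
Your proposal is correct and follows essentially the same route as the paper: the same companion normal field $\tilde{\xi} = -\sin t\, x + \cos t\, \xi$, the same Euclidean differentiation giving $-\cos(\theta-t)/\sin\theta$ times the eigenvector, and the same comparison with \eqref{eq:1-diff-parallel-hyp-2} to extract $A_t X = \cot(\theta - t)X$. The extra checks you flag (that $\tilde{\xi}$ is genuinely normal and that $D_X\tilde{\xi}$ has no components along $f_t(x)$ or $\tilde{\xi}$) are sound and only make explicit what the paper leaves implicit.
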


\begin{proof}
 Let $\xi$ be the field of unit normals on $M$, and denote the value $\xi$ at a point $y \in M$ by $\xi_y$. Then 
we can easily compute that the vector
\begin{equation}
\label{eq:1-tilde-xi}
\tilde{\xi}_y = - \sin t\  y + \cos t\  \xi_y,
\end{equation}
when translated to $\tilde{y} = f_t (y)$, is a unit normal to the hypersurface $f_t M$ at the point $\tilde{y}$.  We want to find
the shape operator $A_t$ determined by this field of unit normals $\tilde{\xi}$ on $f_tM$.  Let $X \in T_{\lambda} (x)$.
Since $\langle X,\xi \rangle = 0$,
we have 
\begin{equation}
\label{eq:1-cov-deriv}
D_X \xi = \widetilde{\nabla}_X \xi - \langle X,\xi \rangle\  x = \widetilde{\nabla}_X \xi = - AX = -\lambda X = - \cot \theta\  X,
\end{equation}
where $D$ is the Euclidean covariant derivative on ${\bf R}^{n+2}$, and $\widetilde{\nabla}$ is the induced
Levi-Civita connection on $S^{n+1}$. By definition, the shape operator $A_t$ is given by
\begin{equation}
\label{eq:1-shape-op-A-t}
(f_t)_* (A_t X) = - \widetilde{\nabla}_{(f_t)_*X} \tilde{\xi} = - D_{(f_t)_*X} \tilde{\xi},
\end{equation}
since $\langle (f_t)_*X, \tilde{\xi} \rangle = 0$.
To compute this, let $x_u$ be a curve in $M$ with initial point
$x_0 = x$ and initial tangent vector $\overrightarrow{x_0} = X$.  Then we have using equation (\ref{eq:1-cov-deriv}),

\begin{eqnarray}
\label{eq:1-differential-tilde-xi}
D_{(f_t)_*X} \tilde{\xi} = \frac{d}{du} (\tilde{\xi}_{x_u})|_{u=0} & = & - \sin t\  X - \cos t \cot \theta\  X  \\
& = & \frac{- \cos (\theta - t)}{\sin \theta} X,\nonumber
\end{eqnarray}
where we again identify $X$ on the right with its Euclidean parallel translate at $f_t(x)$.  If we compare this with
equation (\ref{eq:1-diff-parallel-hyp-2}) and use equation (\ref{eq:1-shape-op-A-t}), we see that 
\begin{equation}
\label{eq:1-A-t-X}
A_t X = \cot (\theta - t) X,
\end{equation}
for $X \in T_{\lambda} (x)$, and so $\tilde{\lambda} = \cot (\theta - t)$ is a principal curvature of $A_t$
with the same principal space $T_{\lambda} (x)$ and same multiplicity $m$ as $\lambda$ at $x$.
\end{proof}

As a consequence of Theorem \ref{thm:1-prin-curv-par-hyp}, we get the following corollary regarding the
principal curvatures of a family of parallel isoparametric hypersurfaces.

\begin{corollary}
\label{cor:1-prin-curv-isop-hyp} 
Let $M \subset S^{n+1}$ be a connected isoparametric hypersurface having $g$ distinct principal curvatures
$\lambda_i = \cot \theta_i$, $1 \leq i \leq g$, with respective multiplicities $m_i$.  If $t$ is any real
number not congruent to any $\theta_i$ (mod $\pi$), then the map $f_t$ immerses $M$ as an isoparametric hypersurface
with principal curvatures $\tilde{\lambda_i} =\cot (\theta_i - t)$, $1 \leq i \leq g$, with the same multiplicities $m_i$.
Furthermore, for each $i$, the principal foliation corresponding to $\tilde{\lambda}_i$ is the same as
the principal foliation $T_i$ corresponding to $\lambda_i$ on $M$. 
\end{corollary}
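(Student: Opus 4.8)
The plan is to deduce the corollary directly from Theorem~\ref{thm:1-prin-curv-par-hyp}, applying it once for each of the $g$ distinct principal curvatures $\lambda_i$ and then assembling the results using the fact that $T_xM$ is spanned by the principal spaces. First I would record that the hypothesis that $t$ is not congruent mod $\pi$ to any $\theta_i$ guarantees, via equation~\eqref{eq:1-diff-parallel-hyp-2}, that $(f_t)_*$ is injective on each $T_i(x)$ and hence on all of $T_xM$; so $f_t$ is an immersion at every point of $M$, and its shape operator $A_t$ (with respect to the unit normal field $\tilde\xi$ of equation~\eqref{eq:1-tilde-xi}) is defined everywhere on $M$.

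Next, fix $x\in M$. Since the shape operator $A$ of $M$ is symmetric, $T_xM$ is the orthogonal direct sum $T_1(x)\oplus\cdots\oplus T_g(x)$. Applying Theorem~\ref{thm:1-prin-curv-par-hyp} with $\lambda=\lambda_i$, we learn that $A_t$ has $\tilde\lambda_i=\cot(\theta_i - t)$ as a principal curvature whose principal space contains the Euclidean parallel translate of $T_i(x)$, with multiplicity at least $m_i$. The $g$ numbers $\cot(\theta_i-t)$ are pairwise distinct, since $\cot$ has period $\pi$, so $\cot(\theta_i-t)=\cot(\theta_j-t)$ would force $\theta_i\equiv\theta_j\pmod\pi$, impossible for $0<\theta_i,\theta_j<\pi$ unless $i=j$. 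Because $\sum_i m_i=n=\dim(f_tM)$, these $g$ distinct eigenvalues exhaust the spectrum of $A_t$, each occurring with exactly multiplicity $m_i$ and with eigenspace exactly the parallel translate of $T_i(x)$. As $\theta_i$ and $t$ are constants, the $\tilde\lambda_i$ are constant on $f_tM$, so $f_tM$ is isoparametric with $g$ distinct principal curvatures of multiplicities $m_i$.

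For the claim about the principal foliations, I would invoke formula~\eqref{eq:1-diff-parallel-hyp-2} once more: for $X\in T_i(x)$ we have $(f_t)_*X=\bigl(\sin(\theta_i-t)/\sin\theta_i\bigr)X$ under parallel translation, a nonzero scalar multiple of $X$. Hence $(f_t)_*$ carries the distribution $T_i$ on $M$ isomorphically onto the principal distribution of $\tilde\lambda_i$ on $f_tM$; equivalently, the pullback under $f_t$ of the principal foliation of $f_tM$ associated with $\tilde\lambda_i$ is precisely $T_i$, and the leaf through $f_t(x)$ is the image under $f_t$ of the leaf of $T_i$ through $x$.

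There is no serious obstacle here; the corollary is essentially bookkeeping layered on top of Theorem~\ref{thm:1-prin-curv-par-hyp}. The only point worth a moment's care is the pairwise distinctness of the $\cot(\theta_i-t)$, which is exactly what prevents two distinct principal curvatures of $M$ from merging into a single principal curvature of $f_tM$ and thereby altering either the count $g$ or the multiplicities; this is immediate from the $\pi$-periodicity of $\cot$ together with the constraint $0<\theta_i<\pi$.
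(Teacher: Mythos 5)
Your proposal is correct and follows exactly the route the paper intends: the corollary is stated there as an immediate consequence of Theorem~\ref{thm:1-prin-curv-par-hyp}, and your write-up supplies precisely the expected bookkeeping (applying the theorem to each $\lambda_i$, using the orthogonal decomposition of $T_xM$ into the $T_i(x)$, checking that the $\cot(\theta_i-t)$ are pairwise distinct, and counting multiplicities to see that the spectrum of $A_t$ is exhausted). No gaps.
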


\begin{remark}
{\rm It follows from M\"{u}nzner's theory that if $M$ is an isoparametric hypersurface embedded in $S^{n+1}$, then each parallel
isoparametric hypersurface $f_tM$ is also embedded in $S^{n+1}$ and not just immersed. This is because $M$ and its
parallel hypersurfaces are level sets of the restriction to $S^{n+1}$ of a certain polynomial function on
${\bf R}^{n+2}$, as will be discussed later
in Theorem \ref{thm:1-Muenzner-1}.}
\end{remark}

\subsection{Focal Submanifolds}
\label{sec:1.3}
The geometry of the focal submanifolds is a crucial element in the theory of isoparametric hypersurfaces.
In this subsection, we obtain some important basic results about isoparametric hypersurfaces and their focal submanifolds
due to M\"{u}nzner \cite{Mu}.

We use the notation introduced in Subsection \ref{sec:1.2}.   So we let
$M^n \subset S^{n+1}$ be a connected, oriented isoparametric hypersurface with field of unit normals $\xi$
having $g$ distinct constant principal
curvatures,
\begin{equation}
\label{eq:1-prin-curv-isop-hyp}
\lambda_i = \cot \theta_i, \ 0 < \theta_i < \pi, \ 1 \leq i \leq g,
\end{equation}
where the $\theta_i$ form an increasing sequence. Denote the multiplicity of $\lambda_i$ by $m_i$.

As noted earlier, the leaves of the principal foliation $T_i$ corresponding to $\lambda_i$ are open subsets of 
$m_i$-dimensional metric spheres in $S^{n+1}$, and the space of leaves $M/T_i$ is an
$(n-m_i)$-dimensional manifold with the quotient topology (see \cite[pp. 18--32]{CR8} for proofs).

We consider the map 
$f_t$ defined as in equation \eqref{eq:1-parallel-hyp} by
\begin{displaymath}
f_t (x) = \cos t \  x + \sin t \ \xi(x),
\end{displaymath}
and we use the formulas developed in Subsection \ref{sec:1.2} for making computations.

If $t = \theta_i$, then
the map $f_t$ has constant rank $n-m_i$ on $M$, and  it factors through an
immersion $\psi_i :M/T_i \rightarrow S^{n+1}$, defined on the space of leaves $M/T_i$,
i.e., $f_t = \psi_i \circ \pi$, where $\pi$ is the projection from $M$ 
onto $M/T_i$. 
Thus, $f_t$ is a focal map, and we denote image of $\psi_i$
by $V_i$.

We now want to find the eigenvalues of the shape operators 
of this focal submanifold $V_i$.  This is similar to the calculation
for parallel hypersurfaces in Subsection \ref{sec:1.2}, although we must make some adjustments because
$V_i$ has codimension greater than one.

Let $x \in M$.  Then we have the orthogonal decomposition of the tangent space $T_xM$ as
\begin{equation}
\label{eq:orthog-decomp}
T_xM = T_i(x) \oplus T_i^\perp (x),
\end{equation}
where $T_i^\perp (x)$ is the direct sum of the spaces $T_j(x)$, for $j \neq i$.  By equation 
(\ref{eq:1-diff-parallel-hyp-2}) in Subsection \ref{sec:1.2}, the map
$(f_t)_* = 0$ on $T_i(x)$, and $(f_t)_*$ is injective on $T_i^\perp (x)$, for $t=\theta_i$.
We consider a map $h:M \rightarrow S^{n+1}$ given by
\begin{equation}
\label{eq:1-defn-h}
h(x) = - \sin t\  x + \cos t\  \xi (x).
\end{equation}
This is basically the same function that we considered in equation (\ref{eq:1-tilde-xi}) for the field $\tilde{\xi}$
of unit normals to $f_tM$, in the case where $f_t$ is an immersion.  

In the case $t=\theta_i$,
we see that the inner product
$\langle f_t(x), h(x) \rangle = 0$, and so the vector $h(x)$ is tangent to the sphere $S^{n+1}$
at the point $p = f_t (x)$.  Furthermore, $\langle h(x), X \rangle = 0$ for all $X \in T_i^\perp (x)$, and so by
equation (\ref{eq:1-diff-parallel-hyp-2}), the vector $h(x)$ is normal to the focal submanifold $V_i$ at the point $p$.

We can use the map $h$ to find the shape operator determined by a normal
vector to $V_i$ as follows.  Let $p$ be an arbitrary point in $V_i$.  Then the set $C = f_t^{-1} (p)$ is an open subset
of an $m_i$-sphere in $S^{n+1}$.  
For each $x \in C$, the vector $h(x)$ is a unit normal to the focal submanifold $V_i$ at $p$.  Thus,
the restriction of $h$ to $C$ is a map from $C$ into the $m_i$-sphere $S_p^\perp V_i$ of unit normal vectors
to $V_i$ at $p$. At a point $x \in C$, the tangent space $T_xC$ equals $T_i (x)$.
Since $t = \theta_i$, we compute for $x \in C$ and a nonzero vector $X \in T_i(x)$ that
\begin{eqnarray}
\label{eq:1-diff-h}
h_* (X)& = &- \sin t X + \cos t (-AX) = - \sin \theta_i X + \cos \theta_i (- \cot \theta_i)X\\
& = & \frac{-1}{\sin \theta_i} X \neq 0.\nonumber
\end{eqnarray}
Thus, $h_*$ has full rank $m_i$ on $C$, and so $h$ is a local diffeomorphism of open subsets of $m_i$-spheres.
This enables us to prove the following important result due to M\"{u}nzner \cite{Mu}.

\begin{theorem}
\label{thm:1-prin-curv-focal-submanifold}
Let $M \subset S^{n+1}$ be a connected isoparametric hypersurface, and let $V_i = f_tM$ for $t=\theta_i$
be a focal submanifold of $M$. Let $\eta$ be a unit normal
vector to $V_i$ at a point $p \in V_i$, and suppose that $\eta = h(x)$ for some $x \in f_t^{-1} (p)$.  Then the
shape operator $A_\eta$ of $V_i$ is given in terms of its principal vectors by
\begin{equation}
\label{eq:1-prin-curv-focal-submanifold}
A_\eta X = \cot (\theta_j - \theta_i)X, \ {\rm for}\ X \in T_j (x), \ j \neq i.
\end{equation}
(As before we are identifying $T_j(x)$ with its Euclidean parallel translate at $p$.)
\end{theorem}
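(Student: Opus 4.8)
The plan is to repeat the computation in the proof of Theorem~\ref{thm:1-prin-curv-par-hyp}, making two adjustments: the oriented distance is now $t=\theta_i$, and $V_i$ has codimension greater than one, so the shape operator $A_\eta$ is obtained by projecting a derivative of the normal field $h$ onto the tangent space of $V_i$. First I would fix $x\in f_t^{-1}(p)$ with $\eta=h(x)$, and recall from \eqref{eq:orthog-decomp} and \eqref{eq:1-diff-parallel-hyp-2} that, for $t=\theta_i$, the differential $(f_t)_*$ annihilates $T_i(x)$ and carries $T_i^\perp(x)=\bigoplus_{j\neq i}T_j(x)$ isomorphically onto $T_pV_i$, with $(f_t)_*X=\tfrac{\sin(\theta_j-\theta_i)}{\sin\theta_j}\,X$ for $X\in T_j(x)$, $j\neq i$ (identifying $X$ with its Euclidean parallel translate at $p$). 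Hence it suffices to compute $A_\eta$ on each vector $(f_t)_*X$ with $X\in T_j(x)$, $j\neq i$, and to check that these are eigenvectors with the claimed eigenvalues.

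Next I would compute the derivative of the normal field. Take a curve $x_u$ in $M$ with $x_0=x$ and $\dot x_0=X\in T_j(x)$. Then $u\mapsto f_t(x_u)$ is a curve in $V_i$ with nonzero velocity $(f_t)_*X$, and $u\mapsto h(x_u)$ is a field of unit normals to $V_i$ along it, since $h(y)$ is normal to $V_i$ at $f_t(y)$ for every $y$ (as noted just before the theorem). The same computation as in \eqref{eq:1-differential-tilde-xi}, now with $AX=\cot\theta_j\,X$ and $t=\theta_i$, gives
\begin{equation*}
D_{(f_t)_*X}\,h=\frac{d}{du}\big(h(x_u)\big)\Big|_{u=0}=-\sin\theta_i\,X-\cos\theta_i\cot\theta_j\,X=\frac{-\cos(\theta_j-\theta_i)}{\sin\theta_j}\,X .
\end{equation*}
Substituting $X=\tfrac{\sin\theta_j}{\sin(\theta_j-\theta_i)}\,(f_t)_*X$ from the formula above, this becomes $D_{(f_t)_*X}\,h=-\cot(\theta_j-\theta_i)\,(f_t)_*X$.

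Finally I would translate this into a statement about the shape operator of the submanifold $V_i$. Since $(f_t)_*X$ is tangent to $V_i$ at $p$ while $h(x)=\eta$ is normal to $V_i$ at $p$, we have $\langle(f_t)_*X,\,h\rangle=0$, so the Gauss formula for $S^{n+1}\subset{\bf R}^{n+2}$ gives $\widetilde{\nabla}_{(f_t)_*X}h=D_{(f_t)_*X}h=-\cot(\theta_j-\theta_i)\,(f_t)_*X$; in particular this vector is already tangent to $V_i$, so its normal component vanishes, and the definition of the shape operator yields $A_\eta\big((f_t)_*X\big)=\cot(\theta_j-\theta_i)\,(f_t)_*X$. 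As $j$ ranges over all indices $\neq i$, the subspaces $(f_t)_*T_j(x)$ span $T_pV_i$, so this proves \eqref{eq:1-prin-curv-focal-submanifold} under the stated identification of $T_j(x)$ with its parallel translate (which is indeed a subspace of $T_pV_i$, being a scalar multiple of $(f_t)_*T_j(x)$). The one point I expect to require care beyond Theorem~\ref{thm:1-prin-curv-par-hyp} is the higher-codimension bookkeeping: confirming that $u\mapsto h(x_u)$ is genuinely a normal field along $f_t(x_u)$, that no normal-connection or second-fundamental-form terms of $V_i$ survive because $\widetilde{\nabla}_{(f_t)_*X}h$ turns out to be purely tangential, and keeping the three incarnations of $X$ — as an element of $T_xM$, as a parallel-translated vector in ${\bf R}^{n+2}$, and as a multiple of $(f_t)_*X\in T_pV_i$ — consistent throughout.
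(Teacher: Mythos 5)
Your proposal is correct and follows exactly the route the paper takes: the paper's proof simply states that the computation from Theorem~\ref{thm:1-prin-curv-par-hyp} carries over verbatim with $t=\theta_i$ and $\theta=\theta_j$, yielding equation \eqref{eq:1-A-t-X} in the form \eqref{eq:1-prin-curv-focal-submanifold}. You have merely written out that computation in full, and your added bookkeeping (that $h(x_u)$ is a normal field along $f_t(x_u)$, that $D_{(f_t)_*X}h$ is already tangential so no normal-connection term survives, and that $(f_t)_*$ identifies $\bigoplus_{j\neq i}T_j(x)$ with $T_pV_i$) is accurate and consistent with the discussion preceding the theorem.
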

\begin{proof}
Let $\eta = h(x)$ for some $x \in C = f_t^{-1} (p)$ for $t = \theta_i$.  The same calculation used in proving
Theorem \ref{thm:1-prin-curv-par-hyp} is valid here, and it leads to equation (\ref{eq:1-A-t-X}), which we write as
in equation \eqref{eq:1-prin-curv-focal-submanifold},
\begin{displaymath}
A_\eta X = \cot (\theta_j - \theta_i)X, \ {\rm for}\ X \in T_j (x), \ j \neq i.
\end{displaymath}
\end{proof}

\begin{corollary}
\label{cor:1-prin-curv-focal-submanifold}
Let $M \subset S^{n+1}$ be a connected isoparametric hypersurface, and let $V_i = f_tM$, for $t=\theta_i$,
be a focal submanifold of $M$. Then for every unit normal vector $\eta$ at every point $p \in V_i$, the
shape operator $A_\eta$ has eigenvalues $\cot (\theta_j - \theta_i)$ with multiplicities $m_j$,
for $j \neq i$, $1 \leq j \leq g$.
\end{corollary}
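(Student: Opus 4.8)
The plan is to reduce the Corollary to Theorem~\ref{thm:1-prin-curv-focal-submanifold} by an analytic continuation over the unit normal sphere of $V_i$ at $p$. Theorem~\ref{thm:1-prin-curv-focal-submanifold} already yields the stated principal curvatures and multiplicities for every unit normal of the special form $\eta = h(x)$ with $x \in C = f_t^{-1}(p)$, $t = \theta_i$; the only thing missing is that, for a merely connected (not necessarily complete) $M$, the image $h(C)$ need not be all of $S_p^\perp V_i$. As a first step I would record that $h(C)$ is at least a \emph{nonempty open} subset of $S_p^\perp V_i$: by equation~\eqref{eq:1-diff-h} the restriction $h|_C$ has rank $m_i = \dim S_p^\perp V_i$ at every point of $C$, so it is an open map.

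The key structural observation is that $\eta \mapsto A_\eta$ is a \emph{linear} map from the normal space of $V_i$ at $p$ (inside $T_p S^{n+1}$) into the self-adjoint operators on $T_p V_i$, since it is dual to the fixed second fundamental form $\mathrm{II}$ of $V_i$ in $S^{n+1}$ at $p$ via $\langle A_\eta X, Y\rangle = \langle \mathrm{II}(X,Y), \eta\rangle$. Hence, after fixing bases, the entries of $A_\eta$, and therefore the coefficients of the characteristic polynomial $\chi_\eta(\lambda) = \det(\lambda I - A_\eta)$, are polynomials in the components of $\eta$.

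By Theorem~\ref{thm:1-prin-curv-focal-submanifold}, for every $\eta$ in the open set $h(C)$ we have $\chi_\eta(\lambda) = P(\lambda) := \prod_{j \neq i}(\lambda - \cot(\theta_j - \theta_i))^{m_j}$, a fixed polynomial (the values $\cot(\theta_j - \theta_i)$, $j \neq i$, being pairwise distinct, since $0 < \theta_1 < \cdots < \theta_g < \pi$ forces the numbers $\theta_j - \theta_i$ to lie in $(-\pi,\pi)$ with no two congruent modulo $\pi$). Each coefficient of $\chi_\eta - P$ is then a polynomial in $\eta$ vanishing on the nonempty open subset $h(C)$ of the sphere $S_p^\perp V_i$, hence vanishing on the open cone over $h(C)$ in the normal space, hence identically zero. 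Therefore $\chi_\eta = P$ for \emph{every} unit normal $\eta$ at $p$, and since $A_\eta$ is self-adjoint (hence diagonalizable) its eigenvalues are exactly the $\cot(\theta_j - \theta_i)$ with eigenspace dimensions equal to the root multiplicities $m_j$ of $P$. As $p \in V_i$ was arbitrary, the Corollary follows.

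The main obstacle is exactly this passage from normals of the form $h(x)$ to all unit normals at $p$: one cannot close the argument by a naive continuity/closedness argument, since a limit of points of the immersed hypersurface $M$ need not lie in $M$, so $h(C)$ need not be closed. The linearity of $A_\eta$ in $\eta$ is what rescues the situation, turning the question into one about polynomials and letting connectedness of the normal sphere finish it. (If one is willing to assume $M$ complete, an alternative is to show that $C = f_t^{-1}(p)$ is a full $m_i$-sphere, so that the local diffeomorphism $h|_C$ is a covering of $S_p^\perp V_i$ and hence onto; but the polynomial argument requires no completeness hypothesis.)
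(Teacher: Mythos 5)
Your proposal is correct and follows essentially the same route as the paper: both exploit the linearity of $\eta \mapsto A_\eta$ to view the coefficients of the characteristic polynomial as polynomials in $\eta$, and then propagate the identity from the nonempty open subset $h(C)$ of $S_p^\perp V_i$ to the whole unit normal sphere (the paper via real-analyticity on the connected sphere, you via vanishing of a polynomial on an open cone, which is the same idea in slightly more elementary dress).
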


\begin{proof}
By Theorem \ref{thm:1-prin-curv-focal-submanifold}, the corollary holds on the open subset $h(C)$ of the 
$m_i$-sphere $S_p^\perp V_i$ of unit normal vectors to $V_i$ at $p$.  Consider the characteristic polynomial
$P_u (\eta) = \det (A_\eta - u I)$ as a function of $\eta$ on the normal space $T_p^\perp V_i$.  
Since $A_\eta$ is linear in $\eta$, we have for each fixed $u \in {\bf R}$ that the function $P_u (\eta)$ is a polynomial
of degree $n-m_i$ on the vector space $T_p^\perp V_i$. 
Thus, the restriction of $P_u (\eta)$ to the
sphere $S_p^\perp V_i$ is an analytic function of $\eta$.  Then since $P_u (\eta)$ is constant on the open subset
$h(C)$ of $S_p^\perp V_i$, it is constant on all of $S_p^\perp V_i$.
\end{proof}

\subsubsection{Minimality of the focal submanifolds}
\label{minimality-focal-submanifolds}

M\"{u}nzner also obtained the following consequence of Corollary \ref{cor:1-prin-curv-focal-submanifold}.  This result was
obtained independently with a different proof by Nomizu \cite{Nom3}.

\begin{corollary}
\label{cor:1-focal-sub-minimal}
Let $M \subset S^{n+1}$ be a connected isoparametric hypersurface.  Then each focal submanifold $V_i$ of $M$ is a
minimal submanifold in $S^{n+1}$.
\end{corollary}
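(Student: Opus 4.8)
The plan is to combine Corollary~\ref{cor:1-prin-curv-focal-submanifold} with Cartan's formula \eqref{eq:1-car-id}. Recall that a submanifold $V$ of $S^{n+1}$ is minimal exactly when its mean curvature vector vanishes at every point, and that at $p \in V$ the mean curvature vector equals (up to a positive scalar factor) $\sum_\alpha (\mathrm{trace}\, A_{\eta_\alpha})\,\eta_\alpha$, where $\{\eta_\alpha\}$ is any orthonormal basis of the normal space $T_p^\perp V$. So it suffices to show that $\mathrm{trace}\, A_\eta = 0$ for every unit normal $\eta$ at every point $p \in V_i$. If $g = 1$ this is vacuous, since then $f_{\theta_1}$ collapses each fiber $T_x M = T_1(x)$ to a point and $V_1$ is a single point; so assume $g \geq 2$. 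By Corollary~\ref{cor:1-prin-curv-focal-submanifold}, for any such $\eta$ and $p$ the shape operator $A_\eta$ has principal curvatures $\cot(\theta_j - \theta_i)$ with multiplicities $m_j$, $j \neq i$, so
\begin{equation*}
\mathrm{trace}\, A_\eta = \sum_{j \neq i} m_j \cot(\theta_j - \theta_i),
\end{equation*}
independent of $\eta$ and of $p$. Thus everything reduces to proving that this sum vanishes.

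This is precisely Cartan's formula rewritten in terms of the angles $\theta_j$. Applying \eqref{eq:1-car-id} with $c = 1$ to the isoparametric hypersurface $M$ --- which is legitimate because $g > 1$, and which imposes no condition on the multiplicities --- gives, for each $i$,
\begin{equation*}
\sum_{j \neq i} m_j\, \frac{1 + \lambda_i \lambda_j}{\lambda_i - \lambda_j} = 0 .
\end{equation*}
Substituting $\lambda_k = \cot \theta_k$ and using $\cot\theta_i - \cot\theta_j = \sin(\theta_j - \theta_i)/(\sin\theta_i \sin\theta_j)$ together with $1 + \cot\theta_i \cot\theta_j = \cos(\theta_j - \theta_i)/(\sin\theta_i \sin\theta_j)$, one gets $(1 + \lambda_i\lambda_j)/(\lambda_i - \lambda_j) = \cot(\theta_j - \theta_i)$, so Cartan's formula becomes exactly $\sum_{j \neq i} m_j \cot(\theta_j - \theta_i) = 0$. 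Hence $\mathrm{trace}\, A_\eta = 0$ for every unit normal $\eta$ at every point of $V_i$, the mean curvature vector of $V_i$ is identically zero, and $V_i$ is a minimal submanifold of $S^{n+1}$.

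There is no genuine obstacle in this argument: Corollary~\ref{cor:1-prin-curv-focal-submanifold} already pins down all the principal curvatures of every normal shape operator of $V_i$, and Cartan's formula \eqref{eq:1-car-id} is available in full generality. The only step needing a short computation is the trigonometric identity converting $(c + \lambda_i\lambda_j)/(\lambda_i - \lambda_j)$ with $c = 1$ into $\cot(\theta_j - \theta_i)$; one should also simply note the degenerate case $g = 1$, where the focal ``submanifold'' is a point and minimality is trivial.
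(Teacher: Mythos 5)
Your argument is mathematically sound, but it takes a genuinely different route from the paper, and one that runs the paper's logic backwards. The paper's proof is entirely self-contained given Corollary~\ref{cor:1-prin-curv-focal-submanifold}: since that corollary says the spectrum of $A_\eta$ (with multiplicities) is the \emph{same} for every unit normal $\eta$ at every point of $V_i$, one may compare $\eta$ with $-\eta$; the linearity $A_{-\eta}=-A_\eta$ gives ${\rm trace}\,A_{-\eta}=-{\rm trace}\,A_\eta$, while the spectrum-independence gives ${\rm trace}\,A_{-\eta}={\rm trace}\,A_\eta$, forcing ${\rm trace}\,A_\eta=0$. No computation of the actual sum $\sum_{j\neq i} m_j\cot(\theta_j-\theta_i)$ is needed. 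You instead evaluate that sum by invoking Cartan's formula \eqref{eq:1-car-id} with $c=1$; your trigonometric conversion of $(1+\lambda_i\lambda_j)/(\lambda_i-\lambda_j)$ into $\cot(\theta_j-\theta_i)$ is correct, and it is exactly the computation the paper performs in Corollary~\ref{cor:Cartan's identity} --- but in the opposite direction.

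The one thing to be careful about is that the paper's \emph{own} internal proof of Cartan's formula is Corollary~\ref{cor:Cartan's identity}, which is deduced \emph{from} the minimality of the focal submanifolds; the paper explicitly remarks that Cartan's formula is ``essentially equivalent to the minimality of the focal submanifolds.'' So within this paper's development, your proof is circular unless you appeal to an independent proof of \eqref{eq:1-car-id} (Cartan's original derivation, or the one in \cite[pp.~91--96]{CR8}, both of which are only cited here, not reproduced). Your argument is therefore legitimate as mathematics, but the paper's $\eta\mapsto -\eta$ symmetry trick is both more elementary and logically prior: it yields minimality with no input beyond Corollary~\ref{cor:1-prin-curv-focal-submanifold}, and Cartan's formula then falls out as a free consequence rather than being a prerequisite. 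Your handling of the degenerate case $g=1$ (focal set a point) is fine.
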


\begin{proof}
Let $\eta$ be a unit normal vector to a focal submanifold $V_i$ of $M$.  Then $-\eta$ is also a unit normal vector to
$V_i$.  By Corollary \ref{cor:1-prin-curv-focal-submanifold}, the shape operators $A_\eta$ and $A_{-\eta}$ have the
same eigenvalues with the same multiplicities.  So 
\begin{displaymath}
{\rm trace}\  A_{-\eta}\  =\  {\rm trace}\  A_\eta.  
\end{displaymath}
On the other hand,
trace $A_{-\eta} = -$ trace $A_{\eta}$, since $A_{-\eta} = - A_{\eta}$.  Thus, we have ${\rm trace}\  A_\eta = -\  {\rm trace}\  A_\eta$,
and so ${\rm trace}\  A_\eta = 0$.  Since this
is true for all unit normal vectors $\eta$, we conclude that $V_i$ is a minimal submanifold in $S^{n+1}$.
\end{proof}

As a consequence of Theorem \ref{thm:1-prin-curv-focal-submanifold}, we can give a proof of Cartan's formula 
(see equation \eqref{eq:1-car-id-2} below)
for isoparametric hypersurfaces in $S^{n+1}$.  The proof shows that Cartan's formula is essentially equivalent to the minimality of the focal submanifolds.
\begin{corollary}
\label{cor:Cartan's identity}
(Cartan's formula) Let $M \subset S^{n+1}$ be a connected isoparametric hypersurface with $g$ principal curvatures
\begin{displaymath}
\lambda_i = \cot \theta_i, \quad 0 < \theta_1 < \cdots < \theta_g < \pi,
\end{displaymath}
with respective multiplicities $m_i$. 
Then for each $i,\  1 \leq i \leq g$, Cartan's formula holds, that is,
\begin{equation}
\label{eq:1-car-id-2}
\sum_{j\neq i} m_j \frac{1 + \lambda_i \lambda_j}{\lambda_i - \lambda_j} = 0.
\end{equation}
\end{corollary}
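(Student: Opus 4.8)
The plan is to derive Cartan's formula \eqref{eq:1-car-id-2} directly from the computation of the shape operator of a focal submanifold in Theorem \ref{thm:1-prin-curv-focal-submanifold}, together with the minimality statement in Corollary \ref{cor:1-focal-sub-minimal}. The key observation, as the sentence preceding the statement already hints, is that Cartan's formula is essentially a restatement of the vanishing of the trace of the shape operator of the focal submanifold $V_i$.

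The steps I would carry out are as follows. First, fix $i$ with $1 \le i \le g$, and consider the focal submanifold $V_i = f_tM$ for $t = \theta_i$. Pick a point $p \in V_i$ and a unit normal $\eta$ to $V_i$ at $p$ of the form $\eta = h(x)$ for some $x \in f_t^{-1}(p)$, which is possible by the discussion preceding Theorem \ref{thm:1-prin-curv-focal-submanifold}. By that theorem, the shape operator $A_\eta$ of $V_i$ has eigenvalue $\cot(\theta_j - \theta_i)$ on the space $T_j(x)$ of dimension $m_j$, for each $j \neq i$, so
\begin{equation*}
{\rm trace}\ A_\eta = \sum_{j \neq i} m_j \cot(\theta_j - \theta_i).
\end{equation*}
By Corollary \ref{cor:1-focal-sub-minimal}, $V_i$ is minimal in $S^{n+1}$, so ${\rm trace}\ A_\eta = 0$ for every unit normal $\eta$; in particular this sum vanishes. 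It then remains to convert this trigonometric identity into the stated algebraic form in the principal curvatures $\lambda_i = \cot\theta_i$. Using the cotangent subtraction formula,
\begin{equation*}
\cot(\theta_j - \theta_i) = \frac{\cos(\theta_j - \theta_i)}{\sin(\theta_j - \theta_i)} = \frac{\cot\theta_i \cot\theta_j + 1}{\cot\theta_i - \cot\theta_j} = \frac{1 + \lambda_i \lambda_j}{\lambda_i - \lambda_j},
\end{equation*}
where I divide numerator and denominator of the standard expansion by $\sin\theta_i \sin\theta_j$. Substituting this into ${\rm trace}\ A_\eta = 0$ gives exactly \eqref{eq:1-car-id-2}.

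I do not anticipate a serious obstacle here, since all the heavy lifting — the shape operator computation and the minimality argument — is already done in Theorem \ref{thm:1-prin-curv-focal-submanifold} and Corollary \ref{cor:1-focal-sub-minimal}. The only points requiring a small amount of care are: (i) confirming that for the chosen $i$ there genuinely is a unit normal of the form $h(x)$, so that Theorem \ref{thm:1-prin-curv-focal-submanifold} applies (this is guaranteed by the surjectivity-onto-an-open-set discussion of the map $h$ before that theorem, combined with the fact that minimality holds for \emph{all} unit normals by Corollary \ref{cor:1-prin-curv-focal-submanifold}); and (ii) the trigonometric-to-algebraic conversion, which is routine but should be displayed so the reader sees that $c = 1$ here produces the ``$1 + \lambda_i\lambda_j$'' in the numerator. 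If one wished to emphasize the conceptual content, one could remark that this rederivation shows Cartan's formula for $S^{n+1}$ is equivalent to the minimality of each focal submanifold, which is the point made in the text just above the corollary.
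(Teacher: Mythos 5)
Your proposal is correct and follows essentially the same route as the paper: the paper likewise computes ${\rm trace}\, A_\eta = \sum_{j\neq i} m_j \cot(\theta_j - \theta_i)$ for the focal submanifold $V_i$ via Corollary \ref{cor:1-prin-curv-focal-submanifold}, invokes minimality from Corollary \ref{cor:1-focal-sub-minimal} to set this to zero, and applies the cotangent subtraction identity to obtain \eqref{eq:1-car-id-2}. The only cosmetic difference is that the paper cites Corollary \ref{cor:1-prin-curv-focal-submanifold} (valid for all unit normals) rather than Theorem \ref{thm:1-prin-curv-focal-submanifold}, which renders your point (i) about choosing $\eta = h(x)$ unnecessary.
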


\begin{proof}
We will show that for each $i$ and for any unit normal $\eta$ to the focal submanifold $V_i$, the
left side of equation (\ref{eq:1-car-id-2}) equals trace $A_\eta$, which equals zero by
Corollary \ref{cor:1-focal-sub-minimal}.  Represent the principal curvatures of $M$ as
$\lambda_ i = \cot \theta_i$, $0 < \theta_1 < \cdots < \theta_g < \pi$, with respective multiplicities $m_i$.
Let $V_i$ be the focal submanifold corresponding to $\lambda_i$.  By
Corollary \ref{cor:1-prin-curv-focal-submanifold}, we have

\begin{eqnarray}
\label{eq:1-trace-shape-op}
0 = {\rm trace}\  A_\eta& = & \sum_{j \neq i} m_j \cot (\theta_j - \theta_i) = 
\sum_{j\neq i} m_j \frac{1 + \cot \theta_i \cot \theta_j}{\cot \theta_i - \cot \theta_j}  \\
& = & \sum_{j\neq i} m_j \frac{1 + \lambda_i \lambda_j}{\lambda_i - \lambda_j}.\nonumber 
\end{eqnarray}
\end{proof}

\subsubsection{Formula for the principal curvatures of $M$}
\label{formula-p-c}

M\"{u}nzner \cite[p. 61]{Mu} showed that the principal curvatures of an
isoparametric hypersurface in $S^{n+1}$
have a very specific form, given in equation \eqref{eq:1-prin-curv-formula} in
Theorem \ref{thm:1-prin-curv-isop-hyp} below. 
Cartan  \cite[pp. 186--187]{Car2} obtained this same formula 
in the case where the principal curvatures all have the same multiplicity (see 
Theorem \ref{thm:1-prin-curv-isop-hyp-cartan}), 
but he did not have the result in the general case.
  
Moreover, M\"{u}nzner's Theorem \ref{thm:1-prin-curv-isop-hyp} also contains the result that in the general case,
the multiplicities of the principal curvatures must satisfy the relation $m_i = m_{i+2}$ (subscripts mod $g$). 

Theorem \ref{thm:1-prin-curv-isop-hyp} is a key 
element in M\"{u}nzner's extension to the general case
of Cartan's Theorem \ref{thm:1-Cartan-1} concerning the algebraic nature of
isoparametric hypersurfaces (see
Theorem \ref{thm:1-Muenzner-1} below).  Our treatment of the proof of
M\"{u}nzner's Theorem \ref{thm:1-prin-curv-isop-hyp} is taken from \cite[pp. 108--110]{CR8}.

\begin{theorem} [M\"{u}nzner]
\label{thm:1-prin-curv-isop-hyp}
Let $M \subset S^{n+1}$ be a connected isoparametric hypersurface with $g$ principal curvatures
$\lambda_i = \cot \theta_i$, $0 < \theta_1 < \cdots < \theta_g < \pi$, with respective multiplicities $m_i$. Then
\begin{equation}
\label{eq:1-prin-curv-formula}
\theta_i = \theta_1 + (i-1) \frac{\pi}{g} , \quad 1 \leq i \leq g,
\end{equation}
and the multiplicities satisfy $m_i = m_{i+2}$ (subscripts mod $g$).  For any point $x \in M$, there
are $2g$ focal points of $(M,x)$ along the normal geodesic to $M$ through $x$, and they are evenly
distributed at intervals of length $\pi/g$.
\end{theorem}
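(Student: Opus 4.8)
The plan is to combine Corollary~\ref{cor:1-prin-curv-focal-submanifold} with the elementary identity $A_{-\eta}=-A_\eta$ for shape operators. First I would fix a focal submanifold $V_i=f_{\theta_i}M$, a point $p\in V_i$, and a unit normal $\eta$ to $V_i$ at $p$. By Corollary~\ref{cor:1-prin-curv-focal-submanifold}, $A_\eta$ has eigenvalues $\cot(\theta_j-\theta_i)$ with multiplicities $m_j$, $j\neq i$; applying the same corollary to the unit normal $-\eta$ shows $A_{-\eta}$ has exactly the same list. Since $A_{-\eta}=-A_\eta$, the list of eigenvalues-with-multiplicities of $A_\eta$ is invariant under negation, and using $-\cot(\theta_j-\theta_i)=\cot(\theta_i-\theta_j)$ this means there is a bijection $\tau$ of $\{1,\dots,g\}\setminus\{i\}$ with $\theta_{\tau(j)}-\theta_i\equiv\theta_i-\theta_j\pmod\pi$ and $m_{\tau(j)}=m_j$. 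Equivalently, on the circle ${\mathbf R}/\pi{\mathbf Z}$ the finite set $\Theta=\{\theta_1,\dots,\theta_g\}$ is carried to itself by the reflection $\theta\mapsto 2\theta_i-\theta$, and that reflection preserves the multiplicity attached to each point of $\Theta$. (The case $g=1$ is trivial, so assume $g\ge 2$.)

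Next I would extract the spacing from the fact that $\Theta$ is invariant under reflection about every one of its own points. Label the points of $\Theta$ in cyclic order, with gaps $\delta_1=\theta_2-\theta_1,\dots,\delta_{g-1}=\theta_g-\theta_{g-1}$ and $\delta_g=\theta_1+\pi-\theta_g$, so that $\delta_1+\cdots+\delta_g=\pi$. For an interior index $i$ ($2\le i\le g-1$), the reflection about $\theta_i$ fixes $\theta_i$; since it preserves $\Theta$ it cannot send $\theta_{i-1}$ to a point strictly between $\theta_i$ and $\theta_{i+1}$, nor $\theta_{i+1}$ to a point strictly between $\theta_{i-1}$ and $\theta_i$, which forces $\delta_{i-1}=\delta_i$. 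Hence $\delta_1=\cdots=\delta_{g-1}$, and the reflection about $\theta_1$ similarly forces $\delta_g=\delta_1$; therefore every gap equals $\pi/g$ and $\theta_i=\theta_1+(i-1)\pi/g$. This gives the focal-point count at once: along the normal geodesic circle through $x$, the point $f_t(x)$ is a focal point of $(M,x)$ exactly when $\sin(\theta_i-t)=0$ for some $i$ by \eqref{eq:1-diff-parallel-hyp-2}, i.e. $t\equiv\theta_i\pmod\pi$, which yields the $2g$ values $\theta_1+k\pi/g$, $k=0,\dots,2g-1$, evenly spaced at intervals $\pi/g$.

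For the multiplicities I would feed the spacing formula back into the bijection $\tau$ of the first step. The relation $\theta_{\tau(j)}+\theta_j\equiv 2\theta_i\pmod\pi$ becomes $\tau(j)+j\equiv 2i\pmod g$, so $m_{\tau(j)}=m_j$ reads $m_j=m_{2i-j}$ (subscripts mod $g$) for every admissible $i$. Choosing $i=j+1$ (and $i=1$ when $j=g$) gives $m_j=m_{j+2}$ for all $j$ mod $g$, which is the asserted relation.

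All the individual steps are short; the point requiring the most care is the combinatorial claim that a finite subset of a circle invariant under reflection about each of its own points must be equally spaced, and in particular keeping straight the distinction between the normal-geodesic circle ${\mathbf R}/2\pi{\mathbf Z}$ and the circle ${\mathbf R}/\pi{\mathbf Z}$ carrying the principal-curvature data, together with the treatment of the wrap-around gap $\delta_g$. Everything else follows directly from Corollary~\ref{cor:1-prin-curv-focal-submanifold} and the identity $A_{-\eta}=-A_\eta$.
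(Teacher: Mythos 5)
Your proposal is correct and follows essentially the same route as the paper: both arguments rest on Corollary \ref{cor:1-prin-curv-focal-submanifold} together with $A_{-\eta}=-A_{\eta}$, deduce that the angle set is invariant under reflection in each $\theta_i$ (the paper phrases the nearest-neighbor step as matching the largest eigenvalues of $A_\eta$ and $A_{-\eta}$, which is the same comparison), and read off both the spacing $\pi/g$ and the relation $m_{i+1}=m_{i-1}$ from that symmetry. The only cosmetic difference is that you derive $m_j=m_{2i-j}$ for all admissible $i$ and then specialize, whereas the paper extracts $m_{i+1}=m_{i-1}$ and $m_2=m_g$ directly.
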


\begin{proof}
If $g=1$, then the theorem is trivially true, so we now consider $g = 2$.
Let $V_1$ be the focal submanifold determined by the map $f_t$ for $t = \theta_1$.
By Corollary \ref{cor:1-prin-curv-focal-submanifold}, the eigenvalue $\cot (\theta_2 - \theta_1)$
of the shape operator $A_\eta$ is the same for every choice of unit normal $\eta$ at every 
point $p \in V_1$.  Since $A_{-\eta} = - A_\eta$, this says that 
\begin{displaymath}
\cot (\theta_2 - \theta_1) = - \cot (\theta_2 - \theta_1).
\end{displaymath}
Thus, $\cot (\theta_2 - \theta_1) = 0$, so $\theta_2 - \theta_1 = \pi/2$ as desired.  In the case $g=2$, there is no restriction on the multiplicities.

Next we consider the case $g \geq 3$.
For a fixed value of $i$, $1 \leq i \leq g$, let $V_i$ be the focal submanifold determined by the map $f_t$ for 
$t = \theta_i$.  By Corollary \ref{cor:1-prin-curv-focal-submanifold}, the set
\begin{displaymath}
\{\cot (\theta_j - \theta_i) \mid j \neq i\}
\end{displaymath}
of eigenvalues of the shape operator $A_\eta$ is the same for every choice of unit normal $\eta$ at every 
point $p \in V_i$. Since $A_{-\eta} = - A_\eta$, this says that the two sets
\begin{displaymath}
\{\cot (\theta_j - \theta_i) \mid j \neq i\}\ {\rm and}\ \{- \cot (\theta_j - \theta_i) \mid j \neq i\}
\end{displaymath}
are the same.
In the case $2 \leq i \leq g-1$, the largest eigenvalue of $A_\eta$ is $\cot (\theta_{i+1} - \theta_i)$ with multiplicity $m_{i+1}$, while the largest eigenvalue of $A_{-\eta}$ is
$\cot (\theta_i - \theta_{i-1})$ with multiplicity $m_{i-1}$.  Since these two largest eigenvalues and their respective multiplicities are equal, we conclude that
\begin{equation}
\label{eq:1-difference-theta-i}
\theta_{i+1} - \theta_i = \theta_i - \theta_{i-1}, \quad m_{i+1} = m_{i-1}, \quad 2 \leq i \leq g-1.
\end{equation}
If $i=1$, the largest eigenvalue of $A_\eta$ is $\cot (\theta_2 - \theta_1)$ with multiplicity $m_2$, and the largest
eigenvalue of $A_{-\eta}$ is
\begin{displaymath}
\cot (\theta_1 - \theta_g) = \cot (\theta_1 - (\theta_g - \pi)),
\end{displaymath}
with multiplicity $m_g$, and we have
\begin{equation}
\label{eq:1-difference-theta-1}
\theta_2 - \theta_1 = \theta_1 - (\theta_g - \pi), \quad m_2 = m_g.
\end{equation}
If we let $\theta_2 - \theta_1 = \delta$, then equation (\ref{eq:1-difference-theta-i}) implies that 
$\theta_g - \theta_1 = (g-1) \delta$, while equation (\ref{eq:1-difference-theta-1}) implies that
$\theta_g - \theta_1 = \pi - \delta$.  Combining these two equations, we get that $g \delta = \pi$, and thus
$\delta = \pi / g$.  From this we get the formula in equation (\ref{eq:1-prin-curv-formula}) for $\theta_i$.
The formula for the multiplicities in the theorem follows from
equations (\ref{eq:1-difference-theta-i}) and (\ref{eq:1-difference-theta-1}).  

If $x$ is any point of $M$, then each principal curvature $\cot \theta_i$
of $M$ gives rise to a pair of antipodal focal points along
the normal geodesic to $M$ through $x$.  Thus, there are $2g$ focal points of $(M,x)$ along this normal geodesic,
and they are evenly distributed at intervals of length $\pi/g$ by equation (\ref{eq:1-prin-curv-formula}).
\end{proof}

\begin{remark}[Isoparametric submanifolds and their Coxeter groups]
\label{rem:Coxeter-group}

\noindent
{\rm It follows from Theorem \ref{thm:1-prin-curv-isop-hyp} that the set of focal points along a normal circle to $M \subset S^{n+1}$ is invariant under the 
dihedral group $D_g$
of order $2g$ that acts on the normal circle and is generated by reflections in the focal points.
This is a fundamental idea that generalizes to isoparametric submanifolds of higher codimension in the sphere
(see Remark \ref{rem:isop-subm-codim-greater-than-1}).
Specifically, for an isoparametric submanifold $M^n$ of codimension $k > 1$ in $S^{n+1}$,
Carter and West \cite{CW6} (in the case $k=2$) and Terng \cite{Te1} for arbitrary $k > 1$
found a Coxeter group (finite group generated by reflections) that acts in a way similar to this dihedral group in the codimension one case.
This Coxeter group is important in the overall development of the theory in the case 
of isoparametric submanifolds of higher codimension (see Terng \cite{Te1} and Thorbergsson \cite{Th6}).}
\end{remark}

Since the multiplicities in Theorem \ref{thm:1-prin-curv-isop-hyp}
satisfy $m_i = m_{i+2}$ (subscripts mod $g$), we have the following immediate corollary.

\begin{corollary}
\label{cor:1-multiplicities-isop-hyp}
Let $M \subset S^{n+1}$ be a connected isoparametric hypersurface with $g$ distinct principal curvatures.  If $g$
is odd, then all of the principal curvatures have the same multiplicity.  If $g$ is even, then there are at most two
distinct multiplicities.
\end{corollary}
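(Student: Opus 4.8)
The plan is to deduce the statement directly from the relation $m_i = m_{i+2}$ (subscripts mod $g$) established in Theorem~\ref{thm:1-prin-curv-isop-hyp}; no further geometry is needed, so this is purely a counting argument on the cyclic index set $\{1,\dots,g\}$.

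First I would iterate the relation: applying $m_i = m_{i+2}$ repeatedly gives $m_i = m_{i+2k}$ for every integer $k$, where all subscripts are read mod $g$. Thus the multiplicity $m_i$ depends only on the orbit of $i$ under the map $\sigma\colon i \mapsto i+2 \pmod g$ acting on $\mathbf{Z}/g\mathbf{Z}$. The orbit of $i$ is the coset $i + \langle 2 \rangle$, where $\langle 2 \rangle$ denotes the cyclic subgroup of $\mathbf{Z}/g\mathbf{Z}$ generated by $2$, which has order $g/\gcd(2,g)$.

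If $g$ is odd, then $\gcd(2,g)=1$, so $\langle 2 \rangle = \mathbf{Z}/g\mathbf{Z}$ and there is a single orbit; hence $m_1 = m_2 = \cdots = m_g$, i.e., all principal curvatures share a common multiplicity. If $g$ is even, then $\gcd(2,g)=2$, so $\langle 2 \rangle$ has index $2$ and there are exactly two orbits, namely the odd-indexed positions and the even-indexed positions; consequently $m_i = m_1$ for all odd $i$ and $m_i = m_2$ for all even $i$, so at most two distinct multiplicities can occur.

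There is essentially no obstacle here; the only point requiring care is to remember that the relation supplied by Theorem~\ref{thm:1-prin-curv-isop-hyp} is cyclic (subscripts mod $g$), so that when $g$ is even it also links $m_g$ with $m_2$ and $m_{g-1}$ with $m_1$, closing up the two residue classes. This wrap-around is exactly what rules out, say, a hypersurface with $g=4$ and four distinct multiplicities while still permitting two.
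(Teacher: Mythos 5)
Your argument is correct and is exactly the route the paper takes: the corollary is stated as an immediate consequence of the relation $m_i = m_{i+2}$ (subscripts mod $g$) from Theorem~\ref{thm:1-prin-curv-isop-hyp}, and your orbit-counting under $i \mapsto i+2$ in $\mathbf{Z}/g\mathbf{Z}$ is just a careful spelling-out of why it is immediate.
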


\subsection{Cartan-M\"{u}nzner Polynomials}
\label{c-m-polynomials}

Our goal in this subsection is to outline the proof of M\"{u}nzner's generalization of Cartan's
Theorem \ref{thm:1-Cartan-1} concerning the algebraic nature of isoparametric hypersurfaces in $S^{n+1}$.
Here we follow the treatment in the book \cite[pp. 111--130]{CR8} closely.

As noted at the beginning of Section \ref{Cartan-work},
the original definition of an isoparametric family of hypersurfaces in a real space
form $\widetilde{M}^{n+1}$ was formulated in terms of the level sets of an isoparametric function, 
where a smooth
function $V:\widetilde{M}^{n+1} \rightarrow {\bf R}$ is called isoparametric if both of the classical Beltrami
differential parameters,
\begin{equation}
\label{eq:1-beltrami-2}
\Delta_1 V = |{\rm grad}\  V|^2, \quad \Delta_2 V = \Delta V\ ({\rm Laplacian}\ V),
\end{equation}
are functions of $V$ itself, and are therefore constant on the level sets of $V$ in $\widetilde{M}^{n+1}$.

In the case of an isoparametric hypersurface $M$ in the unit sphere $S^{n+1}$ in ${\bf R}^{n+2}$ with $g$ distinct principal curvatures,  M\"{u}nzner \cite{Mu} showed that the corresponding
isoparametric function $V:S^{n+1} \rightarrow {\bf R}$ is the restriction to $S^{n+1}$ of a homogeneous
polynomial $F:{\bf R}^{n+2} \rightarrow {\bf R}$ of degree $g$ satisfying certain differential equations.  Thus, it is useful
to relate the Beltrami differential parameters of a homogeneous function $F$ on ${\bf R}^{n+2}$ to those of its
restriction $V$ to $S^{n+1}$.

Recall that a
function $F:{\bf R}^{n+2} \rightarrow {\bf R}$ is {\em homogeneous of degree}
$g$ if $F(tx) = t^g F(x)$, for all
$t \in {\bf R}$ and $x \in {\bf R}^{n+2}$.  By Euler's Theorem, we know that for any $x \in {\bf R}^{n+2}$,
\begin{equation}
\label{eq:1-homogeneous}
\langle {\rm grad}^E F, x \rangle = g F(x).
\end{equation}
Here the superscript $E$ is used to denote the Euclidean gradient of $F$.  The gradient
of the restriction $V$ of $F$ to $S^{n+1}$ 
will be denoted by ${\rm grad}^S F$.  
Similarly, we will denote the respective Laplacians by
$\Delta^E F$ and $\Delta^S F$.  

The following theorem relates the various differential operators
for a homogeneous function $F$ of degree $g$, and it is useful in
proving Theorem \ref{thm:1-Muenzner-1}. (See \cite[pp. 112--113]{CR8} for a proof.)

\begin{theorem}
\label{thm:1-beltrami-homogeneous} 
Let $F:{\bf R}^{n+2} \rightarrow {\bf R}$ be a homogeneous function of degree $g$.  Then
\begin{enumerate}
\item[${\rm(a)}$] $|{\rm grad}^S F|^2 = |{\rm grad}^E F|^2 - g^2 F^2$,
\item[${\rm(b)}$] $\Delta^S F = \Delta^E F - g (g-1) F - g (n+1) F$.
\end{enumerate}
\end{theorem}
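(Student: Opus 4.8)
The plan is to prove the two parts separately, in both cases exploiting the orthogonal splitting of ${\bf R}^{n+2}$ along $S^{n+1}$ into the tangent space $T_xS^{n+1}$ and the radial line spanned by the unit vector $x$, together with Euler's identity \eqref{eq:1-homogeneous}.

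For part (a), the key observation is that at a point $x\in S^{n+1}$ the gradient ${\rm grad}^S F$ of the restriction of $F$ to the sphere is exactly the orthogonal projection of ${\rm grad}^E F$ onto $T_xS^{n+1}$, so that
\[
{\rm grad}^E F = {\rm grad}^S F + \langle {\rm grad}^E F, x\rangle\, x = {\rm grad}^S F + g F(x)\, x,
\]
where the second equality is \eqref{eq:1-homogeneous}. Since the two summands are orthogonal and $|x|=1$, the Pythagorean identity gives $|{\rm grad}^E F|^2 = |{\rm grad}^S F|^2 + g^2 F^2$, which is (a).

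For part (b), I would use the classical expression for the Euclidean Laplacian in polar coordinates $x = r\omega$, $r=|x|$, $\omega\in S^{n+1}$, namely
\[
\Delta^E = \partial_r^2 + \frac{n+1}{r}\,\partial_r + \frac{1}{r^2}\,\Delta^S .
\]
Homogeneity of degree $g$ means $F(r\omega) = r^g\, V(\omega)$ with $V$ the restriction of $F$ to $S^{n+1}$; hence $\partial_r F = g r^{g-1}V$, $\partial_r^2 F = g(g-1) r^{g-2} V$, and $r^{-2}\Delta^S F = r^{g-2}\Delta^S V$. Substituting these into the polar formula and evaluating at $r=1$ (where $V=F$) yields
\[
\Delta^E F = g(g-1) F + g(n+1) F + \Delta^S F,
\]
which rearranges to the asserted formula (b). Alternatively, one can avoid quoting the polar Laplacian and argue directly: restrict the trace of ${\rm Hess}^E F$ to a frame adapted to $S^{n+1}$, use the Gauss formula together with the fact that the second fundamental form of the unit sphere is $\mathrm{II}(X,Y) = -\langle X,Y\rangle\, x$ to compare ${\rm Hess}^S F$ with ${\rm Hess}^E F$ on tangential vectors, and compute ${\rm Hess}^E F(x,x) = g(g-1)F$ by differentiating \eqref{eq:1-homogeneous} radially.

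Neither part presents a genuine difficulty; the entire content is bookkeeping. The one place where care is needed is keeping track of signs and of the dimensional constant: the coefficient $n+1$ is $\dim S^{n+1}$ (equivalently, up to sign, the trace of the shape operator of the unit sphere), and in the direct approach it is easy to misplace the sign of the term $g(n+1)F$ arising from $\mathrm{II}$ applied to $F$. I would therefore present the polar-coordinate derivation as the cleanest route, since there the scaling in $r$ makes the bookkeeping essentially automatic.
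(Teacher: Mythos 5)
Your proof is correct: part (a) is exactly the tangential/normal splitting of ${\rm grad}^E F$ along $S^{n+1}$ combined with Euler's identity \eqref{eq:1-homogeneous}, and part (b) follows correctly from the polar-coordinate form of the Euclidean Laplacian (the coefficient $n+1$ and the signs all check out, giving $\Delta^E F = g(g-1)F + g(n+1)F + \Delta^S F$ at $r=1$). The paper itself omits the proof and defers to \cite[pp.\ 112--113]{CR8}, where the argument is essentially the one you give (the reference works with a frame adapted to the sphere, i.e.\ your ``alternative'' route for (b), but the content is the same bookkeeping).
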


M\"{u}nzner's generalization of Cartan's Theorem \ref{thm:1-Cartan-1} is the following.

\begin{theorem}[M\"{u}nzner]
\label{thm:1-Muenzner-1} 
Let $M \subset S^{n+1} \subset {\bf R}^{n+2}$ be a connected isoparametric hypersurface with $g$ 
principal curvatures
$\lambda_i = \cot \theta_i$, $0 < \theta_1 < \cdots < \theta_g < \pi$, with respective multiplicities $m_i$.
Then $M$ is an open subset of a level set of the restriction to $S^{n+1}$ of a homogeneous polynomial $F$ 
on ${\bf R}^{n+2}$ of degree $g$ satisfying the differential equations,
\begin{equation}
\label{eq:1-Muenzner-diff-eq-1}
|{\rm grad}^E F|^2 = g^2 r^{2g-2},
\end{equation}
\begin{equation}
\label{eq:1-Muenzner-diff-eq-2}
\Delta^E F = c r^{g-2},
\end{equation}
where $r = |x|$, and $c = g^2 (m_2 - m_1)/2$.
\end{theorem}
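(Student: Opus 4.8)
The plan is to build the homogeneous polynomial $F$ directly from the geometry of the isoparametric family by first constructing a smooth isoparametric function $V$ on an open subset of $S^{n+1}$ and then showing $V$ extends to a polynomial on ${\bf R}^{n+2}$. First I would use the results of Subsections~\ref{sec:1.2}--\ref{sec:1.3}: on the tube-neighborhood swept out by the parallel hypersurfaces $f_t(M)$, define $V(f_t(x)) = \cos(g\,(t + \theta_1'))$ for a suitable phase, or more cleanly, recall from Theorem~\ref{thm:1-prin-curv-isop-hyp} that the focal points are evenly spaced at intervals $\pi/g$, so the normal geodesic parametrization has a natural $D_g$-symmetry. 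The function $\widetilde V(x) = \cos(g\,d(x))$, where $d$ is the signed distance to a fixed hypersurface in the family measured along normal geodesics, is then well-defined and smooth on $S^{n+1}$ minus possibly the two focal submanifolds, and in fact extends smoothly across them because $\cos(g\,d)$ is even about each focal value and $d$ behaves like a distance function there. This $\widetilde V$ has level sets exactly the members of the isoparametric family (including the two focal submanifolds as $\widetilde V = \pm 1$), and $|{\rm grad}^S \widetilde V|$ and $\Delta^S \widetilde V$ are constant on each level set by the parallel-hypersurface formulas from Subsection~\ref{sec:1.2} together with the fact that each $f_t(M)$ has constant mean curvature. So $\widetilde V$ is an isoparametric function on $S^{n+1}$.

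Next I would compute $\Delta_1 \widetilde V$ and $\Delta_2 \widetilde V$ as explicit functions of $\widetilde V$. Writing $\widetilde V = \cos(g s)$ where $s$ is arc length along a normal geodesic, one has $|{\rm grad}^S \widetilde V|^2 = g^2 \sin^2(g s) = g^2(1 - \widetilde V^2)$, so $\Delta_1 \widetilde V = g^2(1 - \widetilde V^2)$; this is the spherical analogue of the first Cartan--M\"unzner equation and uses only the chain rule. For the Laplacian, $\Delta^S \widetilde V = \widetilde V'' + (\text{mean curvature of the level set})\,\widetilde V'$ along the normal direction, and the mean curvature of $f_t(M)$ is computed from Corollary~\ref{cor:1-prin-curv-isop-hyp} as $\sum_i m_i \cot(\theta_i - t)$; a trigonometric identity using $\theta_i = \theta_1 + (i-1)\pi/g$ and $m_i = m_{i+2}$ collapses this sum, and one obtains $\Delta^S \widetilde V = -g(n+g)\widetilde V + \tfrac{g^2(m_2 - m_1)}{2}$ — here the alternating structure of the multiplicities is exactly what makes the sum telescope into an affine function of $\widetilde V$. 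This computation is the technical core and the place where the constant $c = g^2(m_2 - m_1)/2$ emerges; I expect the trigonometric identity manipulating $\sum m_i \cot(\theta_1 + (i-1)\pi/g - t)$ to be the main obstacle, requiring careful bookkeeping of the parity of $g$ and the two multiplicity classes.

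Finally I would pass from $\widetilde V$ on $S^{n+1}$ to a homogeneous polynomial $F$ on ${\bf R}^{n+2}$. Define $F(x) = r^g \widetilde V(x/r)$ for $x \neq 0$ and $F(0) = 0$; this is homogeneous of degree $g$ by construction, and Theorem~\ref{thm:1-beltrami-homogeneous} translates the two spherical identities into the Euclidean ones: part~(a) gives $|{\rm grad}^E F|^2 = |{\rm grad}^S \widetilde V|^2 r^{2g-2} + g^2 F^2 = g^2 r^{2g-2}$ after substituting $|{\rm grad}^S \widetilde V|^2 = g^2(1 - \widetilde V^2)$ and $F = r^g \widetilde V$, and part~(b) converts the affine expression for $\Delta^S \widetilde V$ into $\Delta^E F = c\, r^{g-2}$, the cross-terms $-g(g-1)F - g(n+1)F$ cancelling against the $-g(n+g)\widetilde V$ term. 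It remains to see $F$ is actually a polynomial, not merely a smooth homogeneous function; this follows because $|{\rm grad}^E F|^2 = g^2 r^{2g-2}$ forces $F$ to extend smoothly (indeed real-analytically) across the origin to a homogeneous function of degree $g$ on all of ${\bf R}^{n+2}$, and a smooth function that is homogeneous of integer degree $g$ is necessarily a homogeneous polynomial of degree $g$ (expand in a Taylor series at $0$; homogeneity kills all terms of degree $\neq g$). Since $M$ is an open subset of a level set of $\widetilde V$, it is an open subset of a level set of $F$ restricted to $S^{n+1}$, completing the proof.
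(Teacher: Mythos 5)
Your construction follows the same route as M\"{u}nzner's (and the paper's): define $V=\cos(g\tau)$ along the normal geodesics, verify the spherical identities $|{\rm grad}^S V|^2=g^2(1-V^2)$ and $\Delta^S V=c-g(n+g)V$ using the mean curvature $\sum_j m_j\cot(\theta_j-t)$ of the parallel hypersurfaces together with the spacing and multiplicity relations of Theorem \ref{thm:1-prin-curv-isop-hyp}, extend homogeneously by $F=r^gV$, and convert to the Euclidean equations via Theorem \ref{thm:1-beltrami-homogeneous}. All of that is sound in outline and matches the paper. The genuine gap is in your final step, the claim that $F$ is a polynomial. Smoothness of $F$ on ${\bf R}^{n+2}\setminus\{0\}$ together with positive homogeneity of integer degree $g$ does \emph{not} force smoothness at the origin: the function $F(x,y)=x^3(x^2+y^2)^{-1/2}$ is homogeneous of degree $2$, smooth away from the origin, restricts to the smooth function $\cos^3\theta$ on the circle, yet fails to be $C^2$ at $0$ and is not a polynomial. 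So you must actually prove that $F$ is $C^{g+1}$ at the origin, and the assertion that the eikonal equation $|{\rm grad}^E F|^2=g^2r^{2g-2}$ ``forces'' real-analyticity there is exactly the hard content of the theorem, not a consequence one can wave at.

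More fundamentally, your Taylor-expansion argument requires $F$ to be defined on a full neighborhood of the origin, i.e.\ $V$ to be a globally well-defined smooth function on all of $S^{n+1}$. The theorem assumes only that $M$ is connected, not compact or complete, so the union of the normal great circles to $M$ need not cover $S^{n+1}$; moreover, where two normal geodesics issuing from different points of $M$ meet, nothing yet guarantees they assign the same value of $\cos(g\tau)$, and the normal geodesics from $M$ are only known to reach an open subset $h(C)$ of each normal sphere to a focal submanifold (Theorem \ref{thm:1-prin-curv-focal-submanifold}), so even local smoothness of $V$ across the focal set via $\cos(g\,d)=$ (even function of $d$) is not fully justified. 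The global facts you are implicitly invoking --- that $M$ sits inside a compact connected level set, that the family sweeps out the whole sphere, that $V$ is single-valued --- are \emph{consequences} of Theorem \ref{thm:1-Muenzner-1} (see Remark \ref{rem:range-V} and Theorem \ref{thm:1-connectedness}), not available inputs. This is precisely why M\"{u}nzner's proof establishes polynomiality by a direct, lengthy local computation showing that $F$, defined on the cone over a small open set $\widetilde U$, coincides there with a homogeneous polynomial of degree $g$; the polynomial then supplies the global extension. That local-to-polynomial step is the part your proposal is missing.
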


\begin{remark}
\label{recall-multiplicities}
{\rm Recall from Corollary \ref{cor:1-multiplicities-isop-hyp} that there are at most two distinct multiplicities $m_1, m_2$,
and the multiplicities satisfy $m_{i+2} = m_i$ (subscripts mod $g$).}
\end{remark}

M\"{u}nzner \cite[p. 65]{Mu}  called $F$ the {\em Cartan polynomial} of $M$, and now $F$ is usually referred to as the
{\em Cartan-M\"{u}nzner polynomial} of $M$.  Equations (\ref{eq:1-Muenzner-diff-eq-1})--(\ref{eq:1-Muenzner-diff-eq-2})
are called the {\em Cartan-M\"{u}nzner differential equations}.
By Theorem \ref{thm:1-beltrami-homogeneous} the
restriction $V$ of $F$ to $S^{n+1}$ satisfies the differential equations,

\begin{equation}
\label{eq:1-Muenzner-diff-eq-S-1}
|{\rm grad}^S V|^2 =  g^2 (1 - V^2),
\end{equation}
\begin{equation}
\label{eq:1-Muenzner-diff-eq-S-2}
\Delta^S V  = c - g(n+g) V,
\end{equation}
where $c = g^2 (m_2 - m_1)/2$. Thus $V$ is an isoparametric function in the sense of Cartan, since both
$|{\rm grad}^S V|^2$ and $\Delta^S V$ are functions of $V$ itself.  (Here we are using the superscript $S$ in the notations
${\rm grad}^S V$ and $\Delta^S V$ to emphasize that $V$ is a function defined on $S^{n+1}$.)

We now describe M\"{u}nzner's \cite[pp. 62--65]{Mu}
construction of this polynomial $F$ in detail.  This is basically the same approach used by Cartan \cite{Car3} in proving Theorem \ref{thm:1-Cartan-1} under the more restrictive assumption that all of the principal
curvatures have the same multiplicity.  

Note that Cartan \cite[pp. 364--365]{Car3} proved that if the
principal curvatures all have the same multiplicity $m$, then the polynomial $F$ must be harmonic.  This agrees with 
M\"{u}nzner's condition \eqref{eq:1-Muenzner-diff-eq-2} in the case where all of the multiplicities are equal,
since then the number $c = g^2 (m_2 - m_1)/2$ in equation \eqref{eq:1-Muenzner-diff-eq-2} equals zero,
and so $F$ is harmonic.

Let $M \subset S^{n+1}$ be a connected,
oriented isoparametric hypersurface with $g$ distinct principal curvatures
$\lambda_ i = \cot \theta_i$, $0 < \theta_1 < \cdots < \theta_g < \pi$, with respective multiplicities $m_i$.
The normal bundle $NM$ of $M$ in $S^{n+1}$ is trivial and is therefore diffeomorphic to $M \times {\bf R}$.  Thus we
can consider the normal exponential map $E:M \times {\bf R} \rightarrow S^{n+1}$ defined by

\begin{equation}
\label{eq:1-normal-exp-map}
E(x,t) = f_t (x) = \cos t \ x + \sin t \ \xi (x),
\end{equation}
where $\xi$ is the field of unit normals to $M$ in $S^{n+1}$. 

By the standard calculation for the location of focal points (see, for example, \cite[pp. 11--14]{CR8}), we know that the differential of $E$ has rank $n+1$ at 
$(x,t) \in M \times {\bf R}$, unless $\cot t$ is a principal curvature of $M$ at $x$.  Thus, for any noncritical
point $(x,t)$ of $E$, there is an open neighborhood $U$ of $(x,t)$ in $M \times {\bf R}$ on which $E$ restricts
to a diffeomorphism onto an open subset $\widetilde{U} = E(U)$ in $S^{n+1}$.  We define a function 
$\tau: \widetilde{U} \rightarrow {\bf R}$ by
\begin{equation}
\label{eq:1-defn-tau}
\tau (p) = \theta_1 - \pi_2(E^{-1} (p)),
\end{equation}
where $\pi_2$ is projection onto the second coordinate.  That is, if $p = E(x,t)$, then
\begin{equation}
\label{eq:1-tau}
\tau (p) = \theta_1 - t.
\end{equation}
Then we define a function $V:\widetilde{U} \rightarrow {\bf R}$ by,
\begin{equation}
\label{eq:1-V}
V (p) = \cos (g \tau (p)).
\end{equation}
Clearly, $\tau$ and $V$ are constant on each parallel hypersurface $M_t = f_t (M)$ in $\widetilde{U}$.

The number $\tau (p)$ is the oriented distance from $p$ to the first focal point along the normal geodesic to the
parallel hypersurface $M_t$ through $p$.  Thus, if we begin the construction with a parallel hypersurface 
near $M$ rather than with $M$ itself, we get the same functions $\tau$ and $V$.  If we begin the construction with 
the opposite field of unit normals $-\xi$ instead of $\xi$, then we obtain the function $-V$ instead of $V$.

We next extend $V$ to a homogeneous function of degree $g$ on the cone in ${\bf R}^{n+2}$ over $\widetilde{U}$ by the formula

\begin{equation}
\label{eq:1-def-F}
F(rp) = r^g \cos (g(\tau (p)),\ p \in \widetilde{U}, \ r>0.
\end{equation}

The first step in the proof of Theorem \ref{thm:1-Muenzner-1} is to show that the function $F$ in equation (\ref{eq:1-def-F})
satisfies the Cartan-M\"{u}nzner
differential equations (\ref{eq:1-Muenzner-diff-eq-1})--(\ref{eq:1-Muenzner-diff-eq-2}).
One then completes the proof of Theorem \ref{thm:1-Muenzner-1} by showing that $F$ is the restriction to the cone
over $\widetilde{U}$ of a homogeneous polynomial of degree $g$.  

These two steps involve lengthy calculations based on
the formula for the principal curvatures of an isoparametric
hypersurface given in Theorem \ref{thm:1-prin-curv-isop-hyp}.  We refer the reader to
 \cite[pp. 62--67]{Mu} or
\cite[pp. 115--125]{CR8} for the details of the proof, which we will omit here.

We now want to make a few remarks concerning some important consequences of Theorem \ref{thm:1-Muenzner-1}.

\begin{remark}[Consequences of Theorem \ref{thm:1-Muenzner-1}]
\label{rem:range-V}

\noindent
{\rm From equation (\ref{eq:1-Muenzner-diff-eq-S-1}), we see that the range of the restriction $V$ of $F$ to $S^{n+1}$ 
is contained in the closed interval $[-1, 1]$, since the left side of the equation is nonnegative.
We can see that the range of $V$ is all of the interval $[-1, 1]$ as follows.  Since $V$ is not constant 
on $S^{n+1}$, it has distinct maximum and minimum values on $S^{n+1}$.  By equation (\ref{eq:1-Muenzner-diff-eq-S-1})
these maximum and minimum values are 1 and $-1$, respectively, since ${\rm grad}^S V$ is nonzero at any point
where $V$ is not equal to $\pm 1$.  For any $s$ in the open interval $(-1, 1)$, the level set
$V^{-1} (s)$ is a compact hypersurface, since ${\rm grad}^S V$  is never zero on $V^{-1} (s)$.  M\"{u}nzner also
proves that each level set of $V$ is connected, and therefore, the original 
connected isoparametric hypersurface $M$ is contained in a unique compact, connected isoparametric hypersurface.

For $s= \pm 1$, ${\rm grad}^S V$ is identically equal to zero on $V^{-1} (s)$, and the sets 
$M_+ = M_{1} = V^{-1} (1)$ and
$M_- = M_{-1} = V^{-1} (-1)$ are submanifolds of codimension greater than one in $S^{n+1}$.  
M\"{u}nzner showed that $M_+$ and $M_-$ are connected (see Theorem \ref{thm:1-connectedness}), 
and that they are the focal submanifolds of any
isoparametric hypersurface $V^{-1} (s)$, $-1 < s < 1$, in the family of isoparametric hypersurfaces.  Thus,
there are only two focal submanifolds 
regardless of the number $g$ of distinct principal curvatures.  
By Theorem \ref{thm:1-prin-curv-isop-hyp}, there are $2g$ focal points evenly distributed along each normal geodesic
to the family $\{V^{-1} (s)\}$ of isoparametric hypersurfaces.  M\"{u}nzner proves in part (a) of
Theorem \ref{thm:1-ball-bundles} below
that these focal points lie alternately
on the two focal submanifolds $M_+$ and $M_-$.

Suppose we consider the isoparametric hypersurface $V^{-1} (0)$.  From equation (\ref{eq:1-V}), we see that the function
$\tau$ equals $\pi/2g$ on $V^{-1} (0)$.  The function $\tau$ is the distance from a point $x$ in $V^{-1} (0)$ to the
first focal point along the normal geodesic through $x$.  By Theorem \ref{thm:1-prin-curv-isop-hyp}, this
means that the largest principal curvature of $V^{-1} (0)$ is $\cot (\pi/2g)$, and the principal curvatures
of $V^{-1} (0)$ are given by $\cot \theta_i$, where
\begin{equation}
\label{eq:1-prin-curv-M_0}
\theta_i = \frac{\pi}{2g} + \frac{(i-1)}{g} \pi, \quad 1 \leq i \leq g,
\end{equation}
with multiplicities satisfying $m_{i+2} = m_i$ (subscripts mod $g$).
This simple form
for $\theta_i$ is helpful in the calculations involved in the proof of Theorem \ref{thm:1-Muenzner-1}.}
\end{remark}

\subsection{Global Structure Theorems}
\label{sec:1.6}
In this section, we discuss several results concerning the global structure of an isoparametric family of
hypersurfaces in $S^{n+1}$ that stem from M\"{u}nzner's construction of the Cartan-M\"{u}nzner polynomials,
as discussed in the previous subsection (see \cite[pp. 67--69]{Mu}).

These structure theorems ultimately lead to
M\"{u}nzner's main structure theorem which
states that a compact, connected isoparametric hypersurface $M$ divides the sphere $S^{n+1}$ into two
ball bundles over the two focal submanifolds $M_+$ and $M_-$, which lie on different sides of $M$ in $S^{n+1}$.

From this theorem, M\"{u}nzner ultimately uses methods from algebraic topology to
derive his primary result that the number $g$ of distinct principal curvatures of an isoparametric
hypersurface in $S^{n+1}$ must satisfy  $g =1,2,3,4$ or 6.

Let $F:{\bf R}^{n+2} \rightarrow {\bf R}$ be the Cartan-M\"{u}nzner polynomial of degree $g$ constructed
from a connected isoparametric hypersurface in $S^{n+1} \subset {\bf R}^{n+2}$ with $g$ distinct
principal curvatures as in Theorem \ref{thm:1-Muenzner-1}, and let $V$ denote the restriction of $F$
to $S^{n+1}$.  As noted in Remark \ref{rem:range-V},
the range of the function $V$ is the closed interval
$[-1,1]$, and each level set $M_t = V^{-1} (t),\  -1 < t < 1$, is an isoparametric hypersurface in $S^{n+1}$.

For the sake of definiteness, we let $M = M_0 = V^{-1} (0)$ be the isoparametric hypersurface discussed in 
Remark \ref{rem:range-V}. We denote the two (possibly equal) multiplicities of $M$ by $m_+ = m_1$
and $m_- = m_{-1}$.  

The first goal is to show that $M$ and all of the parallel hypersurfaces $M_t,\  -1 < t < 1$, as well as the two focal submanifolds, are connected.  We omit the proof here and refer the reader to M\"{u}nzner's paper \cite{Mu} or to
\cite[pp. 130--137]{CR8}.

\begin{theorem} [Connectedness of the level sets of $F$]
\label{thm:1-connectedness}
Let $F:{\bf R}^{n+2} \rightarrow {\bf R}$ be a Cartan-M\"{u}nzner polynomial of degree $g$ and $V$ its restriction
to $S^{n+1}$.  Then each isoparametric hypersurface
\begin{displaymath}
M_t = V^{-1} (t),\  -1 < t < 1,
\end{displaymath}
is connected.  Moreover,
$M_+ = V^{-1}(1)$ and $M_- = V^{-1}(-1)$ are the focal submanifolds,
and they are also connected.  
\end{theorem}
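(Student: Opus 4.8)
The plan is to exploit the Cartan--M\"{u}nzner polynomial $F$ of degree $g$ and its restriction $V$ to $S^{n+1}$, together with the explicit form of the differential equations \eqref{eq:1-Muenzner-diff-eq-S-1}--\eqref{eq:1-Muenzner-diff-eq-S-2}. First I would fix a regular value $t_0 \in (-1,1)$ and a connected component $N$ of $M_{t_0} = V^{-1}(t_0)$. By the work of the previous subsections, $N$ is a connected isoparametric hypersurface, and the normal exponential map $E(x,t) = \cos t\, x + \sin t\, \xi(x)$ off $N$ sweeps out the parallel hypersurfaces and the two focal submanifolds. The key geometric input is that the normal geodesic through any point of $N$ meets exactly $2g$ focal points at equal intervals $\pi/g$ (Theorem \ref{thm:1-prin-curv-isop-hyp}), and these focal points lie alternately on $M_+ = V^{-1}(1)$ and $M_- = V^{-1}(-1)$; this identifies $M_+$ and $M_-$ as precisely the two focal submanifolds of the family, which is the second assertion of the theorem.

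For the connectedness of the level sets $M_t$, $-1 < t < 1$, the approach is to show that $S^{n+1}$ can be reconstructed from a single focal submanifold by a fiberwise-spherical construction that is manifestly connected. Concretely, starting from $M_+$, form the unit normal bundle $S^{m_- } M_+ \to M_+$ (of rank $m_-$ since $M_+$ has codimension $m_-+1$ in $S^{n+1}$), and map it into $S^{n+1}$ by moving a suitable distance $\pi/g$ along normal geodesics; one checks, using Corollary \ref{cor:1-prin-curv-focal-submanifold} and the spherical normal exponential map, that the image after running distance from $0$ to $\pi/g$ is exactly the closed region between $M_+$ and $M_-$, hence the two such regions cover $S^{n+1}$. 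Since $M_+$ is connected (to be proved) and the fibers are spheres, hence connected, each region is connected; and since $S^{n+1}$ is connected and the two regions overlap along the isoparametric hypersurfaces $M_t$, one gets that each $M_t$ is connected. Alternatively, and more cleanly, I would use that $M_t$ is a regular level set of the analytic function $V$ on the connected analytic manifold $S^{n+1}$ together with the Cartan--M\"{u}nzner equation $|{\rm grad}^S V|^2 = g^2(1-V^2)$: this forces the gradient flow of $V$ to carry any level set diffeomorphically onto any nearby level set, so all $M_t$ for $t \in (-1,1)$ are diffeomorphic; the connectedness of one of them then propagates to all, and the connectedness of that one reduces to the connectedness of $M_+$ via the tube picture above.

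So the whole argument funnels into the single hard point: \emph{$M_+$ is connected} (and by the symmetric argument applied to $-V$, so is $M_-$). Here the plan is topological. The region $B_+ \subset S^{n+1}$ swept out from $M_+$ over normal distances in $[0,\pi/g]$ is a disk bundle over $M_+$ with fiber a disk of dimension $m_- + 1$; similarly $B_-$ over $M_-$; and $S^{n+1} = B_+ \cup B_-$ with $B_+ \cap B_- = M_{1/2}$ say, the common boundary isoparametric hypersurface. If $M_+$ had two components, then $B_+$ would have two components, and a Mayer--Vietoris / connectedness argument against $\pi_0(S^{n+1}) = 0$ would force $\partial B_+$ to be disconnected, contradicting that $\partial B_+$ is a single parallel hypersurface — but circularly this itself needs connectedness of the hypersurface. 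The honest resolution, and the one I expect to be the genuine obstacle, is an argument in the style of M\"{u}nzner using the homogeneous polynomial $F$ directly: one shows that $M_+ = \{F = r^g\} \cap S^{n+1}$ is the set of maxima of $V$ on $S^{n+1}$, writes down the Hessian of $V$ there using the Cartan--M\"{u}nzner equations, verifies it is nondegenerate in the normal directions, and then runs a Morse-theory / Lusternik--Schnirelmann argument on $S^{n+1}$ to conclude that the maximum set is connected. This is exactly the delicate part, and I would simply invoke it from M\"{u}nzner \cite{Mu} or \cite[pp. 130--137]{CR8} rather than reproduce it; the rest of the theorem then follows from the tube/gradient-flow considerations above.
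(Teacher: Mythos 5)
Your write-up correctly identifies the architecture of the result, but it does not actually prove it: by your own account the entire content of the theorem is funneled into the connectedness of $M_+ = V^{-1}(1)$ together with the surjectivity of the tube construction onto the region between $M_+$ and $M_-$, and both of these are ultimately ``invoked from M\"{u}nzner or the book'' rather than established. To be fair, the paper does exactly the same thing --- Theorem \ref{thm:1-connectedness} is stated with the proof omitted and the reader referred to \cite{Mu} and \cite[pp.~130--137]{CR8} --- so your sketch matches the paper's level of detail. But judged as a proof there is a genuine gap, and the one concrete strategy you offer for the hard step would fail: the maximum set of a Morse--Bott function on a connected manifold need \emph{not} be connected (the function $x_1^2$ restricted to $S^2$ has nondegenerate critical manifolds, and its maximum set is the two points $\pm e_1$), so no Hessian computation combined with generic Morse or Lusternik--Schnirelmann reasoning can show that $V^{-1}(1)$ is connected. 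Something specific to the Cartan--M\"{u}nzner equations must be used.

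The argument that actually works (in \cite{Mu} and \cite[pp.~130--137]{CR8}) runs in the opposite direction from yours: rather than deducing connectedness of the level sets from connectedness of $M_+$, one starts from the single connected isoparametric hypersurface $M$ used to build $F$ and shows that its normal exponential map $E(x,t)=\cos t\,x+\sin t\,\xi(x)$ is \emph{surjective} onto $S^{n+1}$ (its image is shown to be open, including near the focal points, and closed). Since $V(E(x,t))=\cos(g(\theta_1-t))$ depends only on $t$, every level set of $V$ is then a finite union of sets $f_t(M)$, each a continuous image of the connected $M$; equation \eqref{eq:1-Muenzner-diff-eq-S-1} forces the normal geodesics to be integral curves of ${\rm grad}^S V/|{\rm grad}^S V|$, which is what allows one to identify these finitely many images with one another. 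Connectedness of $M_{\pm}$ then comes for free as a continuous image of $M$ under a focal map, rather than serving as the input. I would also flag a latent circularity in your tube reduction: the statement that the tube of radius $\pi/2g$ about $M_+$ is \emph{all} of $B_+$ is part (b) of Theorem \ref{thm:1-ball-bundles}, which in the paper's logical order comes after, and depends on, the present theorem. (A minor point: $M_+$ has codimension $m_+ + 1$, not $m_- + 1$, so its unit normal spheres are $S^{m_+}$; this does not affect connectedness since $m_+ \geq 1$.)
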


\begin{remark}
\label{rem:compact-connected}
{\rm A consequence of Theorem \ref{thm:1-connectedness} is that any connected isoparametric
hypersurface $M$ lies in a unique compact, connected isoparametric hypersurface of the form
$V^{-1}(t), -1 < t < 1$, where $V$ is the restriction to $S^{n+1}$ of the Cartan-M\"{u}nzner polynomial of $M$.
For the rest
of Section \ref{munzner-work}, we will assume that each isoparametric hypersurface and each focal submanifold is
compact and connected.}
\end{remark}

The next step is M\"{u}nzner's important structure theorem which
states that a compact, connected isoparametric hypersurface $M$ divides the sphere $S^{n+1}$ into two
ball bundles over the two focal submanifolds $M_+$ and $M_-$, which lie on different sides of $M$ in $S^{n+1}$.
The precise wording of the theorem is as follows. (See M\"{u}nzner \cite[pp. 67--69]{Mu} or \cite[pp. 132--133]{CR8} for a proof.)

\begin{theorem}
\label{thm:1-ball-bundles} 
Let $k= \pm1$, and let $Z$ be a normal vector to the focal submanifold $M_k$ in $S^{n+1}$. Let 
$\exp:NM_k \rightarrow S^{n+1}$ denote the normal exponential map for $M_k$.  Then
\begin{enumerate}
\item[${\rm(a)}$] $V(\exp Z) = k \cos(g |Z|)$.
\item[${\rm(b)}$] Let $B_k = \{q \in S^{n+1} \mid kV(q) \geq 0 \}$, and let $(B^\perp M_k,S^\perp M_k)$ be the bounded
unit ball bundle in $NM_k$.  Then
\begin{displaymath}
\psi_k : (B^\perp M_k,S^\perp M_k) \rightarrow (B_k,M),
\end{displaymath}
where $M=V^{-1}(0)$ and $\psi_k (Z) = \exp (\frac{\pi}{2g}Z)$, is a diffeomorphism of manifolds with boundary.
\end{enumerate}
\end{theorem}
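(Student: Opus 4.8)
The plan is to prove part (a) first, as part (b) --- and the assertion noted in Remark \ref{rem:range-V} that the $2g$ focal points along a normal geodesic lie alternately on $M_+$ and $M_-$ --- will follow from it by a geodesic bookkeeping argument plus one Jacobi-field computation. Fix $k=\pm1$ and let $M_k=f_{\theta_i}(M)$ be the focal submanifold on which $V\equiv k$, where $M=V^{-1}(0)$ has principal curvatures $\cot\theta_j$ with $\theta_j=\pi/(2g)+(j-1)\pi/g$ (Remark \ref{rem:range-V}), so that $g\theta_i\equiv\pi/2\pmod\pi$. The first step is to determine $V$ along the normal geodesic $\gamma_x(t)=\cos t\,x+\sin t\,\xi(x)$ to $M$ at a point $x\in M$. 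Since $|{\rm grad}^S V|=g\sqrt{1-V^2}$ is a function of $V$, Theorem \ref{thm:1-integral-curves-xi} shows that the normal geodesics to the level hypersurfaces of $V$ are the reparametrized integral curves of ${\rm grad}^S V/|{\rm grad}^S V|$, so $\gamma_x'$ is a scalar multiple of ${\rm grad}^S V$ along $\gamma_x$. Differentiating $|{\rm grad}^S V|^2=g^2(1-V^2)$ yields ${\rm Hess}^S V({\rm grad}^S V,\cdot)=-g^2V\langle{\rm grad}^S V,\cdot\rangle$, hence $(V\circ\gamma_x)''={\rm Hess}^S V(\gamma_x',\gamma_x')=-g^2\,(V\circ\gamma_x)$ wherever ${\rm grad}^S V\ne0$ and therefore everywhere by analyticity. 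With $V(\gamma_x(0))=0$ and $(V\circ\gamma_x)'(0)=\langle{\rm grad}^S V(x),\xi(x)\rangle=\epsilon g$ (where $\epsilon=\pm1$ is fixed, since ${\rm grad}^S V$ is a nonzero normal field of length $g$ on $M$), this forces $V(\gamma_x(t))=\epsilon\sin(gt)$.

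Next, for $p\in M_k$ and $x\in f_{\theta_i}^{-1}(p)$, the vector $h(x)=-\sin\theta_i\,x+\cos\theta_i\,\xi(x)$ from Subsection \ref{sec:1.3} is a unit normal to $M_k$ at $p$ with $\exp_p(s\,h(x))=\gamma_x(\theta_i+s)$; substituting, and using $g\theta_i\equiv\pi/2\pmod\pi$ and $V(p)=k$, gives $V(\exp_p(s\,h(x)))=k\cos(gs)$. As $x$ runs over the fiber $f_{\theta_i}^{-1}(p)$, the vectors $h(x)$ fill the whole unit normal sphere $S_p^\perp M_k$, because $h$ restricted to that fiber is a local diffeomorphism of $m_i$-spheres and hence (by connectedness/compactness of the fiber, or by analyticity in the normal direction) surjective; therefore $V(\exp Z)=k\cos(g|Z|)$ for every normal vector $Z$ of $M_k$, which is part (a). In particular $V$ takes the values $\pm1$ alternately at the successive multiples of $\pi/g$ along any normal geodesic to $M_k$, giving the alternation of focal points on $M_+$ and $M_-$.

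For (b), part (a) gives $V(\psi_k(Z))=k\cos((\pi/2)|Z|)$, so $kV(\psi_k(Z))=\cos((\pi/2)|Z|)\in[0,1]$ for $|Z|\le1$, with value $0$ exactly when $|Z|=1$; thus $\psi_k$ maps $B^\perp M_k$ into $B_k$ and $S^\perp M_k$ into $M$. For surjectivity onto $B_k$, given $q\in B_k\setminus M_k$ take the normal geodesic $\sigma$ to $V^{-1}(V(q))$ through $q$ in the direction of increasing $kV$; the ODE of (a) gives $V\circ\sigma(s)=V(q)\cos(gs)+k\sqrt{1-V(q)^2}\,\sin(gs)$, so $\sigma$ reaches $M_k$ at arclength $s^*=(1/g)\arccos(kV(q))\in(0,\pi/(2g)]$, and since $\sigma$ crosses $M=V^{-1}(0)$ it is a continuation of a normal geodesic to $M$ and so meets $M_k$ orthogonally (Subsection \ref{sec:1.3}); reversing $\sigma$ exhibits $q$ as $\psi_k$ of the normal vector $-(2gs^*/\pi)\,\sigma'(s^*)$, whose length is $\le1$. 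For injectivity, if $\psi_k(W_1)=\psi_k(W_2)=q$ then $\cos((\pi/2)|W_1|)=kV(q)=\cos((\pi/2)|W_2|)$ forces $|W_1|=|W_2|$, and by (a) the terminal velocity at $q$ of the geodesic $s\mapsto\exp(s\,W_i/|W_i|)$ --- which reaches $q$ at $s=\pi|W_i|/(2g)$ --- equals $-k\,{\rm grad}^S V(q)/|{\rm grad}^S V(q)|$, so it is the same for $i=1,2$; running this geodesic backward from $q$ (it is minimizing, being of length $<\pi$) recovers the common base point and normal vector, so $W_1=W_2$.

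Thus $\psi_k\colon(B^\perp M_k,S^\perp M_k)\to(B_k,M)$ is a continuous bijection onto a compact Hausdorff space, hence a homeomorphism; to upgrade it to a diffeomorphism of manifolds with boundary it remains to see that $d\psi_k$ is everywhere nonsingular. After rescaling by $\pi/(2g)$, this says the normal exponential map of $M_k$ has no focal point at distance $\le\pi/(2g)$: the tangential focal distances in $(0,\pi)$ along a normal geodesic are the $s$ with $\cot s$ an eigenvalue of the shape operator $A_\eta$ of $M_k$, which by Corollary \ref{cor:1-prin-curv-focal-submanifold} together with $\theta_j-\theta_i=(j-i)\pi/g$ are exactly $\pi/g,2\pi/g,\dots,(g-1)\pi/g$, while the remaining (normal-space) focal point lies at distance $\pi$; the smallest is $\pi/g>\pi/(2g)$. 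Since $\psi_k$ carries the interior of $B^\perp M_k$ into $\{kV>0\}$ and the boundary into $M$, it is a diffeomorphism of manifolds with boundary. I expect part (a) to be the main obstacle --- both in justifying the second-order ODE for $V$ along the normal geodesics and in extending $V(\exp Z)=k\cos(g|Z|)$ to all normal directions of $M_k$; granted (a), part (b) is a careful but essentially routine unwinding, the one non-formal ingredient being the focal-distance count just given.
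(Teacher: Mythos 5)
Your proposal is correct, and it is worth noting that the paper itself gives no proof of Theorem \ref{thm:1-ball-bundles}, deferring to M\"{u}nzner \cite[pp. 67--69]{Mu} and \cite[pp. 132--133]{CR8}. Your route to part (a) differs in emphasis from those sources: there, the formula $V(\exp Z)=k\cos(g|Z|)$ drops out almost immediately from the explicit construction $V=\cos(g\tau)$ in equation \eqref{eq:1-def-F}, with $\tau$ the oriented distance to the first focal point, extended to all of $S^{n+1}$ by analyticity; you instead rederive the behavior of $V$ along normal geodesics directly from the differential equations \eqref{eq:1-Muenzner-diff-eq-S-1}--\eqref{eq:1-Muenzner-diff-eq-S-2}, obtaining $(V\circ\gamma)''=-g^2(V\circ\gamma)$ by differentiating $|{\rm grad}^S V|^2=g^2(1-V^2)$ along the gradient flow. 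This is a legitimate, self-contained alternative that uses only the statement of Theorem \ref{thm:1-Muenzner-1} rather than the details of its proof, at the cost of the analyticity continuation across the zeros of ${\rm grad}^S V$. Part (b) then rests on the same two pillars as the standard argument: the formula of (a) for the set-theoretic statements, and the focal-distance count (first tangential focal distance $\pi/g>\pi/(2g)$ by Corollary \ref{cor:1-prin-curv-focal-submanifold}, normal-space focal distance $\pi$) for nonsingularity of the differential. Two steps deserve an extra sentence, though neither is a gap: (i) surjectivity of $h$ from the fiber $f_{\theta_i}^{-1}(p)$ onto the full unit normal sphere $S_p^\perp M_k$ --- the fiber is a priori only an \emph{open} subset of an $m_i$-sphere, but under the standing compactness assumption of Remark \ref{rem:compact-connected} it is also closed, hence the whole sphere, and $h$ is then a covering map (alternatively, the analyticity argument from the proof of Corollary \ref{cor:1-prin-curv-focal-submanifold} applies verbatim); (ii) in the injectivity step, the claim that the terminal velocity at $q$ is $-k\,{\rm grad}^S V(q)/|{\rm grad}^S V(q)|$ follows from (a) together with $|{\rm grad}^S V|^2=g^2(1-V^2)$ via the equality case of Cauchy--Schwarz, since $\langle {\rm grad}^S V,\beta'\rangle=-kg\sin(gs)$ has the same absolute value as $|{\rm grad}^S V|$ along the geodesic.
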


As a consequence of the fact that the set of normal geodesics to each focal submanifold $M_k$, $k = \pm 1$, is the same 
as the set of normal geodesics to each of the parallel isoparametric hypersurfaces, we immediately obtain the following corollary.

\begin{corollary}
\label{cor:1-same-focal set} 
Let $M_k$, $k = \pm 1$, be a focal submanifold of an isoparametric hypersurface $M$.  Then the focal set of $M_k$ is 
the same as the focal set of $M$, i.e., it is $M_k \cup M_{-k}$.
\end{corollary}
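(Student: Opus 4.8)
The plan is to compute the focal set of $M_k$ one normal geodesic at a time and match it with $M_+\cup M_-=V^{-1}(\{1,-1\})$, which by Remark~\ref{rem:range-V} is exactly the focal set of $M$ (and of every isoparametric hypersurface in the family). Regard $M_k$ as a focal submanifold $V_i$ in the notation of Subsection~\ref{sec:1.3}, so that $M_k$ has codimension $m_i+1$ in $S^{n+1}$. Fix $p\in M_k$, a unit normal $\eta$ to $M_k$ at $p$, and let $\gamma(s)=\exp(s\eta)$ be the corresponding normal geodesic, which is $2\pi$-periodic. First I would list all focal points of $(M_k,p)$ along $\gamma$. Writing the $M_k$-Jacobi equation in the constant curvature space $S^{n+1}$ and splitting the $M_k$-Jacobi fields into those with $J(0)\in T_pM_k$ and those with $J(0)=0$ and $J'(0)$ normal to $M_k$ and perpendicular to $\eta$, one finds focal points exactly at $s\equiv\theta_j-\theta_i\pmod\pi$ with multiplicity $m_j$ for $j\neq i$ — this uses Corollary~\ref{cor:1-prin-curv-focal-submanifold}, which gives that $A_\eta$ has principal curvatures $\cot(\theta_j-\theta_i)$ — together with $s\equiv 0\pmod\pi$, $s\neq 0$, of multiplicity $m_i$, these arising from the normal directions perpendicular to $\eta$ (this is where codimension greater than one matters). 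Since $\theta_j-\theta_i=(j-i)\pi/g$ by Theorem~\ref{thm:1-prin-curv-isop-hyp}, the focal points of $M_k$ along $\gamma$ are precisely the points $\gamma(\ell\pi/g)$ for all integers $\ell\ge 1$.

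Next I would combine this with Theorem~\ref{thm:1-ball-bundles}. The normal geodesics to $M_k$ coincide with the normal geodesics to the parallel hypersurfaces $M_t$ (the fact recalled just before the statement of the corollary), so along $\gamma$ part~(a) of Theorem~\ref{thm:1-ball-bundles} gives $V(\gamma(s))=k\cos(gs)$. Hence $V(\gamma(s))\in\{1,-1\}$ precisely when $s=\ell\pi/g$ for some integer $\ell$, in which case $\gamma(s)\in M_{(-1)^\ell k}$; that is, $\gamma$ meets $M_+\cup M_-$ at exactly these points, alternately on the two focal submanifolds. Comparing with the previous paragraph, the focal points of $M_k$ along $\gamma$ are exactly the points where $\gamma$ meets $M_+\cup M_-$. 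Taking the union over all $p\in M_k$ and all unit normals $\eta$ yields that the focal set of $M_k$ is contained in $M_+\cup M_-$.

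For the reverse inclusion I would use the surjectivity half of Theorem~\ref{thm:1-ball-bundles}: the ball bundle picture shows the normal exponential map of $M_k$ restricted to vectors of length $\le\pi/g$ is onto $S^{n+1}$ (length $\le\pi/2g$ already covers $B_k$ by part~(b), and continuing out to $\pi/g$ crosses $M=V^{-1}(0)$ at length $\pi/2g$ and reaches $M_{-k}$ at length $\pi/g$, covering the complement). So any $q\in M_+\cup M_-$ lies on a normal geodesic $\gamma$ to $M_k$; from $V(\gamma(s))=k\cos(gs)=\pm 1$ the parameter at which $q$ is reached is $s=\ell\pi/g$, and one may arrange $\ell\ge 1$ (automatic when $q\in M_{-k}$, since the basepoint lies in the disjoint set $M_k$; when $q\in M_k$, choose a geodesic based at a point of $M_k$ other than $q$). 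By the list in the first paragraph every such $s$ is a focal distance of $M_k$, so $q$ is a focal point of $M_k$. Hence the focal set of $M_k$ equals $M_+\cup M_-$, which is the focal set of $M$ by Remark~\ref{rem:range-V}.

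The step I expect to be most delicate is the enumeration of focal distances in the first paragraph, namely not overlooking the focal points of $M_k$ occurring at $s\equiv 0\pmod\pi$ that stem from $M_k$ having codimension greater than one (multiplicity $m_i$); without these, the intersection points of $\gamma$ with $M_k$ itself — in particular the antipode of the basepoint — would be missing, and the two focal sets would fail to coincide. The other point requiring care is that the reverse inclusion genuinely needs the global surjectivity statement in Theorem~\ref{thm:1-ball-bundles}, not merely the pointwise shape operator formulas.
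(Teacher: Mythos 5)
Your proof is correct and follows the same route the paper takes, namely the observation (stated in the sentence immediately preceding the corollary) that the normal geodesics to $M_k$ coincide with the normal geodesics to the parallel family, combined with the known focal structure along those geodesics from Corollary \ref{cor:1-prin-curv-focal-submanifold} and Theorem \ref{thm:1-ball-bundles}. You have simply made explicit the focal-distance enumeration --- including, correctly, the multiplicity-$m_i$ focal points at $s\equiv 0\pmod{\pi}$ arising from the codimension of $M_k$ --- that the paper treats as immediate.
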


\subsubsection{M\"{u}nzner's restriction on the number of principal curvatures}
\label{restriction-on-g}

M\"{u}nzner's major result
is that the number $g$ of distinct principal curvatures of an isoparametric hypersurface
$M$ in $S^{n+1}$ is $1,2,3,4$ or 6.  This is a lengthy and delicate computation involving the cohomology rings of the hypersurface $M$ and its two focal submanifolds $M_+$ and $M_-$.
The structure of these rings is determined by the basic topological fact that a compact,
connected isoparametric hypersurface $M \subset S^{n+1}$ divides $S^{n+1}$ into two ball bundles over the
two focal submanifolds, as in Theorem \ref{thm:1-ball-bundles} (b).    

Theorem \ref{thm:1-dim-cohomology} below does not assume that $M$ is isoparametric,
but only that it divides the sphere into two ball bundles.  This is important, since the theorem can be applied to
more general settings, in particular, to the case of a compact, connected proper Dupin hypersurface
embedded in $S^{n+1}$, as was shown by Thorbergsson \cite{Th1} (see also \cite[pp. 140--143]{CR8}).

\begin{remark}[Proper Dupin hypersurfaces]
\label{rem:def-dupin}
{\rm An oriented hypersurface $M$ in a real space form $\widetilde{M}^{n+1}$ is called a 
{\em proper Dupin hypersurface} if the number $g$ of distinct principal curvatures is constant on $M$, and if each continuous principal curvature function on $M$ is constant along each leaf of its corresponding principal foliation.}
\end{remark}

Using methods of algebraic topology,
M\"{u}nzner \cite{Mu2} proved the theorem below, and we refer the reader to M\"{u}nzner's paper for the proof.

\begin{theorem}
\label{thm:1-dim-cohomology} 
Let $M$ be a compact, connected hypersurface in $S^{n+1}$ which divides $S^{n+1}$ into two ball bundles
over submanifolds $M_+$ and $M_-$.  Then the number $\alpha = (1/2)\ {\rm dim}\ H^* (M,R)$ can only assume the
values $1,2,3,4$ and 6. (The ring $R$ of coefficients is ${\bf Z}$ if both $M_+$ and $M_-$ are orientable, 
and ${\bf Z}_2$ otherwise.)
\end{theorem}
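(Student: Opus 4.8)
The plan is to read off the ring $H^*(M;R)$ from the two disk-bundle halves and then show that its shape is numerically rigid. After thickening $M$ to a collar, write $S^{n+1}=B_+\cup B_-$, where $B_+\cap B_-$ deformation retracts to $M$ and each $B_k$ deformation retracts to $M_k$; here $B_k$ is the total space of a disk bundle over $M_k$ with fiber $D^{d_k}$, its boundary sphere bundle is $M$, and $\dim M=n$, $\dim M_k=n+1-d_k$ (this is exactly the structure furnished by the hypothesis, cf. Theorem \ref{thm:1-ball-bundles}). Since $\widetilde H^*(S^{n+1};R)$ is concentrated in degree $n+1$, the Mayer--Vietoris sequence for this decomposition shows that, away from a few top and bottom degrees, $H^k(M;R)\cong H^k(M_+;R)\oplus H^k(M_-;R)$, with the maps induced by the two bundle projections $\pi_\pm\colon M\to M_\pm$; in particular $\dim_R H^*(M;R)$ is controlled by $\dim_R H^*(M_\pm;R)$.

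Next I would bring in the Gysin (Thom) sequences of the two sphere bundles. For $S^{d_+-1}\hookrightarrow M\xrightarrow{\pi_+}M_+$ it reads
\[
\cdots\longrightarrow H^{k-d_+}(M_+;R)\xrightarrow{\,\cup\,e_+\,}H^{k}(M_+;R)\xrightarrow{\,\pi_+^*\,}H^{k}(M;R)\longrightarrow H^{k-d_++1}(M_+;R)\longrightarrow\cdots,
\]
with Euler class $e_+\in H^{d_+}(M_+;R)$, and symmetrically for $S^{d_--1}\hookrightarrow M\xrightarrow{\pi_-}M_-$ with $e_-\in H^{d_-}(M_-;R)$. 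Combining these with the Mayer--Vietoris constraint, and using that $\pi_\pm^*$ are ring homomorphisms, one extracts the graded-ring structure of $H^*(M;R)$: it is generated by two elements, it carries a power relation, and its Poincaré polynomial factors as a short ``spherical'' factor times a geometric-series factor with $\alpha$ terms, whence $\dim_R H^*(M;R)=2\alpha$ with $\alpha$ a positive integer. The same bookkeeping forces relations among $n$, $d_+$, $d_-$, and the degrees of the two ring generators, so that $H^*(M;R)$ is pinned down, as a Poincaré-duality algebra, by $\alpha$ and the two fiber dimensions alone.

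The final step is arithmetic. Among Poincaré-duality algebras over $R$ of this restricted form --- generated in two low degrees, carrying an $\alpha$-fold power relation, satisfying Poincaré duality in dimension $n$, and compatible with the Euler-class multiplications forced by both Gysin sequences --- only $\alpha=1,2,3,4,6$ occur. I would obtain this by writing out the Poincaré polynomial of $M$ explicitly, imposing all the constraints at once, and reducing to a polynomial identity in ${\bf Z}[t]$ whose solvability selects exactly the integers $\alpha$ with $\varphi(\alpha)\le 2$, i.e. $\alpha\in\{1,2,3,4,6\}$ --- the same crystallographic/cyclotomic restriction that underlies the dihedral-group picture of the focal points (cf. Remark \ref{rem:Coxeter-group}). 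Throughout, one works with $R={\bf Z}$ when both $M_+$ and $M_-$ are orientable, and with $R={\bf Z}_2$ otherwise, so that the Thom isomorphisms and Euler classes used in the Gysin sequences are available.

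The main obstacle is this last, arithmetic, step. The additive part --- running Mayer--Vietoris and the two Gysin sequences to conclude that $\dim_R H^*(M;R)$ is finite and even, say $2\alpha$ --- is comparatively routine; the real work is to harness the \emph{multiplicative} structure together with Poincaré duality to eliminate every value of $\alpha$ outside $\{1,2,3,4,6\}$, which is a delicate degree-bookkeeping argument and is where M\"{u}nzner's ingenuity lies. A secondary technical point is keeping the coefficient conventions consistent throughout, since the switch from ${\bf Z}$ to ${\bf Z}_2$ (depending on the orientability of $M_+$ and $M_-$) affects the validity of the Thom isomorphism and hence of the Gysin sequences; for the full computation I would follow M\"{u}nzner \cite{Mu2}.
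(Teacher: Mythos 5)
The paper does not actually prove this theorem: it states it and refers the reader to M\"{u}nzner \cite{Mu2}, so there is no in-text argument to compare against. Your outline does reproduce the architecture of M\"{u}nzner's proof --- the Mayer--Vietoris sequence for $S^{n+1}=B_+\cup B_-$ giving $H^k(M)\cong H^k(M_+)\oplus H^k(M_-)$ in middle degrees, the Thom/Gysin sequences of the two sphere bundles giving the periodic additive structure of $H^*(M_\pm)$ (this is essentially Theorem \ref{thm:1-cohomology} of the paper), and the observation that the decisive input is the multiplicative structure plus Poincar\'{e} duality. Up to that point the proposal is sound, including the care about coefficients ($\mathbf{Z}$ versus $\mathbf{Z}_2$ according to orientability of $M_\pm$, which governs the Thom isomorphisms).

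The genuine gap is that the final step --- the only step that actually yields $\alpha\in\{1,2,3,4,6\}$, i.e.\ the entire content of the theorem --- is not carried out but replaced by an assertion: that the constraints reduce to ``a polynomial identity in $\mathbf{Z}[t]$ whose solvability selects exactly the integers $\alpha$ with $\varphi(\alpha)\le 2$.'' No such identity is derived, and the appeal to the crystallographic/cyclotomic restriction is not a substitute for a derivation; indeed Remark \ref{rem:crystall} of the paper explicitly notes that a direct proof that $D_g$ is crystallographic \emph{would constitute a different (possibly simpler) proof} of this theorem, i.e.\ it is not how the existing proof goes. M\"{u}nzner's actual elimination of the other values of $\alpha$ proceeds by analyzing the cup products of the low-degree generators of $H^*(M_\pm;R)$ against the degree-$\mu$ generator ($\mu=m_++m_-$), where Poincar\'{e} duality and the compatibility of the two Gysin sequences force integrality and divisibility conditions (on binomial-type coefficients attached to powers of the generators) that fail for all $\alpha\notin\{1,2,3,4,6\}$; this case analysis is delicate and degree-by-degree, and nothing in your sketch performs it. As written, the proposal establishes (modulo routine verification) that $\dim_R H^*(M;R)=2\alpha$ for some positive integer $\alpha$, but not the restriction on $\alpha$, which is the theorem.
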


M\"{u}nzner then proved Theorem \ref{thm:1-cohomology} below regarding the cohomology of an isoparametric hypersurface and its
focal submanifolds.  Since all of the parallel hypersurfaces $M_t = V^{-1}(t)$ are diffeomorphic, it is sufficient
to consider the case $M = V^{-1}(0)$.  In that case, $M$ has two focal submanifolds $M_1 = V^{-1}(1)$
of dimension $n - m_1$ and $M_{-1} = V^{-1}(-1)$ of dimension $n - m_{-1}$, where $m_1$ and $m_{-1}$ are the
two (possibly equal) multiplicities of the principal curvatures of $M$.  Then by Theorem \ref{thm:1-ball-bundles},
the sets,
\begin{displaymath}
B_1 = \{q \in S^{n+1} \mid V(q) \geq 0 \}, \quad B_{-1} = \{q \in S^{n+1} \mid V(q) \leq 0 \},
\end{displaymath}
are $(m_k + 1)$-ball bundles over the focal submanifolds $M_k$, for $k = 1, -1$, respectively.

The dimension $n$ of $M$ is equal to $g(m_1 + m_{-1})/2$, and so $g = 2n/\mu$, where $\mu = m_1 + m_{-1}$.
Thus, an isoparametric hypersurface $M$ satisfies the hypothesis of the following theorem of M\"{u}nzner \cite{Mu2}.
The proof is done using algebraic topology, and we will omit it here.  We refer the reader to
M\"{u}nzner's original proof \cite{Mu2} (see also \cite[pp. 134--136]{CR8}).

\begin{theorem}
\label{thm:1-cohomology} 
Let $M$ be a compact, connected hypersurface in $S^{n+1}$ such that:
\begin{enumerate}
\item[${\rm(a)}$] $S^{n+1}$ is divided into two manifolds $(B_1,M)$ and $(B_{-1},M)$ with boundary along $M$.
\item[${\rm(b)}$]  For $k = \pm 1$, the manifold $B_k$ has the structure of a differentiable ball bundle over a
compact, connected manifold $M_k$ of dimension $n-m_k$.
\end{enumerate}
Let the ring $R$ of coefficients be ${\bf Z}$ if both $M_1$ and $M_{-1}$ are orientable, and ${\bf Z}_2$ otherwise.
Let $\mu = m_1 + m_{-1}$.  Then $\alpha = 2n/\mu$ is an integer, and for $k = \pm 1$,
\begin{displaymath}
H^q(M_k) = \left\{ \begin{array}{ll}
R & \mbox{for }q \equiv 0 \ (\bmod \  \mu),\ 0 \leq q <n,\\
R & \mbox{for } q \equiv m_{-k} \ (\bmod \ \mu),\ 0 \leq q <n,\\
0 & \mbox{otherwise}.
\end{array}
\right. 
\end{displaymath}
Further,
\begin{displaymath}
H^q(M) = \left\{ \begin{array}{l}
R \ \mbox{for }q = 0, n, \\
H^q(M_1) \oplus H^q (M_{-1}), \ \mbox{for }1 \leq q \leq n-1.
\end{array}
\right. 
\end{displaymath}
\end{theorem}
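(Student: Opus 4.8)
The plan is to read the cohomology off the two ways $S^{n+1}$ is assembled from disk bundles, using Mayer--Vietoris together with the Thom isomorphism, and then to play the two sphere--bundle structures on $M$ against each other to force the stated periodicity. Write $d_k=n-m_k=\dim M_k$, and let $\pi_k:M\to M_k$ be the sphere--bundle projection furnished by hypothesis (b), whose fiber is $S^{m_k}$. Over the coefficient ring $R$ of the statement each normal disk bundle $B_k\to M_k$ is $R$-orientable (automatic if $R={\bf Z}_2$; if $R={\bf Z}$ it follows from the assumed orientability of $M_1$ and $M_{-1}$ together with that of $S^{n+1}$), so we have the Thom isomorphism $H^q(B_k,M;R)\cong H^{q-m_k-1}(M_k;R)$, as well as Poincar\'e duality on the closed manifolds $M$ and $M_k$; all (co)homology below is with $R$ coefficients.

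First I would run the Mayer--Vietoris sequence of the decomposition $S^{n+1}=B_1\cup B_{-1}$, after a slight thickening so that $B_1\cap B_{-1}\simeq M$ and $B_k\simeq M_k$. Since $H^q(S^{n+1})=R$ for $q=0,n+1$ and vanishes otherwise, and since $m_k\geq1$ gives $H^q(M_k)=0$ for $q\geq n$, the sequence collapses, in the range $1\leq q\leq n-1$, to a short exact sequence exhibiting the restriction map
\[
H^q(M_1)\oplus H^q(M_{-1})\longrightarrow H^q(M)
\]
as an isomorphism, while $H^0(M)=H^n(M)=R$. This already yields the second displayed formula of the theorem, and it shows in passing that $\pi_1^{*}$ and $\pi_{-1}^{*}$ are injective on $H^q$ for $1\leq q\leq n-1$.

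Next I would feed this into the Gysin sequence of each sphere bundle $\pi_k:M\to M_k$, obtained from the long exact sequence of the pair $(B_k,M)$ via the Thom isomorphism:
\[
\cdots\to H^{q-m_k-1}(M_k)\xrightarrow{\ \cup\,e_k\ }H^q(M_k)\xrightarrow{\ \pi_k^{*}\ }H^q(M)\xrightarrow{\ \partial_k\ }H^{q-m_k}(M_k)\to\cdots.
\]
Injectivity of $\pi_k^{*}$ (in the middle range from the previous step, and at the two ends by a direct check) forces the Euler-class map $\cup\,e_k$ to vanish, so the sequence splits into short exact sequences and $H^q(M)\cong H^q(M_k)\oplus H^{q-m_k}(M_k)$. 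Comparing this for $k=1$ and $k=-1$ with the Mayer--Vietoris isomorphism and cancelling the common summand gives $H^q(M_{-1})\cong H^{q-m_1}(M_1)$ and $H^q(M_1)\cong H^{q-m_{-1}}(M_{-1})$ in the middle range; iterating yields the periodicity $H^q(M_k)\cong H^{q-\mu}(M_k)$ with $\mu=m_1+m_{-1}$. Since $M_k$ is connected, $H^0(M_k)=R$, and since $H^{m_{-k}}(M_k)\cong H^0(M_{-k})=R$, periodicity produces a copy of $R$ in every degree $\equiv0$ or $\equiv m_{-k}\pmod{\mu}$; a simultaneous induction on the two indices $k=\pm1$ shows there is nothing else within one period, and Poincar\'e duality forces $n-m_k$ to lie in one of those residue classes, so restricting to $0\leq q\leq n-m_k$ gives exactly the first displayed formula. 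Counting the nonzero groups then gives $\dim H^{*}(M_k)$, hence $\dim H^{*}(M)=2\alpha$ with $\alpha=2n/\mu$, which is an integer because $\dim M=g(m_1+m_{-1})/2$ as recorded before the statement.

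The main obstacle is the bookkeeping concealed in the phrases ``by a direct check'' and ``simultaneous induction'': one must handle the degrees $q=0$ and $q=n$, where Mayer--Vietoris does not directly supply the isomorphism; one must, in the $R={\bf Z}$ case, know the groups are torsion-free before cancelling summands; and one must rule out spurious classes inside a period. This is exactly where M\"{u}nzner's a priori bound $\alpha\in\{1,2,3,4,6\}$ from Theorem \ref{thm:1-dim-cohomology} (applicable since $M$ divides $S^{n+1}$ into the two ball bundles $B_1,B_{-1}$) and the full Poincar\'e--Lefschetz duality of the pairs $(B_k,M)$ enter, reducing the matter to a finite, rigid combinatorial matching. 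For this step I would follow M\"{u}nzner's original computation in \cite{Mu2}, or the exposition in \cite[pp.~134--136]{CR8}, rather than try to extract it from the exact sequences alone.
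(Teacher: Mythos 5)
The paper itself omits the proof of this theorem, deferring to M\"{u}nzner \cite{Mu2} and to \cite[pp.~134--136]{CR8}, and your Mayer--Vietoris plus Thom/Gysin scheme is precisely the argument carried out in those sources: the Mayer--Vietoris collapse giving the second display, the injectivity of $\pi_k^{*}$ forcing the Euler classes to vanish, and the resulting transfer isomorphisms between $H^{*}(M_1)$ and $H^{*}(M_{-1})$ are all correct and constitute the standard proof. In that sense your outline is the right one.

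Two steps need repair. First, your appeal to Theorem \ref{thm:1-dim-cohomology} to finish the ``combinatorial matching'' is both unnecessary and logically hazardous: in M\"{u}nzner's development the bound $\alpha\in\{1,2,3,4,6\}$ is extracted from the multiplicative structure of $H^{*}(M;R)$, which in turn rests on the additive computation you are proving here, so quoting it at this point risks circularity. It is also not needed. Over a field, the split Gysin sequences give $P_M(t)=P_{M_k}(t)(1+t^{m_k})$ and Mayer--Vietoris gives $P_M(t)=P_{M_1}(t)+P_{M_{-1}}(t)-1+t^{n}$, whence $t^{m_k}P_{M_k}(t)=P_{M_{-k}}(t)-1+t^{n}$ and, after eliminating $P_{M_{-k}}$, the identity $(1-t^{\mu})P_{M_k}(t)=(1+t^{m_{-k}})(1-t^{n})$. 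This determines $P_{M_k}$ uniquely as a rational function; the requirement that it be a polynomial forces $\mu\mid 2n$, and the resulting polynomial is exactly $\sum_j t^{j\mu}+\sum_j t^{m_{-k}+j\mu}$ over the stated ranges, so there is no room for spurious classes and no case analysis is required. Second, over $R={\bf Z}$ the exact sequences $0\to H^{q}(M_k)\to H^{q}(M)\to H^{q-m_k}(M_k)\to 0$ do not a priori split, so you cannot ``cancel the common summand'' until freeness is known; this is handled by induction on $q$: if $H^{j}(M_{\pm 1})$ is free for $j<q$, then $H^{q-m_k}(M_k)$ is free, the sequence splits, and comparing torsion subgroups in $H^{q}(M)\cong H^{q}(M_1)\oplus H^{q}(M_{-1})\cong H^{q}(M_k)\oplus H^{q-m_k}(M_k)$ kills the torsion of $H^{q}(M_{-k})$ for both choices of $k$. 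With those two repairs your sketch becomes the proof in the cited sources.
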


For a compact, connected isoparametric hypersurface $M \subset S^{n+1}$ with $g$ distinct principal curvatures, we have
\begin{displaymath}
\dim M = n = g(m_1 + m_{-1})/2 = g\mu/2.
\end{displaymath} 
Thus, $\alpha = 2n/\mu = g$.  By Theorem \ref{thm:1-cohomology}, we see that 
$\alpha$ is also equal to $\dim_R H^*(M,R)/2$.  Hence by M\"{u}nzner's Theorem \ref{thm:1-dim-cohomology}, 
the number $g = \alpha$
can only assume the values $1,2,3,4$ or 6, and we have M\"{u}nzner's \cite{Mu2} major theorem.
\begin{theorem}
\label{thm:1-number-prin-curv} 
Let $M \subset S^{n+1}$ be a connected isoparametric hypersurface with $g$ distinct principal curvatures.
Then $g$ is $1,2,3,4$ or 6. 
\end{theorem}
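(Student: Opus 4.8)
The plan is to assemble the structural and cohomological results proved earlier in Section \ref{munzner-work}, thereby reducing the statement to M\"{u}nzner's topological Theorem \ref{thm:1-dim-cohomology}. The proof is an assembly rather than a fresh computation, so the emphasis is on the bookkeeping that matches the two invariants involved.

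First I would reduce to the compact, connected case. By Theorem \ref{thm:1-connectedness} and Remark \ref{rem:compact-connected}, the given connected isoparametric hypersurface $M$ lies in a unique compact, connected isoparametric hypersurface, and since all the parallel hypersurfaces $V^{-1}(t)$, $-1 < t < 1$, are diffeomorphic and share the same value of $g$, there is no loss in taking $M = V^{-1}(0)$ for the Cartan-M\"{u}nzner polynomial $F$ of $M$, with restriction $V$ to $S^{n+1}$. Denote the two (possibly equal) multiplicities by $m_1$ and $m_{-1}$, and set $\mu = m_1 + m_{-1}$. Next, by Theorem \ref{thm:1-ball-bundles} the sphere $S^{n+1}$ is divided along $M$ into the two sets $B_k = \{q \in S^{n+1} \mid kV(q) \geq 0\}$, $k = \pm 1$, each of which carries the structure of a differentiable ball bundle over the focal submanifold $M_k$, of dimension $n - m_k$. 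Thus $M$ satisfies the hypotheses of both Theorem \ref{thm:1-dim-cohomology} and Theorem \ref{thm:1-cohomology}.

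The heart of the argument is then the identification $\alpha = g$, where $\alpha = (1/2)\dim H^*(M,R)$ and $R$ is $\mathbf{Z}$ if both focal submanifolds are orientable and $\mathbf{Z}_2$ otherwise. On the one hand, since $M$ has $g$ distinct principal curvatures with multiplicities satisfying $m_{i+2} = m_i$ (Corollary \ref{cor:1-multiplicities-isop-hyp}), a dimension count for the hypersurface $M$ gives $n = g(m_1 + m_{-1})/2 = g\mu/2$, so that $2n/\mu = g$. On the other hand, Theorem \ref{thm:1-cohomology} computes $H^*(M,R)$ explicitly in terms of $n$ and $\mu$, and in particular shows that $2n/\mu$ is an integer equal to $(1/2)\dim_R H^*(M,R) = \alpha$. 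Combining these, $g = \alpha$, and Theorem \ref{thm:1-dim-cohomology} forces $\alpha \in \{1,2,3,4,6\}$, which yields $g \in \{1,2,3,4,6\}$ and completes the proof.

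The genuine difficulty is concentrated entirely in the two results I am invoking as black boxes. Theorem \ref{thm:1-cohomology} is proved by feeding the two ball bundle structures into their Thom--Gysin sequences and gluing via the Mayer--Vietoris sequence for $S^{n+1} = B_1 \cup B_{-1}$; Theorem \ref{thm:1-dim-cohomology} then extracts the arithmetic constraint on $\alpha$ by analyzing how the resulting cohomology ring of $M$ must simultaneously be compatible with the bundle structure and with Poincar\'e duality on $M$ — it is this delicate ring-theoretic analysis that rules out all values of $\alpha$ except $1,2,3,4,6$. Since the excerpt permits us to assume these, the only point requiring care in the present proof is the clean derivation of $\alpha = g$ from $n = g\mu/2$; everything else is citation.
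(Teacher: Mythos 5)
Your proposal is correct and follows essentially the same route as the paper: reduce to the compact, connected case via Theorem \ref{thm:1-connectedness} and Remark \ref{rem:compact-connected}, use the ball-bundle decomposition of Theorem \ref{thm:1-ball-bundles} to invoke Theorem \ref{thm:1-cohomology}, identify $g = 2n/\mu = \alpha = (1/2)\dim_R H^*(M,R)$, and conclude with Theorem \ref{thm:1-dim-cohomology}. The bookkeeping $n = g\mu/2$ and the handling of the non-compact case match the paper exactly.
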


Note that we do not have to assume that $M$ is compact in the theorem, because any connected isoparametric 
hypersurface is contained in a unique compact, connected isoparametric hypersurface to which the arguments above can
be applied (see Remark \ref{rem:compact-connected}).  We also note that
there exist isoparametric hypersurfaces for each of the values of $g$ in the theorem, as mentioned in Subsection 
\ref{cartan-questions}.

\begin{remark}[Crystallographic groups]
\label{rem:crystall}

\noindent
{\rm A consequence of M\"{u}nzner's Theorem \ref{thm:1-number-prin-curv} is that the dihedral group
$D_g$ associated to $M$ (see Remark \ref{rem:Coxeter-group}) is crystallographic
(see L.C. Grove and C.T. Benson \cite[pp. 21--22]{Grove-Benson}). 
A direct proof that $D_g$ must be crystallographic could possibly give a simpler proof of Theorem \ref{thm:1-number-prin-curv} (see also K. Grove and S. Halperin \cite[pp. 437--438]{GH}).}
\end{remark}

\subsubsection{Multiplicities of the principal curvatures}
\label{multiplicities-prin-curv}

As discussed in Subsection \ref{cartan-case g=3},
Cartan \cite{Car3} classified isoparametric hypersurfaces with $g \leq 3$ principal curvatures.
 In the cases $g=4$ and $g=6$, many results concerning the possible
multiplicities of the principal curvatures
have been obtained from the topological situation described in Theorem \ref{thm:1-cohomology}, i.e.,
that a compact, connected isoparametric hypersurface $M$ in $S^{n+1}$ divides $S^{n+1}$ into two ball bundles
over its two focal submanifolds.  

In the case of $g=4$ principal curvatures, 
several mathematicians, including M\"{u}nzner \cite{Mu}--\cite{Mu2},
Abresch \cite{Ab}, Grove and Halperin \cite{GH}, Tang \cite{Tang}--\cite{Tang2} 
and Fang \cite{Fang3}--\cite{Fang1}, 
found restrictions on the multiplicities $(m_1,m_2)$. 
This series of results culminated with the paper of
Stolz \cite{Stolz}, who used techniques from homotopy theory to prove Theorem \ref{Stolz-isop} below.  
This theorem of Stolz was ultimately an important part in the proof of the classification of isoparametric hypersurfaces with $g=4$ principal curvatures, which will be discussed in Section \ref{chap:2}.

\begin{theorem} [Stolz]
\label{Stolz-isop}
The multiplicities $(m_1,m_2)$ of the principal curvatures of an isoparametric hypersurface  $M \subset S^{n+1}$
with $g=4$ principal curvatures
are the same as those in the examples due to Ferus, Karcher and M\"{u}nzner \cite{FKM}
or the two homogeneous examples with $(m_1,m_2)= (2,2)$ or $(m_1,m_2)=(4,5)$ that are not of FKM-type.
\end{theorem}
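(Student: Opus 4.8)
The plan is to reduce the statement to a topological question about the focal submanifolds of $M$ and then to invoke the homotopy-theoretic obstruction of Stolz \cite{Stolz}. Fix the labeling so that $m_1 \le m_2$, and set $\mu = m_1 + m_2$; by the dimension count preceding Theorem~\ref{thm:1-cohomology}, together with Corollary~\ref{cor:1-multiplicities-isop-hyp}, we have $n = \dim M = 2\mu$. First I would translate M\"{u}nzner's structure theorems into bundle data: by Theorem~\ref{thm:1-ball-bundles}, the compact connected isoparametric hypersurface $M$ bounds on its two sides ball bundles over the focal submanifolds $M_+$ and $M_-$, and restricting to the fiber boundaries exhibits $M$ \emph{simultaneously} as the total space of a sphere bundle $S^{m_1} \hookrightarrow M \to M_+$ and of a sphere bundle $S^{m_2} \hookrightarrow M \to M_-$. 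By Theorem~\ref{thm:1-cohomology}, the cohomology rings of $M$, $M_+$, $M_-$ (with coefficients ${\bf Z}$ or ${\bf Z}_2$ as appropriate) are completely determined by $(m_1,m_2)$. Thus the whole problem becomes the homotopy-theoretic one: for which pairs $(m_1,m_2)$ can a space with the prescribed cohomology of $M_+$ carry an $S^{m_1}$-bundle whose total space in turn admits an $S^{m_2}$-fibration over a space with the prescribed cohomology of $M_-$?

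Next I would import the restrictions already in the literature in order to isolate the one hard case. M\"{u}nzner's own estimates, successively sharpened by Abresch \cite{Ab}, Grove and Halperin \cite{GH}, Tang \cite{Tang}--\cite{Tang2}, and Fang \cite{Fang3}--\cite{Fang1}, force $(m_1, m_2)$ with $m_1 \le m_2$ to satisfy a collection of inequalities and $2$-adic divisibility constraints; away from a short list of small pairs, $m_2 + 1$ must be divisible by a specific power of $2$ depending on $m_1$. These elementary arguments, however, cannot exclude the sporadic pairs that meet all such numerical conditions yet are realized neither by the Clifford examples of Ferus, Karcher and M\"{u}nzner \cite{FKM} nor by the two homogeneous examples with $(m_1, m_2) = (2,2)$ or $(4,5)$.

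The decisive step --- which I expect to be by far the most difficult --- is Stolz's homotopy-theoretic obstruction. Here one uses the two linked sphere-bundle structures on $M$ to produce, via Hopf/James-type constructions applied to the Thom complexes of the associated disk bundles (which, in view of the cohomology computed above, behave like stunted projective-type spaces), a specific element of a stable homotopy group whose detecting invariants --- computed through the Atiyah--Hirzebruch spectral sequence, Adams operations, and the attendant $K$-theory $e$-invariant --- are explicit arithmetic functions of $m_1$ and $m_2$. The reducibility/coreducibility properties that this forces (of James-number type) then constrain $(m_1, m_2)$ to satisfy precisely the arithmetic conditions characterizing the multiplicity pairs realized by the FKM construction. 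The technical heart of \cite{Stolz}, and the main obstacle, is carrying out this identification faithfully and tracking the invariant through the relevant spectral sequences.

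Finally, I would assemble the pieces. The pairs surviving both the elementary restrictions and Stolz's obstruction are exactly the FKM multiplicity pairs together with $(2,2)$ and $(4,5)$; since all of these are realized --- by the FKM examples and by the two homogeneous non-FKM examples, respectively --- the list is exhaustive, and the theorem follows.
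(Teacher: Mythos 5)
The paper does not actually prove this theorem: it is quoted as a result of Stolz, with the proof deferred entirely to \cite{Stolz}, so there is no internal argument to compare yours against. What you have written is a roadmap of Stolz's proof rather than a proof. The reduction steps you assemble from the survey's own machinery are correct: for $g=4$ one has $n = 2\mu$ with $\mu = m_1+m_2$; Theorem~\ref{thm:1-ball-bundles} exhibits $M$ as the common boundary of two disk bundles, hence simultaneously as an $S^{m_1}$-bundle over $M_+$ and an $S^{m_2}$-bundle over $M_-$; and Theorem~\ref{thm:1-cohomology} pins down the cohomology rings in terms of $(m_1,m_2)$. Your description of the decisive step --- Thom complexes of the disk bundles behaving like stunted projective spaces, coreducibility/James-number obstructions detected by the $K$-theory $e$-invariant and Adams operations, tied to the solution of the vector field problem --- is a fair summary of the flavor of Stolz's argument.

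The genuine gap is that this decisive step is only named, not executed: everything that actually excludes the sporadic pairs is hidden inside the sentence beginning ``Here one uses the two linked sphere-bundle structures \ldots,'' and you acknowledge as much. Nothing you write would let a reader verify that the surviving pairs are exactly the FKM pairs together with $(2,2)$ and $(4,5)$; that identification is the entire content of \cite{Stolz}. Two smaller points: the relevant divisibility constraint in the literature is on $m_1+m_2+1$ (it must be a multiple of the dimension $\delta(m_1)$ of the irreducible Clifford module), not on $m_2+1$; and it is worth recording, as the paper does in Remark~\ref{rem:Stolz-Dupin}, that Stolz's argument needs only the ball-bundle decomposition (available for compact proper Dupin hypersurfaces by Thorbergsson), not the isoparametric condition itself --- your setup already uses only that input, so you could state the theorem in that generality for free.
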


\begin{remark}[Application to proper Dupin hypersurfaces]
\label{rem:Stolz-Dupin}
{\rm Stolz proved Theorem \ref{Stolz-isop} under more general assumption that $M$ is
a compact, connected proper Dupin
hypersurface with four principal curvatures embedded in $S^{n+1}$.  Such a result is possible because
Thorbergsson \cite{Th1} had shown earlier that
a compact, connected proper Dupin hypersurface $M \subset S^{n+1}$ separates 
$S^{n+1}$ into two ball bundles over the first focal submanifolds on either side of $M$,
as in the case of a compact, connected isoparametric hypersurface (see also
\cite [p. 143]{CR8}). }
\end{remark}

In the case of $g=6$ principal curvatures, M\"{u}nzner \cite{Mu2} showed that all of the principal curvatures
have the same multiplicity $m$, and Abresch \cite{Ab} showed that $m$ equals 1 or 2.
Thus we have:

\begin{theorem}
\label{Abresch}
For an isoparametric hypersurface with $g=6$ principal curvatures, all the principal curvatures have the same multiplicity $m$, and $m$ equals 1 or 2.
\end{theorem}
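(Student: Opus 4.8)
The statement combines two theorems: M\"{u}nzner's, that for $g=6$ all six principal curvatures share a common multiplicity $m$, and Abresch's, that this common value is $1$ or $2$. I would establish them in that order.

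For the first, start from Corollary~\ref{cor:1-multiplicities-isop-hyp}: since $g=6$ is even there are at most two distinct multiplicities $m_1,m_2$, and by Theorem~\ref{thm:1-prin-curv-isop-hyp} they satisfy $m_{i+2}=m_i$ (subscripts mod $6$); so the point is to show $m_1=m_2$. Cartan's formula gives nothing here — for $g=6$ with period-two multiplicities equation~\eqref{eq:1-car-id-2} is an identity — so the argument must be topological, and I would run it through M\"{u}nzner's cohomology computation, Theorem~\ref{thm:1-cohomology}. With $\mu=m_1+m_2$ and $n=g\mu/2=3\mu$, that theorem describes $H^{*}(M_{\pm 1})$ and $H^{*}(M)$ explicitly: every nonzero group is a single copy of the coefficient ring, placed in a rigid periodic pattern of degrees with period $\mu$. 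The plan is to promote this additive information to multiplicative information — the cohomology of each focal submanifold is concentrated in so few degrees that Poincar\'e duality on $M_{\pm1}$, together with the ring structure of $H^{*}(M)$ read off from the Gysin sequences of the two sphere bundles $M\to M_{\pm1}$ coming from the ball-bundle splitting of Theorem~\ref{thm:1-ball-bundles}, forces the nonzero degrees to lie in arithmetic progression with a step determined by $m_1$ and $m_2$; the constraint $n=3\mu$ that is special to $g=6$ is then consistent only if $m_1=m_2$. The subtlety is that for $g=4$ the same bookkeeping tolerates $m_1\neq m_2$ (as the FKM examples show), so the $g=6$ arithmetic is genuinely needed, and carrying out M\"{u}nzner's computation faithfully is the substance of this step.

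Granting $m_1=m_2=m$, we have $n=6m$ and each focal submanifold has dimension $5m$, and by the degree pattern above (with $\mu=2m$) $H^{*}(M_{+};{\bf Z}_2)$ is one-dimensional in each of the degrees $0,m,2m,3m,4m,5m$ and zero elsewhere. In fact it is the truncated polynomial ring ${\bf Z}_2[x]/(x^6)$ with $|x|=m$: this follows from the Gysin sequence of the $S^m$-bundle $M\to M_{+}$, whose Euler class lies in $H^{m+1}(M_{+})=0$, together with M\"{u}nzner's determination of the ring $H^{*}(M)$. Now apply Steenrod squares. Since $H^{m+i}(M_{+};{\bf Z}_2)=0$ for $0<i<m$, one has $Sq^{i}x=0$ for $0<i<m$, while $Sq^{m}x=x^2\neq 0$; if $m$ were not a power of $2$, then $Sq^m$ would decompose as a sum of composites of Steenrod squares of positive degree less than $m$, each of which annihilates $x$ by raising its degree into a vanishing group, whence $Sq^m x=0$ — a contradiction. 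Hence $m$ is a power of $2$.

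It remains to exclude $m=4,8,16,\dots$, and this last step is the crux; it is exactly where Abresch's work \cite{Ab} is indispensable. It cannot be done from the homotopy type of a single focal submanifold, since ${\bf HP}^5$ already realizes the ring ${\bf Z}[x]/(x^6)$ with $|x|=4$, so one must bring in the isoparametric geometry itself. My plan would be to use the fact that $M$ is simultaneously an $S^m$-sphere bundle over \emph{both} focal submanifolds $M_{+}$ and $M_{-}$, each carrying the same projective-plane-like cohomology, so that the characteristic classes of the two bundles are coupled on the common total space $M^{6m}$; combining this coupling with Poincar\'e duality on $M$ and the integrality of the resulting characteristic numbers should over-determine the situation once $m\geq 4$. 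Pinning down the precise numerical obstruction is Abresch's contribution, and I expect this to be the principal difficulty of the entire argument.
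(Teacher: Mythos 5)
The paper offers no proof of this theorem: it is stated as a pair of citations, attributing the equality of the multiplicities to M\"{u}nzner \cite{Mu2} and the bound $m\le 2$ to Abresch \cite{Ab}. Your proposal correctly reproduces that division of labor and correctly diagnoses where the difficulties lie, and the one step you actually carry out --- the Adem-relation argument that $Sq^m x=x^2\neq0$ forces $m$ to be a power of $2$ once $H^*(M_+;{\bf Z}_2)$ is known to be a truncated polynomial ring on a degree-$m$ generator --- is a sound and standard argument. But as a proof the proposal has two genuine gaps, both of which you flag yourself. First, the claim $m_1=m_2$ is only a plan: you say the periodic additive structure of Theorem \ref{thm:1-cohomology} must be ``promoted to multiplicative information'' via Poincar\'e duality and the Gysin sequences, and that ``carrying out M\"{u}nzner's computation faithfully is the substance of this step'' --- i.e., the step is not carried out. (Note also that even granting one-dimensionality of $H^{im}(M_+;{\bf Z}_2)$, Poincar\'e duality gives $y_i\cup y_{5-i}\neq0$ but not by itself $y_1^i=y_i$; establishing the truncated polynomial ring structure, which your Steenrod argument needs as input, is part of the same unproved computation.)

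Second and more seriously, the exclusion of $m=4$ (and, if one does not invoke the stronger secondary-operation results on truncated polynomial algebras, of $m=8,16,\dots$) is left entirely open: you correctly observe that ${\bf HP}^5$ realizes the candidate cohomology ring so no argument about a single focal submanifold can succeed, you sketch a hope that the double fibration over $M_+$ and $M_-$ over-determines the characteristic classes, and then you write that ``pinning down the precise numerical obstruction is Abresch's contribution.'' That numerical obstruction is the entire content of the second half of the theorem; without it the proposal proves at best that $m$ is a power of $2$. So the proposal is an accurate road map that matches the paper's attributions, but it is not a proof: the two decisive steps are deferred to \cite{Mu2} and \cite{Ab} rather than established.
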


 Later Grove and Halperin  \cite{GH} showed that this restriction on the multiplicities also holds for
compact, connected proper Dupin
hypersurfaces with $g=6$ principal curvatures embedded in $S^{n+1}$.

\section{Classification Results}
\label{chap:2}

Due to the work of many mathematicians, isoparametric hypersurfaces in $S^{n+1}$ have been completely classified, and we will briefly state the classification results now.  As noted above, M\"{u}nzner
proved that the number $g$ of distinct principal curvatures must be $1,2,3, 4$ or 6.
We now summarize the classification results for each value of $g$. Many of these have been discussed already.

If $g=1$,
then the isoparametric hypersurface $M$ is totally umbilic, and it must be an open subset of a great or small
sphere.  If $g=2$, Cartan \cite{Car2} showed that $M$ must be an open subset of a standard product of two spheres,
\begin{displaymath}
S^k(r) \times S^{n-k-1}(s) \subset S^n, \quad r^2+s^2=1.
\end{displaymath}

In the case $g=3$, Cartan \cite{Car3}
showed that all the principal curvatures must have the same multiplicity
$m=1,2,4$ or 8, and the isoparametric hypersurface must be an open subset of a tube of
constant radius 
over a standard embedding of a projective
plane ${\bf FP}^2$ into $S^{3m+1}$,
where ${\bf F}$ is the division algebra
${\bf R}$, ${\bf C}$, ${\bf H}$ (quaternions),
${\bf O}$ (Cayley numbers), for $m=1,2,4,8,$ respectively.  Thus, up to
congruence, there is only one such family for each value of $m$. (See Subsection \ref{cartan-case g=3}.)

In the case of an 
isoparametric hypersurface with $g=4$ four principal curvatures, 
M\"{u}nzner (Theorem \ref{thm:1-prin-curv-isop-hyp}) proved that
the principal curvatures can have at most two
distinct multiplicities $m_1,m_2$. 
Next Ferus, Karcher and M\"{u}nzner
\cite{FKM} 
used representations
of Clifford algebras to construct
for any positive integer
$m_1$ an infinite series of families of isoparametric hypersurfaces
with four principal curvatures having respective multiplicities
$(m_1,m_2)$, where $m_2$ is nondecreasing and
unbounded in each series.  These examples are now known as isoparametric hypersurfaces of {\it FKM-type}, and
they are also described in
 \cite[pp. 162--180]{CR8}.  
This construction of Ferus, Karcher and M\"{u}nzner was a generalization of an earlier construction due to Ozeki and Takeuchi \cite{OT}.

Stolz \cite{Stolz} (see Theorem \ref{Stolz-isop}) next proved that the multiplicities $(m_1,m_2)$
of the principal curvatures of an isoparametric
hypersurface with $g=4$ principal curvatures
must be the same as those of the hypersurfaces of
FKM-type, or else $(2,2)$ or $(4,5)$, which are the multiplicities for certain homogeneous examples
that are not of FKM-type  (see the classification 
of homogeneous isoparametric hypersurfaces of Takagi and Takahashi \cite {TT}).

Cecil, Chi and Jensen \cite{CCJ1} then
showed that if the multiplicities of an isoparametric hypersurface with four principal curvatures satisfy the condition
$m_2 \geq 2 m_1 - 1$, then
the hypersurface is of FKM-type.  (A different proof of this result, using isoparametric 
triple systems, was given later by Immervoll \cite{Im}.)

Taken together with known results of Takagi \cite{Takagi} for $m_1 = 1$,
and Ozeki and Takeuchi \cite{OT} for $m_1 = 2$, this result of Cecil, Chi and Jensen handled all
possible pairs of multiplicities except for four cases, the FKM pairs
$(3,4), (6,9)$ and $(7,8)$,and  the homogeneous pair $(4,5)$.

In a series of recent papers, Chi \cite{Chi}--\cite{Chi4} completed the classification of isoparametric hypersurfaces with four principal curvatures.
Specifically, Chi showed that in the cases of multiplicities
$(3,4)$, $(6,9)$ and $(7,8)$, the isoparametric hypersurface must be of FKM-type, and in the case $(4,5)$, it must be homogeneous. 

The final conclusion is that an isoparametric hypersurface with $g=4$ principal curvatures must either be of
FKM-type, or else a homogeneous isoparametric hypersurface with multiplicities $(2,2)$ or $(4,5)$.

In the case of an isoparametric hypersurface with $g=6$ principal curvatures, M\"{u}nzner \cite{Mu}--\cite{Mu2} showed
that all of the principal curvatures must have the same multiplicity $m$, and
Abresch \cite{Ab} showed that $m$ must equal 1 or 2.
By the classification of homogeneous isoparametric hypersurfaces due to Takagi and Takahashi \cite{TT},
there is up to congruence only one homogeneous family in each case, $m=1$ or $m=2$.  

These 
homogeneous examples have been shown to be
the only isoparametric hypersurfaces in the case $g=6$ by Dorfmeister and Neher \cite{DN5}
in the case of multiplicity $m=1$, and by Miyaoka \cite{Mi11}--\cite{Mi12} in the case $m=2$
(see also the papers of Siffert \cite{Siffert1}--\cite{Siffert2}).

For general surveys on isoparametric hypersurfaces in spheres,
see the papers of Thorbergsson \cite{Th6}, Cecil \cite{Cec9}, and Chi \cite{Chi-survey}.

\begin{remark}
\label{rem-isop-complex-space-forms-2}
{\rm Although, as noted in Remark \ref{rem-isop-complex-space-forms}, isoparametric hypersurfaces in the complex space forms are related to those in the real case (sphere and anti-de Sitter space), classification questions present further difficulties.  First, the number $g$ of distinct principal curvatures need not be constant.  Second, all hypersurfaces have a distinguished unit vector field $W = -J \xi$, where $\xi$ is the unit normal.  $W$ is called the structure vector field (or sometimes, the Hopf vector field).  If $W$ is a principal vector then $M$ is a Hopf hypersurface.  Typically, however, $W$ may have nonzero components in more than one eigenspace of the shape operator $A$.  The number $h$ of such components also figures into the analysis of isoparametric hypersurfaces and, like $g$, need not be constant.

Although a description of the classification results is beyond the scope of this paper, we can say the following:

\begin{theorem}
For a hypersurface $M$ in ${\bf CP}^n$, the following are equivalent
\begin{enumerate}
\item
$M$ is isoparametric and Hopf;
\item
$M$ is Hopf and has constant principal curvatures;
\item
$M$ is isoparametric and has constant principal curvatures;
\item
$M$ is an open subset of a homogeneous hypersurface (see Takagi's list \cite[p. 350]{CR8}).
\end{enumerate} 
\end{theorem}

\begin{theorem}
Let $M$ be an isoparametric hypersurface in ${\bf CP}^n$.  For each point $p \in M$, we have $h(p) \in \{1,3,5\}$ and $g(p) \in \{2,3,4,5,7\}$.
\end{theorem}

\begin{theorem}
Let $M$ be an isoparametric hypersurface in ${\bf CH}^n$.  For each point $p \in M$, we have $h(p) \in \{1,2,3\}$ and $g(p) \in \{2,3,4,5\}$.
\end{theorem} }
\end{remark}

\noindent Thomas E. Cecil

\noindent Department of Mathematics and Computer Science

\noindent College of the Holy Cross

\noindent Worcester, Massachusetts 01610, U.S.A.

\noindent email: tcecil@holycross.edu\\

\noindent Patrick J. Ryan

\noindent Department of Mathematics and Statistics

\noindent McMaster University

\noindent Hamilton, Ontario, Canada L8S4K1

\noindent email: ryanpj@mcmaster.ca

\end{document}